\patchcmd{\section}{\scshape}{\bfseries}{}{}
\patchcmd{\@settitle}{\uppercasenonmath\@title}{}{}{}
\patchcmd{\@setauthors}{\MakeUppercase}{}{}{}
\renewcommand{\@secnumfont}{\bfseries} 
\newtheorem{theorem}{Theorem}[section]
\newtheorem{proposition}[theorem]{Proposition}
\newtheorem{lemma}[theorem]{Lemma}
\newtheorem{corollary}[theorem]{Corollary}
\theoremstyle{definition}
\newtheorem{definition}[theorem]{Definition}
\newtheorem{remark}[theorem]{Remark}
\newtheorem{problem}[theorem]{Problem}
\newcommand{\im}{\textup{im}}
\renewcommand{\ge}{\geqslant}
\renewcommand{\le}{\leqslant}
\newcommand{\lhdeq}{\trianglelefteqslant}    
\newcommand{\coloneq}{\vcentcolon=}      
\newcommand{\bX}{{\mathbf {X}}}
\newcommand{\bO}{{\mathbf {O}}}
\newcommand{\bU}{{\mathbf {U}}}
\newcommand{\bSp}{{\mathbf {Sp}}}
\newcommand{\cW}{{\mathcal {W}}}
\newcommand{\ABinom}[2]{\genfrac{\langle}{\rangle}{0pt}{}{#1}{#2}_{\kern -2pt q}}
\newcommand{\binomq}[2]{{\binom{#1}{#2}}_{\kern-3pt q}}
\newcommand{\binomqq}[2]{{\binom{#1}{#2}}_{\kern-3pt -q}}
\newcommand{\R}{\mathbb{R}}
\newcommand{\F}{{\mathbb F}}
\newcommand{\Z}{{\mathbb Z}}
\newcommand{\OmX}{\Omega{\mathrm {X}}}
\newcommand{\GX}{{\mathrm {GX}}}
\newcommand{\GL}{{\mathrm {GL}}}
\newcommand{\SL}{{\mathrm {SL}}}
\newcommand{\Sp}{{\mathrm {Sp}}}
\newcommand{\GU}{{\mathrm {GU}}}
\newcommand{\SU}{{\mathrm {SU}}}
\newcommand{\GO}{{\mathrm {GO}}}
\newcommand{\rhosting}{\rho_{{\rm duo}}^{\rm stingray}}
\newcommand{\eps}{\varepsilon}
\newcommand\cC{\mathcal{C}}
\newcommand\cU{\mathcal{U}}
\newcommand\cX{\mathcal{X}}
\newcommand{\Binom}[2]{{\genfrac{[}{]}{0pt}{}{#1}{#2}}}
\newcommand{\oPerpSymbol}{\begin{tikzpicture}[scale=0.134]  
  \draw (0,-0.5)--(0,1); \draw (-0.866,-0.5)--(0.866,-0.5);
  \draw (0,0) circle [radius=1];\end{tikzpicture}}
\newcommand{\oPerp}{\mathbin{\raisebox{-1pt}{\oPerpSymbol}}}
\title[Random generation of direct sums of finite non-degenerate subspaces]{\Large Random generation of direct sums of\\ finite non-degenerate subspaces}
\author{S.\,P. Glasby}
\author{Alice C.~Niemeyer}
\author{Cheryl E. Praeger}
\address[Alice C.~Niemeyer]{
Algebra and Representation Theory, RWTH Aachen University,
Pontdriesch 10-16, 52062 Aachen, Germany.
Email: {\tt alice.niemeyer@mathb.rwth-aachen.de}}
\address[S.\,P. Glasby and Cheryl E. Praeger]{Centre for the Mathematics of Symmetry and Computation, University of Western Australia, 35 Stirling Highway, Perth 6009, Australia. E-mail: {\tt\texttt{stephen.glasby@uwa.edu.au, cheryl.praeger@uwa.edu.au}}}
\thanks{\hskip-4mm Acknowledgements: The authors gratefully acknowledge support
  from the Australian Research Council (ARC) Discovery Project DP190100450.
  ACN acknowledges that this is a contribution to
  Project-ID 286237555 - TRR 195 - by the Deutsche Forschungsgemeinschaft
  (DFG, German Research Foundation). We thank the referee for suggesting shorter proofs of Lemmas 3.7(a), 3.8(a) and 3.11.\newline
  2010 Math Subject Classification: 20F65, 20D05, 05-08, 20D06, 68W20.\hfill
  Date: \today.}
\begin{document}

\begin{abstract}
Let $V$ be a $d$-dimensional vector space over a finite field $\F$
equipped with a non-degenerate hermitian, alternating, or quadratic form.
Suppose $|\F|=q^2$ if $V$ is hermitian, and $|\F|=q$ otherwise.
Given integers $e, e'$ such that $e+e'\le d$, we estimate the
proportion of pairs $(U, U')$, where $U$ is  a non-degenerate
$e$-subspace of $V$ and $U'$ is a non-degenerate $e'$-subspace  of $V$,
such that $U\cap U'=0$ and $U\oplus U'$ is non-degenerate (the sum
$U\oplus U'$ is direct and usually not perpendicular). The 
proportion is shown to be positive and at least $1-c/q>0$ for some
constant $c$.
For example, $c=7/4$ suffices in both the unitary and
symplectic cases.
The arguments in the orthogonal case are delicate and assume
that $\dim(U)$ and $\dim(U')$ are even, an assumption relevant for
an algorithmic application (which we discuss) for recognising finite
classical groups.
We also describe how recognising a classical groups $G$ relies on
a connection between certain pairs $(U,U')$ of non-degenerate subspaces
and certain pairs $(g,g')\in G^2$ of group elements
where $U=\textup{im}(g-1)$ and $U'=\textup{im}(g'-1)$.
\end{abstract}


\maketitle
\vskip-8mm
{\SMALL\textsc{Dedication:}  To the memory of Joachim Neub\"user, a pioneer in
Computational Group Theory who envisioned \phantom{|}\hskip 21mm the free software system {\sc GAP}.
}\newline
\phantom{|}\hskip0mm
{\SMALL \textsc{Keywords:} non-degenerate, direct sum, finite classical group, proportion}

\section{Introduction}

Let $V=\F^d$ be a vector space over a finite field $\F$ endowed with
a non-degenerate symplectic, unitary or quadratic form. Given
positive integers $e, e'$ such that $e+e'\le d$, for which non-degenerate
subspaces of dimensions $e,e'$ exist, we show that a
constant proportion of ordered pairs of non-degenerate subspaces of
dimensions $e, e'$, respectively, span a
non-degenerate $(e+e')$-subspace of $V$. Moreover, we prove that the
proportion approaches $1$ when~$|\F|$ approaches infinity.


\begin{theorem}\label{t:main1}
  Let $V=\F^d$ be a vector space over a finite field $\,\F$,
  as in Table~$\ref{tab1}$, equipped with
  a non-degenerate symplectic, unitary or quadratic form. Let
  $e, e'$ be positive integers such that $e+e'\le d$, and let $c$
  be a constant, with the type of form, $q$, $e,e'$ and $c$ as in
  Table~$\ref{tab1}$.  Then the proportion of pairs $(U, U')$ of
  non-degenerate subspaces of dimensions $e, e'$ respectively
  $($\kern-1pt of fixed but arbitrary type $\sigma,\sigma'\in\{-,+\}$
  in the orthogonal case$)$ that
  span a non-degenerate $(e+e')$-subspace $($\kern-1pt of arbitrary type in the orthogonal case$)$ is at least $1-c/q>0$.
\end{theorem}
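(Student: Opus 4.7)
The plan is to use transitivity of the isometry group of $V$ on non-degenerate $e$-subspaces of type $\sigma$ (Witt's extension theorem): fixing one such $U$, the proportion in the theorem equals the proportion of non-degenerate $e'$-subspaces $U'$ of type $\sigma'$ satisfying $U\cap U'=0$ and $U\oplus U'$ non-degenerate. Writing $V = U\perp U^\perp$ and letting $\pi:V\to U^\perp$ be perpendicular projection, the key geometric reduction is that for $U'$ with $U\cap U'=0$ one has $U+U' = U\perp \pi(U')$, since any $u'\in U'$ decomposes as $(u'-\pi(u'))+\pi(u')$ with $u'-\pi(u')\in U$. Consequently $U\oplus U'$ is non-degenerate of dimension $e+e'$ if and only if $W:=\pi(U')$ is a non-degenerate $e'$-subspace of $U^\perp$.

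Next I would parametrise the $U'$ with $U\cap U'=0$ and $\pi(U')=W$ by linear maps $\phi:W\to U$, setting $U'=U_\phi:=\{w+\phi(w):w\in W\}$, giving $|\F|^{ee'}$ choices of $\phi$ for each $W$. A short computation using $W\subseteq U^\perp$ shows that the form on $U_\phi$, pulled back via $w\mapsto w+\phi(w)$, equals $\beta_W+\phi^*\beta_U$ on $W$. Hence $U_\phi$ is non-degenerate of type $\sigma'$ iff this perturbation has the prescribed type and is non-degenerate. Counting good $U'$ thus factors as a double sum over non-degenerate $W\subseteq U^\perp$ and admissible $\phi:W\to U$, and the desired proportion becomes a ratio of explicit expressions in the standard $q$-binomial counts of non-degenerate subspaces in $V$ and $U^\perp$.

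In the symplectic and unitary cases every non-degenerate $e'$-subspace has a unique type, so the estimate reduces to bounding the proportion of $\phi$ for which $\beta_W+\phi^*\beta_U$ is degenerate; this is a single polynomial vanishing condition on the matrix entries of $\phi$ and contributes a shortfall of order $1/q$. The parallel analysis of $U'$ with $U\cap U'\neq 0$ shows that fraction is also $O(1/q)$. Combining these with the main-term ratio between the number of non-degenerate $e'$-subspaces of $U^\perp$ and of $V$, and tracking constants carefully through the $q$-binomial estimates, should yield the bound $1-c/q$ with $c=7/4$.

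I expect the principal obstacle to be the orthogonal case. Both $U'$ and $W$ now carry a sign $\pm$, and the type of $U_\phi$ is controlled by the discriminant of $\beta_W+\phi^*\beta_U$, a non-trivial polynomial in $\phi$. The strategy is to split the enumeration according to the types of $U$, $W$ and $U_\phi$, exploit the hypothesis that $e$ and $e'$ are even (which guarantees that non-degenerate subspaces of both signs exist regularly in these dimensions and suppresses the pathological low-dimensional cases), and count for each combination the $\phi$ producing it via a discriminant and Witt-index analysis. Combined with sign-weighted enumeration formulas for orthogonal geometries and careful $q$-binomial bounds, this should deliver the explicit constants $c$ recorded in Table~\ref{tab1}.
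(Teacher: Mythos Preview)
Your perpendicular-projection setup is correct: writing $V=U\oPerp U^\perp$ and $W=\pi(U')$, one has $U+U'=U\oPerp W$, so $U\oplus U'$ is non-degenerate if and only if $W$ is a non-degenerate $e'$-subspace of $U^\perp$, and the complements of $U$ in $U\oPerp W$ are exactly the graphs $U_\phi$ for $\phi\in\mathrm{Hom}(W,U)$. But notice what this reduction actually gives you. Fixing $U$ and then conditioning on $W=\pi(U')$ is the same as conditioning on the span $W_0:=U+U'=U\oPerp W$, since $W$ and $W_0$ determine each other. So your double sum over $(W,\phi)$ is the paper's partition of duos by their span $W_0\in\Binom{V}{e+e'}^\bX$, just written in different coordinates. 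Both routes land on the same subproblem: for a fixed non-degenerate $(e+e')$-space, count the non-degenerate complements of $U$ of prescribed type $\sigma'$. That is precisely the $d=e+e'$ case, which the paper does \emph{not} prove here---it is imported wholesale from~\cite{Forms}*{Theorem~1.1}. The paper's contribution is the ratio estimate connecting $\rho(\bX,V,\cU,\cU')$ to $\rho(\bX,W_0,\cW,\cW')$ via the $\omega$-function bounds in Section~\ref{sec:nt}.

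The gap in your proposal is therefore not the geometry but the quantitative endgame. You propose to bound the degenerate-$\phi$ locus by ``a single polynomial vanishing condition'' and, in the orthogonal case, by a ``discriminant and Witt-index analysis''. A Schwartz--Zippel argument on $\det(\beta_W+\phi^*\beta_U)$ gives only $O(e'/q)$ with no control of the constant, and extracting the specific values $c=1.72$, $1.75$, $2.85$, $3.125$ from it is not realistic; in the orthogonal case you must additionally separate the two sign classes of $U_\phi$, which a single discriminant condition does not do. This is exactly the work carried out in~\cite{Forms}, and it is substantial. Your outline also omits the second half of the paper's argument: even granting the $d=e+e'$ bound, one still has to show that the ratio
\[
\frac{|\Binom{V}{e+e'}^\bX|\cdot|\Binom{W_0}{e}^\bX|\cdot|\Binom{W_0}{e'}^\bX|}{|\Binom{V}{e}^\bX|\cdot|\Binom{V}{e'}^\bX|}
\]
(in your language, $\sum_{W}|\F|^{ee'}/|\Binom{V}{e'}^\bX|$ restricted to good $\phi$) is close to $1$, and in the orthogonal case to control the $\kappa$-factors in the sum over $\tau\in\{+,-\}$ (Lemma~\ref{lem:kappabound}). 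None of this appears in your plan. In short: your reduction is the paper's reduction in disguise, but the two pieces that actually produce the constants---the citation of~\cite{Forms} and the $\omega$/$\kappa$ estimates of Sections~\ref{sec:nt}--\ref{sec:proofs}---are missing.
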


\begin{center}
\begin{table}[!ht]\label{tab1}
\caption{The form, the field, $q$, and the constant $c$ for Theorem~\ref{t:main1}.}
\begin{tabular}{rllllll}  \hline
$\bX$&&Case	&Form  & $|\F|$ & $c$ & Conditions \\ \hline
$\bU$&&unitary &hermitian & $q^2$ &  $1.72$ & $q\ge2$, $e,e'\ge1$ \\ 
$\bSp$&&symplectic &alternating & $q$ & $1.75$ &  $q\ge2$, $e,e'\ge2$ even \\ 
$\bO^\eps$&&orthogonal & quadratic & $q$   &$3.125$ &  $q\ge4$, $e,e'\ge2$ even\\
&&			&&&$2.85$& $q=3$, $e,e'\ge2$ even\\ \hline
\end{tabular} 
\end{table}
\end{center}

The condition in the orthogonal case that $e,e'$ are both even
not only simplifies
our proof, but it is precisely the assumption that we require
for an algorithmic application for recognising finite classical groups, see
Section~\ref{sub:motiv}.
In the orthogonal case our methods are not strong enough to prove
the result with a sufficiently small value of the constant $c$ when $|\F|=2$.
Extensive computer experimentation suggests that the result also holds
in the case and when $e,e'$ may not both be even, and also when $q=2$.

\begin{problem}
Show that Theorem~$\ref{t:main1}$ holds also in the orthogonal case with $|\F|=2$ (for some constant $c<2$). 
\end{problem}

Our strategy for proving Theorem~\ref{t:main1} is described in
Section~\ref{s:setup}. It allows us to reduce to the case $d = e + e'$,
where, after some delicate analysis, we can apply~\cite{Forms}*{Theorem~1.1}
(which proves Theorem~\ref{t:main1} in the case $d = e + e'$).
We paraphrase this result in the symplectic and unitary cases
in Theorem~\ref{T:Forms} and in the orthogonal case in~Theorem~\ref{T:O}.
When the summands of a direct sum are perpendicular we use
the symbol $\oPerp$ instead of~$\oplus$.

\subsection{Algorithmic motivation}\label{sub:motiv}
Motivation  for  proving  Theorem~\ref{t:main1},  for  us,  came  from
computational group theory, where such non-degenerate subspaces $U, U'$
are  constructed   as  part  of  various   randomised  algorithms  for
recognising finite classical groups. To justify these algorithms it is
necessary to find  a lower bound for the probability  that $U, U'$ are
disjoint and span a non-degenerate  subspace, and this is precisely
what   Theorem~\ref{t:main1}  does.    Thus  our   main  interest   in
Theorem~\ref{t:main1} was  to justify  new algorithms which  we are
developing  for  classical  groups  over finite  fields  of  arbitrary
characteristic~\cite{GLNP}.

Moreover, in the course of our  research we discovered that justifying a
related  probability  bound  in   the  analysis  of  the  constructive
recognition algorithm in \cite{DLLO13} was  overlooked in the cases of
unitary, symplectic  and orthogonal groups in  even characteristic, as
the proof was given only for groups containing $\SL(V)$. In
Subsection~\ref{s:error}  we show  how to  use Theorem~\ref{t:main1}  to
complete the  analysis of  the algorithm  in \cite{DLLO13},  namely to
complete the proof of \cite{DLLO13}*{Lemma 5.8} and hence of the
crucial result~\cite{DLLO13}*{Lemma 5.8}.

Our new algorithms, and also the algorithms in \cite{DLLO13}, work
with a finite classical group $G$ with natural module $V$.  A major
step is to construct a subgroup which is itself a classical group
acting on a much smaller subspace.  The basic strategy is to find two
random elements $g$ and $g'$ in $G$ with the following property: $g$
preserves a decomposition $V= U_g\oPerp F_g$ where $g$ is
irreducible on $U_g=\textup{im}(g-1)$, $F_g=\ker(g-1)$, and
similarly for $g'$. It is convenient to write $U=U_g, F=F_g, U'=U_{g'}$
and $F'=F_{g'}$.  In the case of unitary, symplectic and orthogonal
groups (which we will informally refer to as the \emph{classical
case}), the subspaces $U, U'$ and $F,F'$ are non-degenerate.  The
challenge in all cases is to prove that with high probability the subspaces
$U,U'$ are disjoint, $U+U'$ is non-degenerate, and the subgroup
$\langle g, g' \rangle$ induces a classical group on $U+U'$. This problem can
be subdivided into three smaller problems. The first problem is
addressed by the results of this paper, namely to estimate the
probability that the subspaces are disjoint and span a
non-degenerate subspace.  Second, is the link between the proportion of
pairs of elements $(g, g')$ and the proportion of pairs of suitable
subspaces $(U,U')$.  We comment on this in Subsection~\ref{subsec:stingelts}. The
third problem, given an $(e+e')$-subspace $W$, which is non-degenerate
in the classical case, is to estimate the probability that a pair $(g,g')$
in the relevant classical group on $W$ corresponds to a suitable
pair $(U, U')$ spanning $W$ such that the subgroup $\langle g, g'\rangle$
induces a
classical group on $W$. Resolving this last problem requires deep
theory relying on the finite simple group classification. In the
special case where $e=e'$ this problem has been solved in
\cite{PSY2015}, and we plan to tackle the general case in
collaboration also with L\"ubeck in \cite{GLNP}.

  The strategy for proving Theorem~\ref{t:main1}, the
  notation $\Binom{V}{e}$ and $\Binom{V}{e}^\eps_\sigma$, and the links
  with~\cite{DLLO13}, are described
  in Section~\ref{sec:dllo}. The algorithmic applications are discussed in more detail in Section~\ref{sec:algo}. Formulas for
  the number of non-degenerate $e$-subspaces are given in
  Section~\ref{sec:nondegsub} and certain rational functions in~$q$ such as
  $\omega(d,q)=\prod_{i=1}^d(1-q^{-i})$
  are bounded in Section~\ref{sec:nt}.
  Finally, the proof of Theorem~\ref{t:main1} is detailed
  in Section~\ref{sec:proofs} for the symplectic,
  unitary and orthogonal cases.

\section{Strategy for proving Theorem~\ref{t:main1} and links with \texorpdfstring{\cite{DLLO13}}{}}\label{sec:dllo}

We introduce the notation we will use throughout the paper in
Subsection~\ref{s:notation}, and we explain in
our strategy for proving
Theorem~\ref{t:main1}, which allows us to build on the work in~\cite{Forms},
in Subsection~\ref{s:setup}. More details are given in
Subsection~\ref{s:arrogant} on the algorithmic application discussed in
Subsection~\ref{sub:motiv}, in particular we prove~\cite{DLLO13}*{Lemma~5.8}
for the classical groups.

\subsection{Notation and hypotheses}\label{s:notation}
Let $V=\F^d$ be a $d$-dimensional classical space.

(a) Suppose that $V$ admits a non-degenerate form of type $\bX$ and
$\F$ is a finite field of prime power order $q$ or $q^2$ as in
Table~\ref{tab1}. In particular, if $\bX=\bO^\eps$ then either $d$ is
even with $q>2$ and $\eps\in\{-,+\}$, or $dq$ is odd and $\eps=\circ$,
so the polar form of the quadratic form on $V$ is a non-degenerate
symmetric form. In the orthogonal case we sometimes simply write
$\bX=\bO$. In all cases $V$ is equipped with a non-degenerate (bi- or
sesqui-)linear form.

(b) Let $G=G^\bX$ be a group of isometries of $V$ satisfying
$\OmX_d(q) \lhdeq G \le  \GX_d(q)$. Hence
\begin{enumerate}
\item[(i)] if $\bX=\bSp$ then  $\Sp_d(q) \lhdeq G \le  \textup{GSp}_d(q)$;
\item[(ii)] if $\bX=\bU$ then  $\SU_d(q) \lhdeq G \le  \GU_d(q)$ where
  $\GU_d(q)$ is a subgroup of $\GL_d(q^2)$;
\item[(iii)] if $\bX=\bO^\eps$ then $\Omega^\eps_d(q) \lhdeq G \le \GO^\eps_d(q)$
  with $\eps\in\{-,\circ,+\}$.
\end{enumerate}

(c) An  $e$-dimensional subspace of $V$ is called an
\emph{$e$-subspace} and the set of all of non-degenerate $e$-subspaces
is denoted by $\Binom{V}{e}^\bX$. Therefore $V=U\oPerp U^\perp$ for
each $U\in\Binom{V}{e}^\bX$. If $\bX=\bSp$ or $\bU$ then the subgroup
$\OmX_d(q)$ (and hence $G^\bX$) is transitive on $\Binom{V}{e}^\bX$
(whenever the set is non-empty). On the other hand, if $\bX=\bO^\eps$
then we sometimes write $\Binom{V}{e}^\bX$ as $\Binom{V}{e}^\bO$ or
$\Binom{V}{e}^\eps$. Now $G=G^\bX$ has at most two orbits on
$\Binom{V}{e}^\eps$. As $e$ is even, there are exactly two $G$-orbits
for each $G$ satisfying $\Omega_d^\eps(q)\lhdeq G\le \GO_d^\eps(q)$,
namely the set ${\Binom{V}{e}}^\eps_\sigma$ of $e$-subspaces of type
$\sigma$, where $\sigma\in\{-,+\}$. Thus $\Binom{V}{e}^\eps =
{\Binom{V}{e}}^\eps_+\dot\cup{\Binom{V}{e}}^\eps_-$. Furthermore, if
$U\in\Binom{V}{e}^\eps_\sigma$, then since $V=U\oPerp U^\perp$, it
follows from~\cite{KL}*{Proposition~2.5.11(ii)} that $U^\perp$ has
type~$\eps\sigma$ (also when $d$ is odd, in which case we interpret
$\circ\sigma$ as $\circ$).


(d) Let $e, e'$ be positive integers such that $e+e'\le d$, as in
Table~\ref{tab1}. Now $G^\bX$ acts on $\Binom{V}{e}^\bX$ and $\Binom{V}{e'}^\bX$.
Let $\cU\subseteq\Binom{V}{e}^\bX$ and $\cU'\subseteq\Binom{V}{e'}^\bX$ be
$G^\bX$-orbits. As discussed in (c), if  $\bX=\bSp$ or~$\bU$, then we must
have $\cU=\Binom{V}{e}^\bX$ and $\cU'=\Binom{V}{e'}^\bX$, since these sets
are $G^\bX$-orbits. On the other hand if $\bX=\bO^\eps$, then
$\cU=\Binom{V}{e}^\eps_\sigma$ and $\cU'=\Binom{V}{e'}^\eps_{\sigma'}$ are proper
subsets of $\Binom{V}{e}^\eps$ and $\Binom{V}{e'}^\eps$ where each $U\in\cU$
and $U'\in\cU'$ has subspace type $\sigma,\sigma'\in\{-,+\}$, respectively.
%
We call a pair of subspaces $(U,U')\in\cU\times \cU'$ an
\[
\textup{
\emph{$\bX$-duo} (or a \emph{subspace-duo}) if $U\cap U'=0$ and $U\oplus U'$ is
non-degenerate.}
\]
Hence $\dim(U\oplus U')=e+e'$ and $U\oplus U'\in \Binom{V}{e+e'}^\bX$.  We
use `subspace-duo' instead of `$\bX$-duo' only when the value of $\bX$
is unambiguous.  The proportion we need to estimate in
Theorem~\ref{t:main1} is therefore
\begin{equation}\label{d:rho}
\rho(\bX, V, \cU, \cU') \coloneq \frac{ |\{\, \mbox{$\bX$-duos in\ } \cU\times\cU' \}|}{|\cU|\cdot|\cU'|}
\end{equation}
for appropriate $G^\bX$-orbits $\cU$ and $\cU'$. 

\subsection{Strategy for the Proof of Theorem~\ref{t:main1}}\label{s:setup}

Here we explain our strategy for proving Theorem~\ref{t:main1}.  Let
$V=\F^d$ and $G=G^\bX$ be as in Subsection~\ref{s:notation} of type
$\bX$, and let $\cU, \cU'$ be $G$-orbits of subspaces such that
$\cU\subseteq \Binom{V}{e}^\bX$ and $\cU'\subseteq \Binom{V}{e'}^\bX$,
as in Subsection~\ref{s:notation}(d), so that to prove
Theorem~\ref{t:main1} we need to find a lower bound of the form $c/|\F|$ for the
proportion $\rho(\bX, V, \cU, \cU')$ of $\bX$-duos in $\cU\times\cU'$.

If $d=\dim(V)=e+e'$ then an $\bX$-duo spans $V$, and the proportion
$\rho(\bX, V, \cU, \cU')$ is estimated in~\cite{Forms}*{Theorem 1.1}
where the lower bound $c/|\F|$ with $c$ as in \cite{Forms}*{Table~\ref{tab1}}
is established.
Assume henceforth that $d>e+e'$.  In Proposition~\ref{p:reduce} we
give a strategy for reducing the general case to the case of dimension
$e+e'$.

Clearly the set of $\bX$-duos in $\cU\times\cU'$ is $G$-invariant and,
moreover, the group $G$ acts transitively on $\Binom{V}{e+e'}^\bX$ if
$\bX$ is $\bU$ or $\bSp$, while if $\bX=\bO^\eps$, then $G$ has two
orbits on $\Binom{V}{e+e'}^\eps$, namely ${\Binom{V}{e+e'}}^\eps_+$
and ${\Binom{V}{e+e'}}^\eps_-$.
For a given $W\in \Binom{V}{e+e'}^\bX$, the number of $\bX$-duos $(U,
U')$ in $\cU\times\cU'$ such that $W=U\oplus U'$ depends only on the
$G$-orbit containing $W$.  Moreover if $(U, U')$ is an $\bX$-duo in
$\cU\times\cU'$ which spans $W$, then $U\in \Binom{W}{e}^\bX$ and
$U'\in \Binom{W}{e'}^\bX$ so that $(U, U')$ is an $\bX$-duo in
$\Binom{W}{e}^\bX\times \Binom{W}{e'}^\bX$. This provides a critical
link between the proportion $\rho(\bX, V, \cU, \cU')$ we need to
estimate for Theorem~\ref{t:main1} and the proportion $\rho(\bX, W,
\cW, \cW')$ for the smaller space $W$ and $G_W$-orbits $\cW, \cW'$ in
$\Binom{W}{e}^\bX, \Binom{W}{e'}^\bX$, respectively which as we mentioned,
is estimated in~\cite{Forms}*{Theorem 1.1}.

\begin{remark}
  Suppose $V=W$ is a non-degenerate 4-dimensional orthogonal space of
  type~$\tau\in\{-,+\}$, and $V=U\oplus U'$ is a direct sum of
  two non-degenerate 2-subspaces. If $U'$ has type~$\sigma'$ then, 
  perhaps surprisingly, we can say nothing about the type~$\sigma$ of
  a non-degenerate complement~$U$: 
  the type $\sigma$ of $U$ can be~$+$ or~$-$, and
  for large~$q$ each possibility occurs about half the time!
  This does not contradict
  \cite{KL}*{Proposition~2.5.11(ii)} because the sum need not be perpendicular.
  We stress in the above strategy, that there is \emph{no correlation} between
  the subspace type of~$W$ and the subspace types of the orbits $\cW$
  and~$\cW'$ (which are determined by the subspace types $\sigma, \sigma'$
  of $U\in\cU$ and $U'\in\cU'$, respectively).
  Thus in Proposition~\ref{p:reduce}(b) below,  $\tau,\sigma,\sigma'$ are
  independent of each other.
\end{remark}

\begin{proposition}\label{p:reduce}
  Suppose that  $V, G=G^\bX$ are as in Subsection~$\ref{s:notation}$
  of type $\bX$, that $d, e, e'$ are as in Theorem~$\ref{t:main1}$ with
  $d>e+e'$, and that $\cU, \cU'$ are $G$-orbits of subspaces in
  $\Binom{V}{e}^\bX, \Binom{V}{e'}^\bX$, respectively.
  \begin{enumerate}[{\rm (a)}]
  \item If  $\bX$ is $\bU$ or $\bSp$, and $W\in \Binom{V}{e+e'}^\bX$,
    then $\cU=\Binom{V}{e}^\bX,\ \cU'=\Binom{V}{e'}^\bX$, and
  \begin{equation*}
  \rho(\bX, V, \cU, \cU')
  = \frac{|\Binom{V}{e+e'}^\bX|\cdot |\Binom{W}{e}^\bX|\cdot |\Binom{W}{e'}^\bX|}
         {|\Binom{V}{e}^\bX|\cdot |\Binom{V}{e'}^\bX|}
         \cdot \rho\left(\bX, W, \Binom{W}{e}^\bX,\Binom{W}{e'}^\bX\right).
  \end{equation*}

  \item If  $\bX$ is $\bO^\eps$ for some $\eps\in\{-,\circ,+\}$,
  and $\cU= \Binom{V}{e}^\eps_\sigma$, $\cU'= \Binom{V}{e'}^\eps_{\sigma'}$,
  where $e,e'$ are even and $\sigma, \sigma'\in\{-,+\}$, then, choosing
  $W_\tau\in {\Binom{V}{e+e'}}^\eps_\tau$ for $\tau\in\{-,+\}$, we have
  \begin{equation*}
  \rho(\bO^\eps, V, \cU, \cU')=
  \sum_{\tau\in\{-,+\}} \frac{|{\Binom{V}{e+e'}}^\eps_\tau|\cdot
  |{\Binom{W_\tau}{e}}^\tau_\sigma|\cdot |{\Binom{W_\tau}{e'}}^\tau_{\sigma'}|}
    {|{\Binom{V}{e}}^\eps_\sigma|\cdot
      |{\Binom{V}{e'}}^\eps_{\sigma'}|}\cdot
    \rho\left(\bO^\tau, W_\tau, \Binom{W_\tau}{e}^\tau_\sigma, \Binom{W_\tau}{e'}^\tau_{\sigma'}\right).
  \end{equation*}
  \end{enumerate}
\end{proposition}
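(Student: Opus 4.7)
The plan is a direct double-counting (orbit-counting) argument on the set of pairs $(W,(U,U'))$ where $W\in\Binom{V}{e+e'}^\bX$ and $(U,U')\in\cU\times\cU'$ is an $\bX$-duo with $U\oplus U'=W$. Each $\bX$-duo $(U,U')$ uniquely determines its span $W=U\oplus U'\in\Binom{V}{e+e'}^\bX$, so counting these pairs first by duo and then by $W$ yields a formula relating $\rho(\bX,V,\cU,\cU')$ to proportions on subspaces of dimension $e+e'$, where~\cite{Forms}*{Theorem~1.1} applies.

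The key observation used throughout is that non-degeneracy and (orthogonal) subspace type are \emph{intrinsic} to the restriction of the form to a subspace, and therefore do not depend on the ambient space. In particular, for any fixed $W\in\Binom{V}{e+e'}^\bX$, the $\bX$-duos $(U,U')\in\cU\times\cU'$ with $U\oplus U'=W$ are in bijection with the $\bX$-duos in $(\cU\cap\Binom{W}{e}^\bX)\times(\cU'\cap\Binom{W}{e'}^\bX)$ regarded inside $W$.

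For part~(a), the group $G$ acts transitively on $\Binom{V}{e+e'}^\bX$ and $\cU=\Binom{V}{e}^\bX$, $\cU'=\Binom{V}{e'}^\bX$, so the number $N$ of $\bX$-duos in $\cU\times\cU'$ spanning a given $W$ is independent of the choice of $W\in\Binom{V}{e+e'}^\bX$, and equals $|\Binom{W}{e}^\bX|\cdot|\Binom{W}{e'}^\bX|\cdot\rho(\bX,W,\Binom{W}{e}^\bX,\Binom{W}{e'}^\bX)$ by the definition~(\ref{d:rho}) of $\rho$. Summing $N$ over $\Binom{V}{e+e'}^\bX$ and dividing by $|\cU|\cdot|\cU'|=|\Binom{V}{e}^\bX|\cdot|\Binom{V}{e'}^\bX|$ gives formula~(a). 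For part~(b), $\Binom{V}{e+e'}^\eps$ splits as the two $G$-orbits ${\Binom{V}{e+e'}}^\eps_\tau$, $\tau\in\{-,+\}$, so I would partition the $\bX$-duos in $\cU\times\cU'$ according to the type $\tau$ of their span $W_\tau$. The crucial use of the intrinsic-type observation here is that for $W_\tau\in{\Binom{V}{e+e'}}^\eps_\tau$ and a non-degenerate $e$-subspace $U\subseteq W_\tau$, the subspace type of $U$ as a subspace of $W_\tau$ (i.e.\ membership in ${\Binom{W_\tau}{e}}^\tau_\sigma$) coincides with its subspace type as a subspace of $V$ (i.e.\ membership in ${\Binom{V}{e}}^\eps_\sigma$). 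Hence $\cU\cap\Binom{W_\tau}{e}^\bO={\Binom{W_\tau}{e}}^\tau_\sigma$ and similarly for $\cU'$, and the argument of~(a) applied within each orbit of type $\tau$ produces the corresponding summand in~(b).

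The conceptual subtlety — emphasised by the preceding Remark — is that there is no forced relation between $\tau$ and $(\sigma,\sigma')$; in particular both $\tau=+$ and $\tau=-$ can contribute nonzero terms, and one must resist the temptation to tie the type of $W_\tau$ to the types of $U,U'$. Once this independence is acknowledged, the proof reduces to a clean orbit-counting argument and a substitution of the definition of $\rho$, with no further calculation required.
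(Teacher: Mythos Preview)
Your proposal is correct and follows essentially the same approach as the paper's proof: both count $\bX$-duos by partitioning them according to the $G$-orbit of their span $W=U\oplus U'$, use transitivity to see that the count per $W$ depends only on its orbit, and then identify that count with the number of $\bX$-duos inside $W$ via the definition of $\rho$. Your explicit articulation of the ``intrinsic type'' point (that membership of $U$ in $\Binom{V}{e}^\eps_\sigma$ is equivalent to membership in $\Binom{W_\tau}{e}^\tau_\sigma$) is exactly what the paper uses when it asserts the bijection between $\bO^\eps$-duos in $\cU\times\cU'$ spanning $W_\tau$ and $\bO^\tau$-duos in $\Binom{W_\tau}{e}^\tau_\sigma\times\Binom{W_\tau}{e'}^\tau_{\sigma'}$, though the paper states this more tersely.
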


\begin{proof}
(a) Suppose that $\bX$ is $\bU$ or $\bSp$. As discussed above,
  $\cU=\Binom{V}{e}^\bX,\ \cU'=\Binom{V}{e'}^\bX$, and $G$ is
  transitive on $\Binom{V}{e+e'}^\bX$. Thus each $W\in
  \Binom{V}{e+e'}^\bX$ is spanned by the same number of $\bX$-duos in
  $\cU\times \cU'$, and this number is equal to the number of
  $\bX$-duos in $\Binom{W}{e}^\bX\times\Binom{W}{e'}^\bX$. Thus, for a
  chosen $W\in \Binom{V}{e+e'}^\bX$,
\begin{align*}
\rho(\bX, V, \cU, \cU') &= \frac{|\Binom{V}{e+e'}^\bX|\cdot |\{\,
  \mbox{$\bX$-duos in\ }
  \Binom{W}{e}^\bX\times\Binom{W}{e'}^\bX\}|}{|\Binom{V}{e}^\bX|\cdot
  |\Binom{V}{e'}^\bX|}\\ &= \frac{|\Binom{V}{e+e'}^\bX|\cdot
  |\Binom{W}{e}^\bX|\cdot |\Binom{W}{e'}^\bX|}{|\Binom{V}{e}^\bX|\cdot
  |\Binom{V}{e'}^\bX|}\cdot \rho\left(\bX, W, \Binom{W}{e}^\bX,
\Binom{W}{e'}^\bX\right).
\end{align*}

(b) Now suppose that $\bX$ is $\bO^\eps$ and $\cU=
\Binom{V}{e}^\eps_\sigma$, $\cU'= \Binom{V}{e'}^\eps_{\sigma'}$, for
some $\sigma, \sigma'\in\{-,+\}$. The $G$-orbits on
${\Binom{V}{e+e'}}^\eps$ are ${\Binom{V}{e+e'}}^\eps_+$ and
${\Binom{V}{e+e'}}^\eps_-$, and the number of $\bX$-duos in $\cU\times
\cU'$ spanning a subspace $W\in {\Binom{V}{e+e'}}^\eps$ depends only
on the $G$-orbit containing $W$. Moreover, if $W$ has type
$\tau\in\{-,+\}$, then this number is equal to the number of
$\bX$-duos in
$\Binom{W}{e}^\tau_\sigma\times\Binom{W}{e'}^\tau_{\sigma'}$. Thus,
choosing $W_\tau\in {\Binom{V}{e+e'}}^\eps_\tau$ for $\tau\in\{-,+\}$,
each $\bO^\eps$-duo $(U,U')\in\cU\times\cU'$ such that $U\oplus U'=W_\tau$
is an $\bO^\tau$-duo in $\Binom{W}{e}^\tau_\sigma\times\Binom{W}{e'}^\tau_{\sigma'}$
where $W=W_\tau$,
and conversely each $\bO^\tau$-duo
$(U,U')\in\Binom{W}{e}^\tau_\sigma\times\Binom{W}{e'}^\tau_{\sigma'}$ satisfies
$(U,U')\in\Binom{V}{e}^\eps_\sigma\times\Binom{V}{e'}^\eps_{\sigma'}=\cU\times\cU'$.
Hence 
\begin{align*}
\rho(\bO^\eps, V, \cU, \cU')
	& =   \sum_{\tau\in\{-,+\}} \frac{|{\Binom{V}{e+e'}}^\eps_\tau|\cdot 
|\{\, \mbox{$\bO^\tau$-duos in\ }  \Binom{W_\tau}{e}^\tau_\sigma\times\Binom{W_\tau}{e'}^\tau_{\sigma'}\}|}{|{\Binom{V}{e}}^\eps_\sigma|\cdot |{\Binom{V}{e'}}^\eps_{\sigma'}|}\\
	&=  \sum_{\tau\in\{-,+\}} \frac{|{\Binom{V}{e+e'}}^\eps_\tau|\cdot |{\Binom{W_\tau}{e}}^\tau_\sigma|\cdot |{\Binom{W_\tau}{e'}}^\tau_{\sigma'}|}{|{\Binom{V}{e}}^\eps_\sigma|\cdot |{\Binom{V}{e'}}^\eps_{\sigma'}|}\cdot \rho\left(\bO^\tau, W_\tau, \Binom{W_\tau}{e}^\tau_\sigma, \Binom{W_\tau}{e'}^\tau_{\sigma'}\right).\qedhere
\end{align*}
\end{proof}

\subsection{Links with \texorpdfstring{\cite{DLLO13}}{}}\label{s:arrogant}

The  quantity $\rho(\bX,V,\cU, \cU')$ defined in~\eqref{d:rho}
is equal to the proportion
studied in Theorem~\ref{t:main1} for appropriate subspace families $\cU,\cU'$.
It is often  convenient to  count single  subspaces rather  than subspace
pairs, so we note the following easily proved property.

\begin{lemma}\label{l:dllo2}
  Let $\cU,\cU'$ be $G^\bX$-orbits of subspaces as in
  Proposition~$\ref{p:reduce}$, and let $U\in \cU$. Then 
  \[
    \rho(\bX,V,\cU, \cU')
    = \frac{|\{ U'\in\cU'\mid (U,U')\ \mbox{is an $\bX$-duo} \}|}{|\cU'|}.
  \]
\end{lemma}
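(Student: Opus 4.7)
The plan is to exploit the transitivity of $G^{\bX}$ on its orbit $\cU$ together with the observation that the property of being an $\bX$-duo is $G^{\bX}$-invariant. Set
\[
N(U)\coloneq |\{U'\in\cU'\mid (U,U')\text{ is an }\bX\text{-duo}\}|
\]
for each $U\in\cU$. The goal is to show that $N(U)$ is independent of the choice of $U\in\cU$, from which the lemma follows by a short counting step.

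First I would verify that $G^{\bX}$ preserves the $\bX$-duo property. Given $g\in G^{\bX}$ and $(U,U')\in\cU\times\cU'$, one has $gU\cap gU'=g(U\cap U')$ and $gU\oplus gU'=g(U\oplus U')$, and since $g$ is an isometry, $g(U\oplus U')$ is non-degenerate whenever $U\oplus U'$ is; thus $(U,U')$ is an $\bX$-duo if and only if $(gU,gU')$ is. Moreover $\cU$ and $\cU'$ are $G^{\bX}$-stable by assumption, so the map $U'\mapsto gU'$ is a bijection from the set counted by $N(U)$ onto the set counted by $N(gU)$. Hence $N(gU)=N(U)$ for all $g\in G^{\bX}$.

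Next I would invoke transitivity: because $\cU$ is a single $G^{\bX}$-orbit, the function $N(\cdot)$ is constant on $\cU$, say $N(U)=N$ for all $U\in\cU$. Counting $\bX$-duos in $\cU\times\cU'$ by summing over the first coordinate gives
\[
|\{\bX\text{-duos in }\cU\times\cU'\}|=\sum_{U\in\cU}N(U)=|\cU|\cdot N.
\]
Substituting into the definition~\eqref{d:rho} of $\rho(\bX,V,\cU,\cU')$ and cancelling $|\cU|$ yields
\[
\rho(\bX,V,\cU,\cU')=\frac{|\cU|\cdot N}{|\cU|\cdot|\cU'|}=\frac{N}{|\cU'|},
\]
which is exactly the right-hand side of the claimed identity for any chosen $U\in\cU$.

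There is really no substantive obstacle: the only point requiring care is the verification that $G^{\bX}$-translation genuinely sends $\bX$-duos to $\bX$-duos and preserves the orbits $\cU,\cU'$, which is immediate from the fact that $G^{\bX}$ consists of isometries and from the hypothesis on $\cU,\cU'$ in Proposition~\ref{p:reduce}. Once that is in place, the lemma is a one-line orbit-counting argument.
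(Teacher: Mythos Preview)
Your proof is correct and follows essentially the same approach as the paper: both use transitivity of $G^{\bX}$ on $\cU$ to conclude that the count $N(U)$ is independent of $U$, then sum over $\cU$ and cancel. Your version is slightly more detailed in explicitly verifying that isometries preserve the $\bX$-duo property, which the paper leaves implicit.
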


\begin{proof}
  Since $\cU$ is an orbit  under each $G^\bX$ satisfying
  $\OmX(V)\lhdeq G^\bX\le\GX(V)$, 
  the number, say $n$, of $U'\in\cU'$ such that $(U,U')$ is an $\bX$-duo
  (that is,  $U+U'$ is non-degenerate of dimension $e+e'$) is independent 
  of the choice of  $U\in\cU$. Hence  we conclude that
  $\displaystyle{\rho(\bX,V,\cU, \cU')
    = \frac{n\cdot |\cU|}{|\cU|\cdot|\cU'|}= \frac{n}{|\cU'|}.}$
\end{proof}

\section{Algorithmic applications of Theorem~\texorpdfstring{\ref{t:main1}}{}}\label{sec:algo}

In this section we describe how the main results of this paper will
be used in an algorithmic context for recognising classical groups.
Conceptually, we wish to construct classical groups of smaller dimension
in a given classical group by constructing a subspace duo $(U,U')$ from
a pair of elements $(g,g')$ which we call a `stingray duo', and which turns out to
generate a classical group on $U+U'$ with high probability.

\subsection{Completing the proof of \cite{DLLO13}*{Lemma 5.8}}\label{s:error}
First we present several results leading up to Lemma~\ref{l:ddlo4},
which deals with~\cite{DLLO13}*{Lemma 5.8}.
Our approach is more general as the two subspaces we treat may come
from different $G^\bX$-orbits. However we attempt, as far as possible,
to use the same notation as in \cite{DLLO13} for clarity.

For a subspace $W$ of $V$,
let $W^\perp=\{v\in V\mid \beta(v,W)=0\}$ where $\beta\colon V\times V\to\F$
is the sesquilinear form preserved by $V$. When a direct sum $U\oplus W$ is a
\emph{perpendicular direct sum}, that is when $U\cap W=0$ and
$U\subseteq W^\perp$, we write $U\oPerp W$ for emphasis.

\begin{lemma}\label{l:dllo1}
  Let $\cU,\cU'$ be $G^\bX$-orbits of subspaces as in
  Proposition~$\ref{p:reduce}$, let  $U\in \cU$, $U'\in\cU'$, and let $E=U^\perp\cap (U')^\perp$ and $W=U + U'$.
  Then
  \begin{enumerate}[{\rm (a)}]
  \item $E=W^\perp$;
  \item $\dim(E)=d-e-e'$ if and only if $W=U\oplus U'$;
  \item $W$ is non-degenerate of dimension $e+e'$ if and only if
    $V=E\oPerp (U\oplus U')$.
  \end{enumerate}
\end{lemma}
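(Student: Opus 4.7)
The plan is to use standard non-degenerate form identities, proving (a) first, using it for (b), and combining them for (c).

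For (a), I would argue by double inclusion. If $v \in W^\perp$, then $\beta(v,w)=0$ for every $w \in W = U+U'$; specializing to $w \in U$ and to $w \in U'$ gives $v \in U^\perp \cap (U')^\perp = E$. Conversely, any $v \in E$ annihilates both $U$ and $U'$ under $\beta$, so by bilinearity (or sesquilinearity) $v$ annihilates every $u+u' \in U+U' = W$, giving $v \in W^\perp$. So $E = W^\perp$.

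For (b), since $\beta$ is non-degenerate on $V$, I have $\dim(W^\perp) = d - \dim(W)$, and by (a), $\dim(E) = d - \dim(W)$. Since $\dim(W) = \dim(U+U') = e + e' - \dim(U\cap U')$, I get $\dim(E) = d - e - e' + \dim(U\cap U')$. Thus $\dim(E) = d-e-e'$ is equivalent to $U\cap U' = 0$, which is exactly the condition that the sum $U+U'$ is direct, i.e.\ $W = U \oplus U'$.

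For (c), I would use the standard fact that a subspace $W$ of a non-degenerate formed space is itself non-degenerate if and only if $W \cap W^\perp = 0$, which by (a) is equivalent to $W \cap E = 0$. Assuming $W$ is non-degenerate of dimension $e+e'$: part~(b) gives $W = U \oplus U'$, and $W \cap E = 0$ together with the dimension count $\dim(W) + \dim(E) = d$ forces $V = W + E$; since $E \subseteq W^\perp$ by (a), this sum is perpendicular, so $V = E \oPerp (U \oplus U')$. Conversely, if $V = E \oPerp (U \oplus U')$, then $W = U + U' = U \oplus U'$ has dimension $e+e'$, and $V = E \oplus W$ with $E \subseteq W^\perp$ forces $E = W^\perp$ by a dimension count, so $W \cap W^\perp = W \cap E = 0$ and $W$ is non-degenerate.

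There is no real obstacle here: all three parts are routine consequences of non-degeneracy of $\beta$ on $V$ and the dimension formula for a sum of subspaces. The only care required is in part (c), where one must note that $E \subseteq W^\perp$ automatically ensures the decomposition in question is perpendicular, rather than having to verify this separately.
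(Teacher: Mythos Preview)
Your proof is correct and follows essentially the same approach as the paper's: both use $E=(U+U')^\perp=W^\perp$ for (a), the dimension formula $\dim(E)=d-\dim(W)$ for (b), and the characterisation of non-degeneracy via $W\cap W^\perp=0$ for (c). Your version is simply more explicit (spelling out the double inclusion in (a) and the dimension counts in (c)), whereas the paper compresses (a) to a one-line identity and in the converse of (c) uses $E=W^\perp$ directly from (a) rather than re-deriving it.
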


\begin{proof}
  (a)~By definition $E= U^\perp\cap (U')^\perp = (U+U')^\perp = W^\perp$.

  (b)~By part~(a), $\dim(E) = d-\dim(W)$. Hence $\dim(E) = d-e-e'$
  if and only if $\dim(W)=e+e'$, which, in turn, is
  equivalent to $W=U\oplus U'$.

  (c)~If $W$ is non-degenerate of dimension $e+e'$, then $W\cap W^\perp=0$ and
  $W=U\oplus U'$. Hence by part~(a),   $V=E\oPerp (U\oplus U')$. Conversely
  if  $V=E\oPerp (U\oplus U')$ then $W=U\oplus U'$ has dimension $e+e'$
  and, by part~(a), $W\cap W^\perp = W\cap E=0$ holds, so $W$ is non-degenerate.
\end{proof}

Lemmas~\ref{l:dllo2} and~\ref{l:dllo1}(c) have the following
immediate corollary.

\begin{corollary}\label{c:dllo1}
  Let $\cU,\cU'$ be $G^\bX$-orbits of subspaces as in
  Proposition~$\ref{p:reduce}$, let  $U\in \cU$, $U'\in\cU'$,
  and let $E= U^\perp\cap (U')^\perp$,
  as in Lemma~$\ref{l:dllo1}$. Then
  \begin{align*}
  \rho(\bX,V,\cU, \cU')
  &= \frac{|\{(U, U')\in\cU\times\cU'\mid V
    =E\oPerp (U\oplus U') \}|}{|\cU|\cdot|\cU'|}\\
  &= \frac{|\{ U'\in\cU'\mid V=E\oPerp (U\oplus U') \}|}{|\cU'|}.
  \end{align*}
\end{corollary}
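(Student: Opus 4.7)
The plan is simply to rephrase the definition of $\rho(\bX,V,\cU,\cU')$ using the characterisation of $\bX$-duos supplied by Lemma~\ref{l:dllo1}(c). First I would recall from~\eqref{d:rho} that $\rho(\bX,V,\cU,\cU')$ equals the number of $\bX$-duos in $\cU\times\cU'$ divided by $|\cU|\cdot|\cU'|$. A pair $(U,U')\in\cU\times\cU'$ is by definition an $\bX$-duo precisely when $U\cap U'=0$ and $U+U'$ is a non-degenerate $(e+e')$-subspace; by Lemma~\ref{l:dllo1}(c) this last condition is equivalent to $V=E\oPerp(U\oplus U')$, where $E=U^\perp\cap(U')^\perp$. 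Substituting this equivalent condition into the numerator of~\eqref{d:rho} produces the first displayed equality.

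For the second equality I would invoke Lemma~\ref{l:dllo2}. Since $\cU$ is a $G^\bX$-orbit, that lemma shows that $\rho(\bX,V,\cU,\cU')=|\{U'\in\cU'\mid (U,U')\text{ is an }\bX\text{-duo}\}|/|\cU'|$ for any fixed $U\in\cU$; replacing the duo condition once more by its Lemma~\ref{l:dllo1}(c) equivalent then delivers the stated identity. There is no real obstacle: the corollary is a direct substitution of one equivalent characterisation into two already-established counting identities, and the statement itself flags this as an \emph{immediate} consequence.
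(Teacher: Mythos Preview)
Your proposal is correct and mirrors exactly the paper's intended argument: the text states that the corollary is an \emph{immediate} consequence of Lemmas~\ref{l:dllo2} and~\ref{l:dllo1}(c), and you have supplied precisely that substitution of the Lemma~\ref{l:dllo1}(c) characterisation into the definition~\eqref{d:rho} and into Lemma~\ref{l:dllo2}.
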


Lemma 5.8 of \cite{DLLO13} counts group elements rather than subspaces.
Further, our proof applies for all fields $\F$ in the symplectic and unitary
cases, and for $|\F|\ge3$ in the orthogonal case.
Our next result is a more general version of what is required for \cite{DLLO13}
because we do not assume that $\cU=\cU'$.

\begin{lemma}\label{l:dllo3}
  Let $\cU,\cU'$ be $G^\bX$-orbits of subspaces as in
  Proposition~$\ref{p:reduce}$, let  $U\in \cU$, $U'\in\cU'$,
  and for $h\in G^\bX$ let $E(h) = U^\perp\cap (U'h)^\perp$. Let
\[
\cX \coloneq\{ U'h\mid h\in G^\bX,\ \mbox{and}\ V=E(h)\oPerp (U\oplus (U'h))\},
\quad\mbox{and}\quad T\coloneq\{ h\in G^\bX\mid U'h \in\cX\}.
\]
Then $|T|/|G^\bX|=\rho(\bX,V,\cU, \cU')$.
\end{lemma}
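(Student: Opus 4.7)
The plan is to transport the subspace count from Corollary~\ref{c:dllo1} to a group-element count via the orbit map $h\mapsto U'h$. Since $\cU'$ is a $G^\bX$-orbit containing $U'$, this map is a surjection $G^\bX\to\cU'$, and the orbit--stabiliser theorem gives
\[
|G^\bX| = |\operatorname{Stab}_{G^\bX}(U')|\cdot |\cU'|,
\]
with every fibre of the map having cardinality $|\operatorname{Stab}_{G^\bX}(U')|$.

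Next I would rewrite $\cX$ purely as a set of subspaces. Note that the condition $V=E(h)\oPerp(U\oplus (U'h))$ depends on $h$ only through $U'h$, since $E(h)=U^\perp\cap (U'h)^\perp$ is determined by the subspace $U'h$. Therefore
\[
\cX = \bigl\{\,U''\in\cU' \,\bigm|\, V = (U^\perp\cap (U'')^\perp)\oPerp(U\oplus U'')\,\bigr\},
\]
which is precisely the set whose cardinality appears (divided by $|\cU'|$) in Corollary~\ref{c:dllo1}. By that corollary, $|\cX|/|\cU'|=\rho(\bX,V,\cU,\cU')$.

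Finally, $T$ is by definition the full preimage of $\cX$ under $h\mapsto U'h$, and since every fibre has size $|\operatorname{Stab}_{G^\bX}(U')|$, we obtain $|T|=|\cX|\cdot|\operatorname{Stab}_{G^\bX}(U')|$. Dividing by $|G^\bX|$ and using the orbit--stabiliser identity above yields
\[
\frac{|T|}{|G^\bX|} \;=\; \frac{|\cX|\cdot|\operatorname{Stab}_{G^\bX}(U')|}{|\operatorname{Stab}_{G^\bX}(U')|\cdot|\cU'|} \;=\; \frac{|\cX|}{|\cU'|} \;=\; \rho(\bX,V,\cU,\cU'),
\]
which is the desired identity. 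No step is really an obstacle; the only thing to check carefully is the observation that $E(h)$ depends only on $U'h$, so that $\cX$ can be described as a set of subspaces and matched with the expression in Corollary~\ref{c:dllo1}.
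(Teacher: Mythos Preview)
Your proof is correct and follows essentially the same route as the paper: both use the orbit--stabiliser theorem to count the fibres of the map $h\mapsto U'h$, deduce $|T|=|\cX|\cdot|G^\bX_{U'}|$, and then apply Corollary~\ref{c:dllo1} to identify $|\cX|/|\cU'|$ with $\rho(\bX,V,\cU,\cU')$. Your explicit remark that $E(h)$ depends only on $U'h$ makes the identification of $\cX$ with the subspace set in Corollary~\ref{c:dllo1} slightly more transparent than in the paper, but the argument is otherwise the same.
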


\begin{proof}
  Note that $U'h=U'h'$ if and only if $h'h^{-1}\in G_{U'}^\bX$. Thus each
  $U'h\in\cX$ occurs for exactly $|G_{U'}^\bX|$ distinct elements $h$ of $G^\bX$.
  Further, since $\cU'$ is a $G^\bX$-orbit, $|G_{U'}^\bX|=|G^\bX|/|\cU'|$. It follows
  that $|T|=|\cX|\cdot |G^\bX|/|\cU'|$, and hence 
  \[
  \frac{|T|}{|G^\bX|} = \frac{|\cX|}{|\cU'|} = \rho(\bX,V,\cU, \cU'),
  \] 
  where the last equality follows from Corollary~\ref{c:dllo1}.
\end{proof}

For the  algorithm in  \cite{DLLO13}, the  $G^\bX$-orbits $\cU,  \cU'$ are
identical,  and  we  simply draw  together  Theorem~\ref{t:main1}  and
Lemma~\ref{l:dllo3} for the case where  $e=e'$ and $\cU=\cU'$. We note
that the condition $V=E(h)\oPerp (U\oplus (U'h))$ in Lemma~\ref{l:dllo3}
implies   that  $\dim(E(h))=d-e-e'$. The following lemma proves
\cite{DLLO13}*{Lemma~5.8}  for all the classical groups as described
in Section~\ref{s:notation}(b). The bound $0.05$ below arises since the
smallest value of $1-c/|\F|$ for $c$ in Table~\ref{tab1}
arises for $c=2.85$ and $q=3$ by Theorem~\ref{T:orthobound}.

\begin{lemma}\label{l:ddlo4}
  Suppose that the hypotheses in {\rm Section~\ref{s:notation}(a,b,c)} hold
  and $\cU=\cU'$. Let $U\in\cU$ and 
\[
T = \{ h\in G^\bX \mid  V=E(h)\oPerp (U\oplus (Uh))\ \mbox{where}\ E(h) = U^\perp\cap (Uh)^\perp\}.
\]
Then  $|T|/|G^\bX|=\rho(\bX,V,\cU, \cU)$,
In particular, for all $\bX$ and $\cU$ we have $|T|/|G^\bX|>0.05$.
\end{lemma}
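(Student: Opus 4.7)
The proof plan splits naturally into two parts: the equality $|T|/|G^\bX|=\rho(\bX,V,\cU,\cU)$, and the numerical lower bound $|T|/|G^\bX|>0.05$. For the equality I would simply invoke Lemma~\ref{l:dllo3} with $\cU'=\cU$ and with the chosen representative taken to be $U'=U$. Specialising the definitions of $E(h)$, $\cX$ and $T$ in Lemma~\ref{l:dllo3} to this case recovers verbatim the set $T$ defined in the present statement, and Lemma~\ref{l:dllo3} then yields $|T|/|G^\bX| = \rho(\bX,V,\cU,\cU)$ with no further work.

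For the numerical bound my approach is to apply Theorem~\ref{t:main1} in each case and identify the infimum of the lower bound $1-c/q$ over all admissible parameter values as listed in Table~\ref{tab1}. Working through the four rows: in the unitary case ($q\ge2$), $\rho\ge 1-1.72/2 = 0.14$; in the symplectic case ($q\ge2$), $\rho\ge 1-1.75/2 = 0.125$; in the orthogonal case with $q\ge4$, $\rho\ge 1-3.125/4 = 0.21875$; and in the orthogonal case with $q=3$, $\rho\ge 1-2.85/3 = 0.05$. The minimum is attained in the last case, so among all $\bX$ and $\cU$ the worst proportion is the orthogonal one at $q=3$.

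The only subtle point is the \emph{strict} inequality $|T|/|G^\bX|>0.05$, since the table entry for $q=3$ orthogonal gives exactly $0.05$. I would handle this by appealing to the stronger intermediate estimate underlying Theorem~\ref{t:main1} in the orthogonal case (cited in the paragraph above the statement as Theorem~\ref{T:orthobound}): the constant $c=2.85$ is a convenient rounding of the actual sharper bound, so for $q=3$ the true proportion strictly exceeds $0.05$. This is the main, though very minor, obstacle; the remainder of the argument is a routine case check. The plan is therefore: first cite Lemma~\ref{l:dllo3} for the equality; then tabulate the four bounds from Theorem~\ref{t:main1}; and finally note that the orthogonal $q=3$ estimate is strict, yielding $|T|/|G^\bX|>0.05$ in all cases.
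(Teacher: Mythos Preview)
Your proposal is correct and follows essentially the same approach as the paper: the equality is obtained by specialising Lemma~\ref{l:dllo3} to $\cU'=\cU$ and $U'=U$, and the bound $0.05$ is the minimum of $1-c/q$ over the rows of Table~\ref{tab1}, attained in the orthogonal case at $q=3$, with strictness coming from the proof of Theorem~\ref{T:orthobound} (where the displayed inequality for $\rho(\bO^\eps,V,\cU,\cU')$ is strict). One small correction: the strictness at $q=3$ is not because $c=2.85$ is a rounding---indeed $1-2.85/3=0.05$ exactly---but because the chain of inequalities in the proof of Theorem~\ref{T:orthobound} is strict.
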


The relation between $|T|/|G^\bX|$ and $\rho(\bX,V,\cU, \cU)$
in Lemma~\ref{l:ddlo4} was not appreciated in~\cite{DLLO13} and hence
the authors did not foresee the difficult problem of finding a lower bound for
$\rho(\bX,V,\cU, \cU)$ in their proof
of the classical case in~\cite{DLLO13}*{Lemma~5.8}.

\begin{remark}
  (a) If we refine the bound in Lemma~\ref{l:ddlo4} to depend on $\bX$ and $|\F|$,
  we have $|T|/|G^\bX|\ge 1-c/|\F|$
  by Theorem~\ref{t:main1}  where $c$ is given in Table~\ref{tab1}.
  Thus very few random selections are needed in
  the algorithm in~\cite{DLLO13} for large~$q$.

  (b) Recall that our 
  methods are not strong enough to give a useful lower bound for orthogonal 
  groups over a field of order $q=2$, but we have bounds for all other cases. 
  The omission of this case is not an issue in correcting the proof of 
  \cite{DLLO13}*{Lemma~5.8} since in that paper the analysis is given for fields 
  of even size $q>4$, see \cite{DLLO13}*{Theorems 1.2 and 1.3, and Remark 1.5}. 
  In relation to the comment in \cite{DLLO13}*{Remark 1.5} about the
  restriction to $q>4$ being needed because it relies on results in
  \cite{PSY2015}, we note that the cases $q=3, 4$ are also covered in
  \cite{PSY2015}*{Theorem 2}, and moreover \cite{PSY2015}*{Theorems 5
    and 6} are valid for all field sizes, the only exception being
  orthogonal groups with $q=2$. Thus the only exclusion in the
  analyses in both \cite{DLLO13} and \cite{PSY2015} is for orthogonal
  groups with $q=2$. The results for very small fields given in
  \cite{PSY2015} were not in the preprint available to the authors of
  \cite{DLLO13} at the time of its publication. 
\end{remark}

\subsection{Stingray elements}\label{subsec:stingelts}


Let $G=G^\bX$ be a group as in Section~\ref{s:notation}(b).
In this subsection we study elements $g\in G$ for which the image
$\im(g-1)$ is non-degenerate. For such elements $g$, the $G$-conjugacy
class $\cC=g^G$ corresponds to the $G$-orbit $\cU=\{\im(g'-1)\mid
g'\in \cC\}$ of non-degenerate subspaces.  Moreover, for two such
$G$-conjugacy classes $\cC$ and $\cC'$, and corresponding $G$-orbits
$\cU$ and $\cU'$, we establish in Lemma~\ref{l:stingequalspaces} that
the proportion of $\bX$-duos in $\cU\times\cU'$ is equal to the
proportion of certain kinds of pairs in $\cC\times\cC'$ which we call
stingray-duos, see Definition~\ref{def:stingray}(d). This connection
is crucial for our algorithmic applications. We make the following definitions.

\begin{definition}\label{def:stingray}
  Assume that the hypotheses of Section~\ref{s:notation}(a,b) hold with $G=G^\bX$, and
  that $d\ge 2$. 
  \begin{enumerate}[{\rm (a)}]
  \item For $g\in G$ let $F_g\coloneq \ker(g-1)$ be the $1$-eigenspace
    (fixed-point space) of~$g$ in $V$ and let $U_g\coloneq\textup{im}(g-1)$.
    Note that $U_g$ and $F_g$ are $\langle g\rangle$-invariant,
      and $\dim(U_g)+\dim(F_g)=\dim(V)$.
    
  \item For a positive integer $e\le d$, an element $g\in G$ is called
    an \emph{$e$-stingray   element} if $\dim(U_g)=e$ and~$g$
    acts irreducibly on $U_g$.
      
  \item  For positive integers $e, e'$ such that $e, e'$ and
    $e + e' \le d$, we  say  that $(g,g')\in G\times G$  is  an
    \emph{$(e,e')$-stingray pair},  if $g$ is an $e$-stingray element and
    $g'$ is an $e'$-stingray element. 

  \item  An  $(e,e')$-stingray pair  $(g,g')$  is called an
  \emph{$(e,e')$-stingray duo}, if $e\ge e'$ and $(U_g,U_{g'})$ is an $\bX$-duo
  as in Section~\ref{s:notation}(d).
\end{enumerate}
\end{definition}

\begin{lemma}\label{lem:unique2}
  Let $G=G^\bX$ as in {\rm \,Section~\ref{s:notation}(b)}
  and let $g\in G$ be an $e$-stingray element with $U_g, F_g$ as in
  Definition~$\ref{def:stingray}$. Then
  \begin{enumerate}[{\rm (a)}]
  \item $0\ne v^g-v\in U_g$, for all $v\in V\backslash F_g$;
  \item if $Z$ is a $\langle g\rangle$-invariant
    submodule of $V$ then either $g$ acts trivially on $Z$ and $Z\le F_g$, or
     $g$ acts non-trivially on $Z$, $U_g\le Z$ and
    the restriction $g_{\mid Z}$ of $g$ to $Z$ is an $e$-stingray
    element of $\GL(Z)$;
  \item  $U_g$ is the unique  $\langle g\rangle$-invariant submodule
    of $V$ on which $g$ acts  non-trivially and irreducibly.
  \end{enumerate}
\end{lemma}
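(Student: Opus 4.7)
The plan is to prove (a), (b), (c) in turn, exploiting at every step the fact that $(g-1)$ commutes with $g$ (so images and kernels of powers of $(g-1)$ are automatically $\langle g\rangle$-invariant) together with the irreducibility of $g$ on $U_g$.

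Part~(a) is immediate from the definitions: $v^g - v = v(g-1)$ lies in $\im(g-1)=U_g$, and it is nonzero exactly when $v \notin \ker(g-1)=F_g$.

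For part~(b), let $Z$ be $\langle g\rangle$-invariant. If $g$ acts trivially on $Z$ then $Z \le \ker(g-1)=F_g$, which is the first alternative. Otherwise pick $v \in Z \setminus F_g$; by~(a), $v(g-1)$ is a nonzero vector of $U_g$, and it also lies in $Z$ because $Z$ is $g$-invariant. Hence $U_g \cap Z$ is a nonzero $\langle g\rangle$-submodule of $U_g$, and irreducibility forces $U_g \cap Z = U_g$, i.e.\ $U_g \le Z$. To verify that $g_{\mid Z}$ is an $e$-stingray element of $\GL(Z)$, I would observe that $\im(g_{\mid Z}-1) = (g-1)Z \subseteq U_g \cap Z = U_g$ by~(a), that this image is a nonzero $\langle g\rangle$-submodule of $U_g$ (nonzero precisely because $g$ is nontrivial on $Z$), and that it therefore coincides with $U_g$ by irreducibility; the irreducibility of the action of $g_{\mid Z}$ on $U_g$ is inherited directly from~$g$.

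Part~(c) then follows quickly from (b): if $Z$ is a $\langle g\rangle$-invariant submodule on which $g$ acts nontrivially and irreducibly, then~(b) yields $U_g \le Z$, and since $g \ne 1$ gives $U_g \ne 0$, the irreducibility of $Z$ forces $U_g = Z$. The argument is largely routine; the one point requiring a little care is verifying in~(b) that $\im(g_{\mid Z}-1)$ equals $U_g$ rather than merely being contained in it, which is exactly where the irreducibility hypothesis on $g|_{U_g}$ does its work.
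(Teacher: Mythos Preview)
Your proof is correct and follows essentially the same approach as the paper's: in each part you use that $(g-1)$ commutes with $g$ and invoke the irreducibility of $g$ on $U_g$ exactly where the paper does. Your arguments for~(a) and the stingray claim in~(b) are in fact slightly more direct than the paper's, which routes through decompositions $v=u+f$ with $u\in U_g$, $f\in F_g$ and $Z=U_g\oplus(F_g\cap Z)$, whereas you simply observe $v(g-1)\in\im(g-1)=U_g$ and that $(g-1)Z$ is a nonzero $\langle g\rangle$-submodule of $U_g$.
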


\begin{proof}
  (a)~Let $v\in V\backslash F_g$. Then there exist
  $u\in U_g$ and $f\in F_g$ such that $v=u+f$ and $u\neq 0$. 
    The result follows since $v^g-v = u^g-u \in U_g$, and $v^g-v = 0$
    if and only if $v \in F_g$.

  (b)~Now  suppose that  $Z$ is a $\langle  g\rangle$-invariant
  submodule of  $V$. If $g$ acts trivially on $Z$ then clearly
  $Z \le F_g.$ Suppose  $g$ acts non-trivially on $Z$. Then there
  exists some $v\in Z$ with $v^g\ne v$ and so $v^g-v\in U_g$ is nonzero.
  Since $Z$ is $\langle g\rangle$-invariant, and $\langle g\rangle$
  is irreducible on $U_g$, it follows that $U_g\le Z$.
  Further $Z=U_g\oplus (F_g\cap Z)$ and $g_{\mid Z}$ is an $e$-stingray element
  of $\GL(Z)$ as claimed.

  (c)~Let $W$ be a $\langle g\rangle$-invariant submodule
    on which $g$ acts irreducibly and non-trivially. 
	Since $g$ acts  non-trivially on $W$ it follows from part (b) that 
	$U_g\le W$. Then, since $g$ acts irreducibly on $W$, this implies that 
	$U_g=W$.      
\end{proof}

\begin{lemma}\label{l:prop1}
  Let $G=G^\bX$ be a  group as in {\rm \,Section~\ref{s:notation}(b)} of type $\bX$
  and let $g\in G$ be an $e$-stingray element with $U_g, F_g$ as in
  Definition~$\ref{def:stingray}$. Then 
  \begin{enumerate}[{\rm (a)}]
  \item $V = U_g\oPerp F_g$, and in particular, $U_g$ and $F_g$ are
    non-degenerate and $U_g^\perp = F_g$;
  \item the parity of $e$ is as  given in
    Table~$\ref{tab:one}$, where if
    $\bX=\bO^\eps$, then $U_g$ has  minus type.
    
  \end{enumerate}
\end{lemma}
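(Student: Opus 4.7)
The plan is to deduce part~(a) from the isometry property of $g$ together with the irreducibility of $g$ on $U_g$, and then extract part~(b) from the field-theoretic structure of $\F_q[g|_{U_g}]$ as a form-preserving module.

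For part~(a), I would first show $F_g \subseteq U_g^\perp$. For any $u = w^g - w \in U_g$ and $f \in F_g$, since $g$ is an isometry and $f^g = f$,
\[
\beta(u,f) = \beta(w^g,f) - \beta(w,f) = \beta(w^g,f^g) - \beta(w,f) = 0.
\]
Non-degeneracy of $V$ gives $\dim U_g^\perp = d - e = \dim F_g$, so $F_g = U_g^\perp$. The intersection $U_g \cap F_g = U_g \cap U_g^\perp$ is a $\langle g\rangle$-invariant subspace of $U_g$, so by irreducibility of $g$ on $U_g$ it equals $0$ or $U_g$; the latter would force $g|_{U_g}$ to act trivially, which is incompatible with irreducibility together with the parity conditions of Table~\ref{tab:one} (in particular $e \ge 2$). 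Hence $U_g \cap F_g = 0$, a dimension count gives $V = U_g \oplus F_g$, and combining with the perpendicularity established above yields $V = U_g \oPerp F_g$; non-degeneracy of $U_g$, and therefore of $F_g$, follows from $U_g \cap U_g^\perp = 0$.

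For part~(b), by~(a) the restriction $g|_{U_g}$ is an irreducible isometry of a non-degenerate form of the ambient type on $U_g$. Since $g|_{U_g}$ has irreducible characteristic polynomial of degree $e$, the commutative $\F_q$-algebra $K \coloneq \F_q[g|_{U_g}]$ is isomorphic to $\F_{q^e}$, and $U_g$ becomes a one-dimensional $K$-vector space. In the symplectic case, a non-degenerate alternating form requires even dimension, so $e$ is even. The unitary case places no parity restriction. In the orthogonal case, identifying $U_g$ with $K$, the $g$-invariant non-degenerate symmetric form must take the shape $(x,y) \mapsto \mathrm{Tr}_{K/\F_q}(\lambda\, x\,\sigma(y))$ for some $\sigma \in \mathrm{Gal}(K/\F_q)$ with $\sigma^2 = 1$. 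Writing $g|_{U_g}$ as multiplication by $\mu \in K^\times$, invariance gives $\mu\sigma(\mu) = 1$; non-triviality of $\sigma$ (forced by the irreducibility of $g$ when $e \ge 2$) requires $\sigma\colon z \mapsto z^{q^{e/2}}$, which exists only when $e$ is even. The corresponding stabiliser in $K^\times$ has order $q^{e/2}+1$, the order of the Singer torus of $\GO_e^-(q)$; hence $U_g$ has minus type.

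The main obstacle is the orthogonal case of~(b): the precise claim that irreducible isometries pick out only minus-type subspaces. The Galois/trace-form sketch above needs attention in characteristic~$2$, where one must work with the quadratic form directly rather than its polar form and verify that the invariant quadratic form can be written, up to scaling, as a trace of a hermitian-like form over the unique degree-two subfield $\F_{q^{e/2}}$ of $K$. Alternatively, one may quote the known classification of maximal anisotropic tori in the orthogonal groups $\GO^\pm_e(q)$ to reach the same conclusion.
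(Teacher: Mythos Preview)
Your part~(a) argument follows the paper's route (perpendicularity from the isometry identity, then irreducibility to force $U_g\cap U_g^\perp=0$), but your handling of the case $U_g\cap F_g=U_g$ is flawed. You appeal to ``the parity conditions of Table~\ref{tab:one} (in particular $e\ge2$)'' to rule it out. This is circular, since Table~\ref{tab:one} is exactly what part~(b) asserts; and it is also incorrect, because in the unitary case Table~\ref{tab:one} gives $e$ odd, so $e=1$ is allowed. The paper simply asserts that $U_g\cap F_g$ is trivial; both arguments tacitly exclude the transvection-like case where $g|_{U_g}$ is the identity on a one-dimensional~$U_g$.

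For part~(b) you take a different route from the paper. The paper observes that, since $g$ is an isometry, the characteristic polynomial $c_h$ of $h=g|_{U_g}$ satisfies $c_h=c_h^{\rm rev}$ (symplectic, orthogonal) or $c_h=(c_h^{\rm rev})^\phi$ (unitary), then cites~\cite{FNP} to read off the parity of $e=\deg c_h$, and~\cite{B} for the minus-type claim. Your field-extension/trace-form sketch is a legitimate alternative for the symplectic and orthogonal parities, but it contains a substantive error in the unitary case: you write that ``the unitary case places no parity restriction,'' whereas Table~\ref{tab:one} says $e$ must be odd. (This is precisely the content of the cited result in~\cite{FNP}: monic irreducible polynomials over $\F_{q^2}$ equal to their own conjugate-reverse have odd degree.) Note also that your algebra $K=\F_q[g|_{U_g}]$ is set up over $\F_q$, while in the unitary case the ambient field is $\F_{q^2}$; this mismatch is likely why the parity constraint went missing. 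Finally, as you yourself flag, the orthogonal minus-type argument in characteristic~$2$ needs more than the trace-form sketch provides; the paper sidesteps this by citing~\cite{B} directly.
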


\begin{proof}
  (a)~This observation dates back at least to~\cite{Wall}*{Corollary p.\,6}.
  Let $\beta\colon V\times V\to\F$ denote
  the   non-degenerate sesquilinear form preserved by $G$.
  Then,
  for $v \in V$ and $f\in F_g$, we have
  $\beta(v, f) = \beta(v^g , f^g ) = \beta(v^g, f )$, so $\beta(v-v^g, f) = 0$.
  Hence $U_g=V(1-g)\le F_g^\perp$. However, $\dim(V)=\dim(U_g)+\dim(F_g)$,
  so $\dim(U_g)=\dim(F_g^\perp)$. Hence $U_g=F_g^\perp$ and so $U_g^\perp=F_g$.
  Since $U_g\cap F_g$ is $\langle g\rangle$-invariant and since $g$ acts irreducibly on $U_g$, it follows that $U_g\cap F_g$ is trivial. Thus
  $U_g\cap U_g^\perp = 0$ since $U_g^\perp=F_g$, and hence $U_g$
  is non-degenerate. Also $F_g=U_g^\perp$ is non-degenerate and $V=U_g\oPerp F_g$.

  (b)~By the definition of an $e$-stingray element, the characteristic 
  polynomial~$c_g(t)$ of $g$ satisfies $c_g(t)=(t-1)^{d-e}c_{h}(t)$,
  where $h=g_{\mid_{U_g}}$ denotes the restriction of $g$ to~$U_g$. Moreover, 
  $U_g$ is an irreducible $\langle h\rangle$-module, so 
  $c_{h}(t)$ is a monic irreducible polynomial over $\F$ of degree $e$, 
  and in particular   $c_h(0)\ne0$.  For any polynomial $f(t)$ over 
  $\F$ with $f(0)\ne0$, let $f^{\textup{rev}}(t)=f(0)^{-1}t^{\deg(f)}
  f(t^{-1})$, the  \emph{reverse polynomial} of $f(t)$. Also, if 
  $\bX=\bU$ and $J\in\GL(V)$, let $J^\phi$ denote 
  the matrix obtained from $J$ by applying $q$th powers to each entry.
  Now since $g\in G^\bX$, we have
  $gJg^\phi=J$ or $gJg^{\rm T}=J$, according as $\bX=\bU$ or 
  $\bX\in\{\bSp,\bO^\varepsilon\}$ respectively, where in both 
  cases $J$ is an (invertible) Gram matrix.  
  Therefore $J^{-1}gJ=g^{-\phi}$ or $J^{-1}gJ=g^{\rm -T}$, 
  and so~$c_g(t)=c_g^{\textup{rev}}(t)^\phi$ or 
  $c_g(t)=c_g^{\textup{rev}}(t)$, respectively.
  It follows that~$c_{h}(t)=c_{h}^{\textup{rev}}(t)^\phi$ or 
  $c_{h}(t)=c_{h}^{\textup{rev}}(t)$, and (since $c_h(t)$ is a 
  monic irreducible) that   $e$ is odd, or $e$ is even, respectively, 
  by~\cite{FNP}*{Lemma~1.3.11(b) and Lemma~1.3.15(c)}.  
  Thus we obtain the restrictions on the parity of $e$ in 
  Table~\ref{tab:one}.
  Finally, if $\bX=\bO^\varepsilon$, then $g_{\mid U_g}$ irreducible implies that 
  the type of $U_g$ is~\emph{minus}, see~\cite{B}*{pp.\,187--188} for example.
\end{proof}

  We shall be studying $(e,e')$-stingray pairs in a group $G=G^\bX$ 
  as in {\rm \,Section~\ref{s:notation}(b)}. In the case where $\bX=\bO^\eps$, 
  we assume that $e, e'$ are both even so in particular $d-e\ge2$. Thus the 
  assumptions in the following lemma will always hold.

\begin{lemma}[\cite{KL}*{Lemma~4.1.1(iv,v)}]\label{l:red}
  Let $G=G^\bX$ be a  group as in {\rm \,Section~\ref{s:notation}(b)} of type
  $\bX$, and let $U$ be a non-degenerate proper subspace
  of~$V$. Moreover if $\bX=\bO^\eps$ assume also that $\dim(V) - \dim(U)\ge 2$.
  Then the group $G^U$ induced (via restriction) on $U$ by the
  setwise stabiliser $G_U$ is the
  full isometry group $\GX(U)$.
\end{lemma}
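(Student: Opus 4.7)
The plan is to prove the containment $G^U \le \GX(U)$ trivially, and then concentrate on the reverse inclusion: given an arbitrary $h \in \GX(U)$, construct an element $g \in G$ fixing $U$ setwise with $g\vert_U = h$. The starting observation is that since $U$ is non-degenerate we have a perpendicular decomposition $V = U \oPerp U^\perp$ with $U^\perp$ non-degenerate as well (of the same type in the symplectic and unitary cases, and of type $\eps\sigma$ in the orthogonal case by~\cite{KL}*{Proposition~2.5.11(ii)}). In particular, block-diagonal embedding gives $\GX(U)\times \GX(U^\perp)\hookrightarrow \GX(V)$, and it suffices to show that for every $h\in \GX(U)$ there exists some $h'\in \GX(U^\perp)$ with $h\oplus h' \in \OmX_d(q)\lhdeq G$.

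Next I would dispatch the easy cases. For $\bX=\bSp$, every isometry has trivial determinant and there are no further invariants, so $g = h \oPerp 1_{U^\perp}$ always lies in $\Sp_d(q)\le G$ and restricts to $h$. For $\bX=\bU$, the determinant map $\det\colon \GU(U^\perp)\to \ker(\mathrm{Nm}_{\F_{q^2}/\F_q})$ is surjective (even when $\dim(U^\perp)=1$), so we can choose $h'\in \GU(U^\perp)$ with $\det(h')=\det(h)^{-1}$, whence $h\oplus h' \in \SU_d(q)\le G$.

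The orthogonal case is where the hypothesis $\dim(U^\perp)\ge 2$ becomes essential, and I expect this to be the main obstacle. Here an element lies in $\Omega_d^\eps(q)$ if and only if both its determinant and its spinor norm are trivial, and both invariants are multiplicative across a perpendicular decomposition. Thus I need $h'\in \GO(U^\perp)$ with $\det(h')=\det(h)^{-1}$ and spinor norm equal to that of $h^{-1}$, i.e.\ the coset of $h'$ in $\GO(U^\perp)/\Omega(U^\perp)$ is prescribed by the coset of $h$. The quotient $\GO(U^\perp)/\Omega(U^\perp)$ has order $4$ generically (indexed by the two invariants), and once $\dim(U^\perp)\ge 2$ one verifies that all four cosets are realised by exhibiting reflections and elements in the distinct cosets (this is where the hypothesis bites: if $\dim(U^\perp)=1$ then $\GO(U^\perp)=\{\pm 1\}$ is far too small). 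Having chosen such an~$h'$, the element $g = h\oplus h'$ lies in $\Omega_d^\eps(q)\le G$ and satisfies $g\vert_U = h$, giving $\GX(U)\le G^U$ in all three cases. This is precisely the content of~\cite{KL}*{Lemma~4.1.1(iv,v)}.
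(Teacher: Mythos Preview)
Your argument is essentially correct and is a genuine, self-contained proof of the statement. The paper, by contrast, does not prove this lemma at all: it simply cites \cite{KL}*{Lemma~4.1.1(iv,v)} and adds a Remark explaining a hidden hypothesis in \cite{KL} (namely that the cited lemma tacitly assumes $\dim(U)\ge \dim(V)/2$), so that part~(iv) is invoked when $\dim(U)\le \dim(V)/2$ and part~(v) when $\dim(U)>\dim(V)/2$, with the condition $\dim(V)-\dim(U)\ge 2$ in the orthogonal case needed to avoid the exception in part~(v). What you have written is, in effect, a direct proof of the relevant portion of \cite{KL}*{Lemma~4.1.1}, so the two approaches differ only in that the paper outsources the work while you carry it out.

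One small caveat: in the orthogonal case with $q$ even (which the paper allows, requiring only $q>2$), the determinant of every orthogonal transformation is $1$ and the spinor norm is replaced by the Dickson invariant, so $\GO(U^\perp)/\Omega(U^\perp)$ has order~$2$ rather than~$4$. Your argument still goes through unchanged in structure---the Dickson invariant is likewise multiplicative across a perpendicular decomposition, and for $\dim(U^\perp)\ge 2$ both cosets are realised---but the phrasing ``determinant and spinor norm'' and ``order~$4$'' should be adjusted to cover even characteristic.
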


\begin{remark}
   Lemma~\ref{l:red} follows from~\cite{KL}*{Lemma~4.1.1(iv,v)}. However in 
   applying this result 
  we note that the statement of~\cite{KL}*{Lemma~4.1.1} involves the hidden
  assumption $\dim(U)\ge\dim(V)/2$ (see~\cite{KL}*{p.\,83, Definition}).
  Thus the conclusion that $G^U$ induces $\GX(U)$ follows from part (iv) 
  of~\cite{KL}*{Lemma~4.1.1} if $\dim(U)\le \dim(V)/2$, and from part (v)  
  of~\cite{KL}*{Lemma~4.1.1} if $\dim(U)>\dim(V)/2$, 
  noting that, in the latter case, our assumption that $\dim(U)\le \dim(V)-2$ 
  when $\bX=\bO^\eps$  avoids the exception
  in~\cite{KL}*{Lemma~4.1.1(v)}.
\end{remark}


\begin{table}
  \caption{The parity of $e$ for different $\bX$.}
\begin{tabular}{rlllll}
\toprule
$\bX$ &&  $\bU$ &$\bSp$ & $\bO^\pm$& $\bO^\circ$\\
\midrule
$\OmX(V)$ &&  $\SU_d(q)$ &$\Sp_d(q)$ & $\Omega^\pm_d(q)$& $\Omega^\circ_d(q)$\\
parity of $e$&& odd&even&even&even\\
\bottomrule
\end{tabular}
\label{tab:one}
\end{table}


\subsubsection{Stingray elements and subspaces}\label{subsub:stingray}

Let $G=G^\bX$ be a group as in Section~\ref{s:notation}(b), let $\cC$ be a $G$-conjugacy class of $e$-stingray elements as in Definition~\ref{def:stingray}(b), and let $g\in G$.
Then by~Lemmas~\ref{lem:unique2}(c) and \ref{l:prop1}, $U_g=\im(g-1)$ is the unique 
$\langle g\rangle$-invariant subspace of $V$ on which $g$ acts non-trivially and irreducibly, $U_g$ is non-degenerate, and $\dim(U_g)=e$ with the parity of $e$ as in Table~\ref{tab:one}. Thus $\cU=\{U_g\mid g\in \cC\}$ is a $G$-orbit of
non-degenerate subspaces, and so $\cU\subseteq\Binom{V}{e}^\bX$ as described
in Section~\ref{s:notation}(c). 
Clearly $G$ acts transitively  via conjugation
on $\cC=g^G$, and the stabiliser of $g$ is $C_G(g)$.
Further, $C_G(g)$ leaves both $U_g$ and $F_g=U_g^\perp$ invariant
and so $C_G(g)\le G_{U_g}\le G$.
We next relate $|\cC|$ and $|\cU|$.


\begin{lemma}\label{lem:SizeOfC}
Let $\cC, \cU$ be as above, and let $g\in\cC$ and $U=U_g$. Then 
  \begin{equation}\label{e:CU}
    |\cC| = |\cU|\cdot |G_{U}: C_G(g)|,
  \end{equation}
  and there are precisely $|\cC|/|\cU|$ elements $g'\in\cC$ such that $U_{g'}=U$.
\end{lemma}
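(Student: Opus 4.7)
The plan is to view the map $\pi\colon \cC\to\cU$, defined by $g'\mapsto U_{g'}$, as a $G$-equivariant surjection of transitive $G$-sets and extract both statements from a standard fiber-counting argument.

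First I would check the two ingredients needed to run this argument. (i) The map $\pi$ is $G$-equivariant: for any $h\in G$, since $g^h-1 = h^{-1}(g-1)h$, we have $U_{g^h}=\im(g^h-1)=\im(g-1)\cdot h = U_g\cdot h$, so $\pi(g^h)=\pi(g)\cdot h$. (ii) The stabiliser of $g$ in $G$ (which is $C_G(g)$) is contained in the stabiliser of $U=U_g$ in $G$ (which is $G_U$), because any element centralising $g$ commutes with $g-1$ as a linear transformation and hence preserves $\im(g-1)=U$. Together with the remarks preceding the lemma, which already establish that $\cC=g^G$ is a transitive $G$-set with stabiliser $C_G(g)$, and that $\cU=\{U_{g'}\mid g'\in\cC\}$ is a transitive $G$-set with stabiliser $G_U$, this gives us the setup.

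Now I would invoke the standard lemma: if $\pi\colon X\to Y$ is a $G$-equivariant surjection between transitive $G$-sets with $\pi(x_0)=y_0$, then the fiber $\pi^{-1}(y_0)$ equals the orbit $G_{y_0}\cdot x_0$, and hence has cardinality $|G_{y_0}:G_{x_0}|$. Applied in our situation with $x_0=g$ and $y_0=U$, this shows that the set $\{g'\in\cC\mid U_{g'}=U\}$ has size $|G_U:C_G(g)|$. Since $\cC$ is the disjoint union of the fibers $\pi^{-1}(U')$ for $U'\in\cU$, and all these fibers have the same cardinality by $G$-transitivity on $\cU$, we obtain
\[
|\cC|=|\cU|\cdot |G_U:C_G(g)|,
\]
which is the first displayed equality. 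The second assertion, that there are exactly $|\cC|/|\cU|$ elements $g'\in\cC$ with $U_{g'}=U$, is then immediate by rearranging this identity.

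There is no real obstacle here, and no delicate estimate is required; the only nontrivial point worth stating carefully is the $G$-equivariance computation $U_{g^h}=U_g\cdot h$, together with the inclusion $C_G(g)\le G_U$. Everything else is orbit–stabiliser and a fiber count.
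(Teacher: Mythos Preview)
Your proof is correct and is essentially the same as the paper's: both rely on the inclusion $C_G(g)\le G_U\le G$ and orbit--stabiliser, with the paper writing this directly as $|\cC|=|G:C_G(g)|=|G:G_U|\cdot|G_U:C_G(g)|=|\cU|\cdot|G_U:C_G(g)|$, while you phrase the identical computation in the language of a $G$-equivariant fibration $\pi\colon\cC\to\cU$. The only additional content in your write-up is the explicit verification of equivariance $U_{g^h}=U_g\cdot h$, which the paper takes for granted from the preceding discussion.
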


\begin{proof} It follows from $C_G(g)\le G_{U}\le G$ that
\[
  |\cC| = |G:C_G(g)| = |G:G_{U}| \cdot |G_{U}: C_G(g)|
    = |\cU| \cdot |G_{U}: C_G(g)|.
\]
  Therefore the action of $G$ on $\cC$ preserves the partition of $\cC$ into
  classes $\cC(U')$ for $U'\in\cU$ where $\cC(U'):=\{ g'\in \cC \mid\ U_{g'}=U'\}$,
  and the number of conjugates in each of these classes is $|G_{U}: C_G(g)|$. 
\end{proof}

 \subsubsection{Stingray duos}
 Suppose that the hypotheses of Section~\ref{s:notation}(a,b) hold, let $G=G^\bX$, and
 let ${\mathcal C}$ be a $G$-conjugacy class of $e$-stingray elements
 and ${\mathcal C}'$ a $G$-conjugacy class of $e'$-stingray elements
 such that $e\ge e'$ and $e+e'\le d$. Thus each $(g,g')\in {\mathcal C}\times
 {\mathcal C}'$ is an $(e, e')$-stingray pair, as in
 Definition~\ref{def:stingray}.  As defined there, we will say that
 $(g,g')$ is a \emph{stingray duo} if $U_g\cap U_{g'}=0$ and
 $U_g\oplus U_{g'}$ is non-degenerate.
 Let $\cU=\{U_g\mid g\in\cC\}$ and $\cU'=\{U_{g'}\mid g'\in\cC'\}$, so that, as noted in the previous subsection, $\cU$ is a $G$-orbit
 contained in $\Binom{V}{e}^\bX$, and $\cU'$ 
 is a $G$-orbit contained in $\Binom{V}{e'}^\bX$. We denote the proportion of stingray duos in $ {\mathcal C}\times {\mathcal C}'$ by  
  \begin{align}\label{e:rhouseful}
  \rhosting &=  \frac{|\{(g,g') \in \cC\times \cC'\mid
    \textup{$(e,e')$-stingray\ duo}\}|}{|\cC\times \cC'|}.
  \end{align}

 \begin{lemma}\label{l:stingequalspaces}
 Let $\cC, \cC',\cU, \cU'$ be as above. Then $\rhosting = \rho(\bX,V,\cU,\cU')$.
\end{lemma}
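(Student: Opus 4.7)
The plan is to prove the identity by a straightforward fibered counting argument. The key observation is that the ``stingray duo'' condition on a pair $(g,g') \in \cC \times \cC'$ depends only on the pair $(U_g, U_{g'})$ of subspaces, not on the individual elements $g, g'$; and the correspondence $g \mapsto U_g$ is a uniform-fiber surjection from $\cC$ onto $\cU$.

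First I would set up the two maps $\pi\colon \cC \to \cU$, $g \mapsto U_g$, and $\pi'\colon \cC' \to \cU'$, $g' \mapsto U_{g'}$. These are well-defined and surjective by the definition of $\cU$ and $\cU'$. By Lemma~\ref{lem:SizeOfC} applied to $\cC$ and (separately) $\cC'$, every fiber $\pi^{-1}(U)$ has size exactly $|\cC|/|\cU|$ and every fiber $(\pi')^{-1}(U')$ has size exactly $|\cC'|/|\cU'|$. Consequently, the product map $\pi \times \pi'\colon \cC \times \cC' \to \cU \times \cU'$ has uniform fiber size $(|\cC|/|\cU|)(|\cC'|/|\cU'|)$.

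Next, I would observe that the defining property of an $(e,e')$-stingray duo, namely that $(U_g, U_{g'})$ is an $\bX$-duo, factors through $\pi \times \pi'$: the pair $(g,g')$ is a stingray duo if and only if $(\pi(g),\pi'(g'))$ is an $\bX$-duo in $\cU \times \cU'$. Therefore
\begin{equation*}
|\{(g,g') \in \cC\times \cC' \mid \text{$(e,e')$-stingray duo}\}|
 = \frac{|\cC|}{|\cU|} \cdot \frac{|\cC'|}{|\cU'|} \cdot |\{(U,U') \in \cU \times \cU' \mid \text{$\bX$-duo}\}|.
\end{equation*}
Dividing both sides by $|\cC \times \cC'| = |\cC|\cdot|\cC'|$ and comparing with the definitions~\eqref{d:rho} and~\eqref{e:rhouseful} of $\rho(\bX,V,\cU,\cU')$ and $\rhosting$ yields the claimed equality.

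There is no real obstacle here; the only thing one must be careful about is confirming that Lemma~\ref{lem:SizeOfC} applies uniformly to every class in both $\cC$ and $\cC'$ (i.e., that the fiber size is independent of the chosen element), and that the stingray-duo predicate truly depends only on the pair of images. Both facts are immediate from the setup in Subsection~\ref{subsub:stingray} and Definition~\ref{def:stingray}(d).
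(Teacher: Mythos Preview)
Your proof is correct and follows essentially the same approach as the paper: both arguments use Lemma~\ref{lem:SizeOfC} to establish that each $U\in\cU$ arises from exactly $|\cC|/|\cU|$ elements of $\cC$ (and similarly for $\cU'$), observe that the stingray-duo condition depends only on $(U_g,U_{g'})$, and then carry out the same fiber-count to identify the two proportions. Your presentation in terms of the product map $\pi\times\pi'$ with uniform fibers is a slightly more structured phrasing of exactly the same computation.
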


 \begin{proof}
 It follows from Lemma~\ref{lem:SizeOfC} that each $U\in\cU$ is equal to $U_g$ for exactly $|\cC|/|\cU|$ elements $g\in\cC$. Similarly, each $U'\in\cU'$ is equal to $U_{g'}$ for exactly $|\cC'|/|\cU'|$ elements $g'\in\cC'$. By Definition~\ref{def:stingray}(d), a pair $(g,g')\in\cC\times \cC'$ is an $(e,e')$-stingray duo if and only if $(U_g, U_{g'})$ is an $\bX$-duo in $\cU\times \cU'$. Hence the number of  $(e,e')$-stingray duos in 
 $\cC\times \cC'$ is equal to $(|\cC|/|\cU|)\cdot (|\cC'|/|\cU'|)\cdot Y$,
 where $Y$ is the number of  $\bX$-duos in $\cU\times \cU'$. By \eqref{d:rho}, 
 $Y= |\cU|\cdot |\cU'|\cdot  \rho(\bX,V,\cU,\cU')$, and hence the number of $(e,e')$-stingray duos in 
 $\cC\times \cC'$ is $|\cC|\cdot |\cC'|\cdot  \rho(\bX,V,\cU,\cU')$. It follows that 
 $\rhosting = \rho(\bX,V,\cU,\cU')$.
\end{proof}

\section{Counting non-degenerate subspaces}\label{sec:nondegsub}
Let $V$ be a $d$-dimensional classical space over a finite field $\F$.
An $e$-dimensional subspace of $V$ is called an
\emph{$e$-subspace} and the set of all of non-degenerate $e$-subspaces
is denoted by $\Binom{V}{e}$.
In the orthogonal case, $V$ has type $\eps\in\{-,\circ,+\}$ and the set of
non-degenerate $e$-subspaces of subspace type $\tau$ is denoted
$\Binom{V}{e}^\eps_\tau$.
For our application, $e$ is even so $\tau\in\{-,+\}$, subspace type equals intrinsic type, and~$d$ may be odd or even.
We assume that $q$ is odd if $d$ is odd. Hence $V=U\oPerp U^\perp$ holds for
$U\in\Binom{V}{e}^\eps_\tau$. Furthermore, if $U$ has type $\tau\in\{-,+\}$,
then it follows from~\cite{KL}*{Proposition~2.5.11(ii)} that $U^\perp$
has type~$\eps\tau$ (even when $d$ is odd).

The order of a classical group can be expressed as a power of $q$ times
a rational function in $q$ which approaches 1 as $q\to\infty$.
Such a rational function is
\begin{equation}\label{E:omega}
\omega(d,q)\coloneq\prod_{i=1}^{d}(1-q^{-i})
    \qquad\textup{for $d\ge0$ where $\omega(0,q)=1$.}
\end{equation}
    The orders of the isometry groups
      $\GU_d(q), \Sp_d(q), \GO_d^\eps(q)$
      given in~\cite[Table~2.1.C,\,p.19]{KL} are rewriten below
      in terms of the `dominant  power of $q$'.
In~\eqref{E:SpOord} below we identify the symbols $-,\circ,+$ with the
numbers $-1,0,1$, respectively. Then
\begin{align}
  |\GL_d(q)|&=q^{d^2}\omega(d,q), \qquad&&|\GU_d(q)|=q^{d^2}\omega(d,-q), \label{E:Uord}\\
  |\Sp_d(q)|&=q^{\binom{d+1}{2}}\omega(\frac{d}{2},q^2), 
  &&|\GO_d^\eps(q)|=2q^{\binom{d}{2}}\frac{\omega(\lfloor\frac{d}{2}\rfloor,q^2)}
                               {1+\eps q^{-\lfloor\frac{d}{2}\rfloor}}.\label{E:SpOord}
\end{align}

In the unitary case of Proposition~\ref{prop:pA}, we assume that $1\le e\le d-1$, and in the symplectic
and orthogonal cases we assume that $e$ is even and $2\le e\le d-2$.


\begin{proposition}\label{prop:pA}  
  Let $V$ be a non-degenerate $d$-dimensional classical space. Then
  \begin{enumerate}[{\rm (a)}]
  \item 
    The number of non-degenerate $e$-subspaces of a unitary
    space $V=(\F_{q^2})^d$ is
    \[
    \left|\Binom{V}{e}^{\bf U}\right|
    =\frac{q^{2e(d-e)}\omega(d,-q)}{\omega(e,-q)\omega(d-e,-q)}.
    \]
  \item 
    The number of non-degenerate $e$-subspaces of a symplectic
    space $V=(\F_q)^d$ is
    \[
    \left|\Binom{V}{e}^{\bf Sp}\right|
    =\frac{q^{e(d-e)}\omega(\frac{d}{2},q^2)}
          {\omega(\frac{e}{2},q^2)\omega(\frac{d-e}{2},q^2)}.
    \]
  \item
    Let $V=(\F_q)^d$ be an orthogonal space of type $\eps$. For $e$ even,
    the number of non-degenerate $e$-subspaces of type $\tau\in\{-,+\}$ is
  \begin{align*}
    \left|\Binom{V}{e}^\eps_{\tau}\right|&=
    \displaystyle\frac{q^{e(d-e)}(1+\tau q^{-\frac{e}{2}})
      (1+\eps\tau q^{-\lfloor\frac{d}{2}\rfloor+\frac{e}{2}})}
                      {2(1+\eps q^{-\lfloor\frac{d}{2}\rfloor})}\cdot
                      \frac{\omega(\lfloor\frac{d}{2}\rfloor,q^2)}
          {\omega(\frac{e}{2},q^2)\omega(\lfloor\frac{d}{2}\rfloor-\frac{e}{2},q^2)}.
  \end{align*}
  \end{enumerate}
\end{proposition}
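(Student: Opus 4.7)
The plan is to apply the orbit–stabiliser theorem together with the explicit orders of the classical groups recorded in \eqref{E:Uord} and \eqref{E:SpOord}. In Section~\ref{s:notation}(c) it is already noted that $\GU_d(q)$ is transitive on $\Binom{V}{e}^\bU$ and $\Sp_d(q)$ is transitive on $\Binom{V}{e}^{\bSp}$, while in the orthogonal case $\GO_d^\eps(q)$ is transitive on each set $\Binom{V}{e}^\eps_\tau$ for $\tau\in\{-,+\}$. Thus it suffices to compute the order of the stabiliser of a fixed non-degenerate $e$-subspace $U$ of the appropriate type.

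Since $V = U \oPerp U^\perp$ is a perpendicular direct sum of non-degenerate subspaces, an element of $\GX(V)$ that stabilises $U$ setwise also stabilises $U^\perp$, and its restrictions to each summand are isometries; conversely any pair of isometries on $U$ and on $U^\perp$ combines to an isometry of $V$. Hence the stabiliser of $U$ in $\GX(V)$ equals $\GX(U) \times \GX(U^\perp)$. In the orthogonal case, \cite{KL}*{Proposition~2.5.11(ii)} (quoted in Section~\ref{s:notation}(c)) gives that if $U$ has type $\tau$ then $U^\perp$ has type $\eps\tau$, so the stabiliser of $U \in \Binom{V}{e}^\eps_\tau$ equals $\GO_e^\tau(q) \times \GO_{d-e}^{\eps\tau}(q)$.

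It remains to substitute the orders from \eqref{E:Uord} and \eqref{E:SpOord} into
\[
\left|\Binom{V}{e}^\bU\right| = \frac{|\GU_d(q)|}{|\GU_e(q)|\,|\GU_{d-e}(q)|},
\quad
\left|\Binom{V}{e}^{\bSp}\right| = \frac{|\Sp_d(q)|}{|\Sp_e(q)|\,|\Sp_{d-e}(q)|},
\]
and the analogous expression in the orthogonal case. The $\omega$-factors transfer directly to produce the numerators and denominators stated in (a)--(c), and the leading powers of $q$ reduce to $e(d-e)$ (to $2e(d-e)$ in the unitary case) via the identities
\[
d^2 - e^2 - (d-e)^2 = 2e(d-e), \qquad \tbinom{d+1}{2} - \tbinom{e+1}{2} - \tbinom{d-e+1}{2} = \tbinom{d}{2} - \tbinom{e}{2} - \tbinom{d-e}{2} = e(d-e).
\]

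The only point requiring care is part (c), where each of the three $|\GO|$-factors contributes a correction of the form $(1 + \star\, q^{-\lfloor \cdot /2 \rfloor})^{-1}$ together with a factor of $2$. Because $e$ is even we have $\lfloor d/2\rfloor - e/2 = \lfloor (d-e)/2\rfloor$, so the $U^\perp$ correction $1 + \eps\tau q^{-\lfloor(d-e)/2\rfloor}$ rewrites as $1 + \eps\tau q^{-\lfloor d/2\rfloor + e/2}$, matching the stated formula; and the three factors of $2$ combine (as $2/(2\cdot 2)$) to leave a single $2$ in the denominator. This bookkeeping is the main, though modest, obstacle; the underlying argument is the standard orbit–stabiliser computation.
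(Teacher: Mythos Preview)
Your proposal is correct and follows essentially the same route as the paper: transitivity of the full isometry group on non-degenerate $e$-subspaces (of a given type in the orthogonal case), orbit--stabiliser, identification of the stabiliser as $\GX(U)\times\GX(U^\perp)$ (with $U^\perp$ of type $\eps\tau$ via \cite{KL}*{Proposition~2.5.11(ii)} when $\bX=\bO^\eps$), and substitution of the order formulas \eqref{E:Uord}, \eqref{E:SpOord}. The paper invokes Witt's Theorem and \cite{KL}*{Lemma~2.1.5(v), Table~4.1.A} for the transitivity and stabiliser shape where you refer back to Section~\ref{s:notation}(c) and argue the stabiliser structure directly, but the substance is the same.
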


\begin{proof}
  The stabiliser of a non-degenerate subspace $U$ of $V$
    equals the stabiliser of the decomposition $V=U\oPerp U^\perp$
    by~\cite[Lemma~2.1.5(v)]{KL}, and the shape of these stabilisers is 
    given in \cite[Table 4.1.A]{KL}. We use the formulas~\eqref{E:Uord}
    and~\eqref{E:SpOord}.
  
  (a)~It follows from Witt's Theorem, the orbit-stabiliser lemma
  and~\eqref{E:Uord}, that the number of non-degenerate
  $e$-subspaces of $V$ is
  \[
  \frac{|\GU_d(q)|}{|\GU_e(q)||\GU_{d-e}(q)|}
  =\frac{q^{d^2}\omega(d,-q)}{q^{e^2}\omega(e,-q)q^{(d-e)^2}\omega(d-e,-q)}
  =\frac{q^{2e(d-e)}\omega(d,-q)}{\omega(e,-q)\omega(d-e,-q)}.
  \]

  (b)~This proof is similar to part~(a).

  (c)~Suppose that $U$ is a non-degenerate $e$-subspace of $V$ of type $\tau$.
  By the preamble to this proposition, $V=U\oPerp U^\perp$ and $U^\perp$ has
  type $\eps\tau$. Hence the stabiliser of $U$ in $\GO_d^\eps(q)$
  is $\GO_e^\tau(q)\times\GO_{d-e}^{\eps\tau}(q)$ by Witt's Theorem.
  It follows from the orbit-stabiliser lemma and~\eqref{E:SpOord} that
  $\left|\Binom{V}{e}^\eps_{\tau}\right|$ equals
  \[
  \frac{|\GO_d^\eps(q)|}{|\GO_e^\tau(q)\times\GO_{d-e}^{\eps\tau}(q)|}=
      \displaystyle\frac{q^{\binom{d}{2}-\binom{e}{2}-\binom{d-e}{2}}
      (1{+}\tau q^{-\frac{e}{2}})
      (1{+}\eps\tau q^{-\lfloor\frac{d}{2}\rfloor+\frac{e}{2}})}
                      {2(1+\eps q^{-\lfloor\frac{d}{2}\rfloor})}\cdot
                      \frac{\omega(\lfloor\frac{d}{2}\rfloor,q^2)}
          {\omega(\frac{e}{2},q^2)\omega(\lfloor\frac{d}{2}\rfloor{-}\frac{e}{2},q^2)}.
   \]
   However, $\binom{d}{2}-\binom{e}{2}-\binom{d-e}{2}=e(d-e)$ and
   the result follows.
\end{proof}

\section{Bounding rational functions in \texorpdfstring{$q$}{}}\label{sec:nt}

This section derives bounds that are used to estimate functions in
our main theorems.
The following infinite products provide useful limiting bounds:
\[
\omega(\infty,q)=\prod_{i=1}^\infty(1-q^{-i})
\quad\textup{and}\quad
\omega(\infty,-q)=\prod_{j=1}^\infty(1-(-q)^{-j})=\prod_{i=1}^\infty(1+q^{-(2i-1)})(1-q^{-2i})
.
\]

\begin{lemma}\label{lem:omegabounds}
Suppose $q>1$. Using the notation~\eqref{E:omega} we have:
\begin{align*}
&1-q^{-1}-q^{-2}+q^{-5}<\omega(\infty,q)<\cdots<\omega(2,q)<\omega(1,q)<\omega(0,q)=1\quad\textup{and,}\\
&1=\omega(0,-q)<\omega(2,-q)<\cdots<\omega(\infty,-q)<\cdots<\omega(3,-q)<\omega(1,-q)=1+q^{-1}.
\end{align*}
Moreover,
\[
  \lim_{n\to\infty}\omega(n,q)=\omega(\infty,q)
  \quad\textup{and}\quad
  \lim_{m\to\infty}\omega(2m,-q)=\lim_{m\to\infty}\omega(2m+1,-q)=\omega(\infty,-q).
\]
\end{lemma}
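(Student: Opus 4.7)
The plan is to prove the three displayed chains and limit statements in turn.

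For the chain with argument $q$: since $q \ge 2$ we have $0 < 1 - q^{-i} < 1$ for all $i \ge 1$, so the recursion $\omega(n,q) = \omega(n-1,q)(1-q^{-n})$ forces strict monotone decrease and keeps the terms positive. The sequence is therefore convergent (monotone and bounded below by $0$), with limit $\omega(\infty,q)$ by definition of the infinite product; this also yields the first limit identity. The left-most inequality $\omega(\infty,q) > 1 - q^{-1} - q^{-2} + q^{-5}$ is the one nontrivial point, and I would establish it via Euler's pentagonal number theorem,
\[
\omega(\infty,q) = \sum_{k \in \Z}(-1)^k q^{-k(3k-1)/2} = 1 - q^{-1} - q^{-2} + q^{-5} + q^{-7} - q^{-12} - q^{-15} + q^{-22} + q^{-26} - \cdots.
\]
It then suffices to show the tail $q^{-7} - q^{-12} - q^{-15} + q^{-22} + q^{-26} - q^{-35} - q^{-40} + \cdots$ is positive for $q \ge 2$. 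The first three tail terms bundle as $q^{-7}(1 - q^{-5} - q^{-8}) > 0$, and thereafter the terms group into blocks of the form $q^{-a} + q^{-b} - q^{-c} - q^{-d}$ with $a<b<c<d$ (the two positive exponents coming from one pentagonal index, the two negative from the next), each manifestly positive.

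For the chain with argument $-q$, I exploit the parity split $1 - (-q)^{-i} = 1 + q^{-i}$ when $i$ is odd and $1 - q^{-i}$ when $i$ is even. A direct calculation gives
\[
\frac{\omega(2m+2,-q)}{\omega(2m,-q)} = (1 + q^{-(2m+1)})(1 - q^{-(2m+2)}) = 1 + q^{-(2m+1)} - q^{-(2m+2)} - q^{-(4m+3)},
\]
which exceeds $1$ for $q \ge 2$ by the elementary estimate $q^{-(2m+1)} > q^{-(2m+2)} + q^{-(4m+3)}$. Hence the even-indexed subsequence is strictly increasing; the symmetric calculation $\omega(2m+3,-q)/\omega(2m+1,-q) = (1 - q^{-(2m+2)})(1 + q^{-(2m+3)}) < 1$ shows the odd-indexed subsequence is strictly decreasing. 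Both subsequences are monotone and bounded, since $\omega(2m+1,-q) = \omega(2m,-q)(1 + q^{-(2m+1)}) > \omega(2m,-q)$ interlocks them; consecutive ratios tend to $1$ and the infinite product $\prod_{j=1}^\infty (1 - (-q)^{-j})$ converges absolutely, so both sub-limits coincide with $\omega(\infty,-q)$, which therefore lies strictly between the two subsequences, giving both the second chain and its limit identities.

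The main obstacle is the lower bound in the $q$-chain: once the pentagonal expansion is in hand the grouping argument is routine, but it relies on invoking the classical identity as a black box, and some bookkeeping of exponents is needed to verify block positivity uniformly for $q \ge 2$.
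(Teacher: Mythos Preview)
Your argument coincides with the paper's for the monotonicity chains and the limits: the paper uses the recursion $\omega(n,q)=\omega(n-1,q)(1-q^{-n})$ for the first chain, and for the second chain records precisely your two-factor inequality as
\[
(1-q^{-2i})(1+q^{-(2i+1)})<1<(1+q^{-(2i-1)})(1-q^{-2i}),
\]
which is your ratio computation re-indexed. The only real difference is the lower bound $1-q^{-1}-q^{-2}+q^{-5}<\omega(\infty,q)$: the paper simply cites \cite{NP}*{Lemma~3.5}, whereas you supply a self-contained argument via Euler's pentagonal number theorem with term-grouping. Your route is more explicit and avoids the external dependency; the paper's is shorter. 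One small caveat: you write $q\ge2$ while the lemma is stated for real $q>1$; in fact the inequality $\omega(0,-q)<\omega(2,-q)$ already fails for $1<q<\tfrac{1+\sqrt5}{2}$, so both your proof and the paper's implicitly need $q$ at least this large, and $q\ge2$ is the only case actually used elsewhere in the paper.
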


\begin{proof}
  For bounds such as $1-q^{-1}-q^{-2}<\omega(\infty,q)$ or
  $1-q^{-1}-q^{-2}+q^{-5}<\omega(\infty,q)$, see~\cite{NP}*{Lemma~3.5}.
  Since
  $\omega(n,q)=\omega(n-1,q)(1-q^{-n})<\omega(n-1,q)$, the first inequalities
  follow. Clearly $\omega(n,q)\to\omega(\infty,q)$ as $n\to\infty$.
  The second inequalities use
  \begin{equation}\label{E:bb}
    (1-q^{-2i})(1+q^{-(2i+1)})<1<(1+q^{-(2i-1)})(1-q^{-2i}).
  \end{equation}
  Not only does this imply that the infinite product $\omega(\infty,-q)$
  converges to a positive limit for all $q>1$, it shows that
  $\lim_{m\to\infty}\omega(2m,-q)=\lim_{m\to\infty}\omega(2m+1,-q)=\omega(\infty,-q)$.
\end{proof}

By virtue of the symmetry $\binom{n}{n-k}_q<\binom{n}{k}_q$,
we assume in the following lemma that $k\le\lfloor\frac{n}{2}\rfloor$.
Similarly, since
$\binomqq{n}{n-k}=\binomqq{n}{k}$ we assume $k\le\lfloor\frac{n}{2}\rfloor$
whether $k$ is even or odd.

\begin{lemma}\label{lem:binombounds}
  Suppose $q>1$ and $k\in\{0,1,\dots,n\}$. Using~\eqref{E:omega}, we define
  \begin{equation}\label{E:binom}
    \binomq{n}{k}\coloneq\frac{\omega(n,q)}{\omega(k,q)\omega(n-k,q)}
    \quad\textup{and,}\quad
    \binomqq{n}{k}\coloneq\frac{\omega(n,-q)}{\omega(k,-q)\omega(n-k,-q)}.
  \end{equation}
  Then
\[
1=\binomq{n}{0}<\binomq{n}{1}<\binomq{n}{2}<\binomq{n}{3}<\cdots<\binomq{n}{\lfloor \frac{n}{2}\rfloor}<\frac{1}{\omega(\infty,q)}<\frac{1}{1-q^{-1}-q^{-2}}.
\]
Further, if $i\coloneq\lfloor\frac{1}{2}\lfloor\frac{n}{2}\rfloor-\frac{1}{2}\rfloor$
and $j\coloneq\lfloor\frac{1}{2}\lfloor\frac{n}{2}\rfloor\rfloor$, then
$2i+1\le\lfloor\frac{n}{2}\rfloor$, $2j\le\lfloor\frac{n}{2}\rfloor$ and
\begin{align*}
  \frac{1-(-q)^{-n}}{1+q^{-1}}&=\binomqq{n}{1}<\binomqq{n}{3}<\cdots
  <\binomqq{n}{2i+1}
  <\frac{1}{\omega(\infty,-q)}\\
  \mbox{and}\quad\frac{1}{\omega(\infty,-q)}&<\binomqq{n}{2j}<\cdots<\binomqq{n}{2}<\binomqq{n}{0}=1.
\end{align*}
\end{lemma}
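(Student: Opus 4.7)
The plan is to control each binomial by comparing it to the preceding value in the claimed chain, so that monotonicity reduces to a sign check for a simple ratio, and then to use the parity-sensitive behaviour of $\omega(\cdot,\pm q)$ recorded in Lemma~\ref{lem:omegabounds} to pin down the uniform bounds.

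For the $q$-case I would compute the single-step ratio
\[
\frac{\binomq{n}{k+1}}{\binomq{n}{k}} = \frac{\omega(k,q)\,\omega(n-k,q)}{\omega(k+1,q)\,\omega(n-k-1,q)} = \frac{1 - q^{-(n-k)}}{1 - q^{-(k+1)}},
\]
which is strictly greater than $1$ exactly when $n-k > k+1$, and this holds whenever $k+1 \le \lfloor n/2 \rfloor$ (a short check on the parity of $n$ confirms this), giving the strict increase. For the uniform upper bound I would rewrite $\binomq{n}{k} = \omega(n,q)/(\omega(k,q)\,\omega(n-k,q))$ and invoke $\omega(n,q) < \omega(n-k,q)$ from Lemma~\ref{lem:omegabounds} together with $\omega(k,q) > \omega(\infty,q)$ to obtain $\binomq{n}{k} < 1/\omega(k,q) < 1/\omega(\infty,q)$; the concluding strict inequality $1/\omega(\infty,q) < 1/(1 - q^{-1} - q^{-2})$ is immediate from the bound $\omega(\infty,q) > 1 - q^{-1} - q^{-2} + q^{-5}$ in Lemma~\ref{lem:omegabounds}.

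For the $(-q)$-case the analogous single-step ratio has a parity-dependent sign, so I would work with the two-step ratio
\[
\frac{\binomqq{n}{k+2}}{\binomqq{n}{k}} = \frac{(1 - (-q)^{-(n-k-1)})(1 - (-q)^{-(n-k)})}{(1 - (-q)^{-(k+1)})(1 - (-q)^{-(k+2)})},
\]
and carry out a case analysis on the parity of $k$ (with sub-cases on the parity of $n$), using $n-k-2 \ge k$ whenever $k+2 \le \lfloor n/2 \rfloor$, to show that the ratio is strictly greater than $1$ when $k$ is odd and strictly less than $1$ when $k$ is even. This gives the monotonicity of the odd- and even-indexed chains, with the explicit value $\binomqq{n}{1} = (1 - (-q)^{-n})/(1 + q^{-1})$ read off from $\omega(1,-q) = 1 + q^{-1}$. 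For the bound by $1/\omega(\infty,-q)$ I would substitute into $\binomqq{n}{k} = \omega(n,-q)/(\omega(k,-q)\,\omega(n-k,-q))$ the parity-aware estimates of Lemma~\ref{lem:omegabounds} (odd-indexed $\omega$ values lying above $\omega(\infty,-q)$, even-indexed ones below), noting that the parities of $k$ and $n-k$ jointly determine the parity of $n$: a short case check then confirms that each factor is bounded on the correct side of $\omega(\infty,-q)$, and $\binomqq{n}{2i+1} < 1/\omega(\infty,-q) < \binomqq{n}{2j}$ drops out.

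The main obstacle is the sign alternation of $(-q)^{-i}$, which forces the $(-q)$-case into a four-way split according to the parities of both $k$ and $n$; each sub-case is routine once the correct direction of the inequality has been identified from Lemma~\ref{lem:omegabounds}, but the bookkeeping must be done carefully to avoid confusing increases with decreases.
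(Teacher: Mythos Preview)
Your monotonicity arguments and the $q$-case upper bound coincide with the paper's: the single-step ratio $(1-q^{-(n-k)})/(1-q^{-(k+1)})$ for $\binomq{n}{k}$ and the two-step ratio for $\binomqq{n}{k}$ are exactly what the paper uses (it phrases the latter as ``similar reasoning using $\omega(k-2,-q)>0$ and~\eqref{E:bb}''), and your bound $\binomq{n}{k}<1/\omega(k,q)<1/\omega(\infty,q)$ is equivalent to the paper's use of~\eqref{E:bat}.

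There is, however, a real gap in your treatment of the bounds $\binomqq{n}{2i+1}<\frac{1}{\omega(\infty,-q)}<\binomqq{n}{2j}$. The claim that after the parity split ``each factor is bounded on the correct side of $\omega(\infty,-q)$'' is false. Take $k$ odd and $n$ odd: then $n-k$ is even, so Lemma~\ref{lem:omegabounds} gives $\omega(n,-q)>\omega(\infty,-q)$ and $\omega(k,-q)>\omega(\infty,-q)$, but $\omega(n-k,-q)<\omega(\infty,-q)$; substituting these into $\binomqq{n}{k}=\omega(n,-q)/(\omega(k,-q)\,\omega(n-k,-q))$ pushes the estimate the wrong way. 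Concretely, for $q=2$, $n=7$, $k=3$ one has $\binomqq{7}{3}\approx 0.808<0.826\approx 1/\omega(\infty,-2)$, yet the sharpest bound the parity estimates alone deliver is $\binomqq{7}{3}<\omega(1,-2)/\omega(\infty,-2)\approx 1.24$. The symmetric obstruction occurs for $k$ even and $n$ odd when you try to show $\binomqq{n}{k}>1/\omega(\infty,-q)$. The paper does not argue by factor-wise substitution here: it combines the expression~\eqref{E:bat} with the limits $\lim_{n\to\infty}\binomqq{n}{2i+1}=\lim_{n\to\infty}\binomqq{n}{2j}=1/\omega(\infty,-q)$ (where $i,j$ grow with $n$), and it is this limit ingredient that your proposal is missing.
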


\begin{proof}
  Suppose that $1\le k\le\lfloor\frac{n}{2}\rfloor$.
  Cancelling  $\omega(n-k,q)$ and $\omega(n-k,-q)$ gives
  \begin{equation}\label{E:bat}
    \binomq{n}{k}
    =\frac{\prod_{i=1}^{k}(1-q^{-(n-k+i)})}{\omega(k,q)}
    \quad\mbox{and}\quad
    \binomqq{n}{k}
    =\frac{\prod_{i=1}^{k}(1-(-q)^{-(n-k+i)})}{\omega(k,-q)}.
  \end{equation}
  
  The inequality
  $\binom{n}{k-1}_q<\binom{n}{k}_q$ holds as $\omega(k-1,q)>0$
  and $1-q^{-k}<1-q^{-(n-k+1)}$. It follows from~\eqref{E:bat}
  and Lemma~\ref{lem:omegabounds}
  that $\binomq{n}{\lfloor\frac{n}{2}\rfloor}<\frac{1}{\omega(\infty,q)}$.
  Hence the first chain of inequalities is true.
  Similar reasoning using
  $\omega(k-2,-q)>0$ and~\eqref{E:bb} establishes the remaining inequalities
  with the exception of
  $\binomqq{n}{2i+1} <\frac{1}{\omega(\infty,-q)}<\binomqq{n}{2j}$.
  This follows from~\eqref{E:bat}, the definitions of $i$ and $j$ in
  terms of $n$, and
  $\lim_{n\to\infty}\binomqq{n}{2i+1} =\lim_{n\to\infty}\binomqq{n}{2j}=\frac{1}{\omega(\infty,-q)}$.
\end{proof}

The bounds in Lemmas~\ref{lem:omegabounds} and~\ref{lem:binombounds} have
largest error for small $q$. For example,
\[
\omega(\infty,2)=0.288\cdots, \omega(\infty,-2)=1.210\cdots, 
\omega(\infty,3)=0.560\cdots, \omega(\infty,-3)=1.217\cdots.
\]

In Lemmas~\ref{lem:omMinus} and~\ref{lem:int}, the functions
$\textup{exp}(x)$, $\log(x)$ have
the natural base $e=2.718\cdots$.

\begin{lemma}\label{lem:omMinus}
  Suppose $n\ge0$, $q>1$ and $\omega_{-}(n,q)=\prod_{i=1}^n(1+q^{-i})$. Then
  \[
  \frac{1-q^{-(n+1)}}{1-q^{-1}} \le \omega_{-}(n,q)
  \le \textup{exp}\left(\frac{q^{-1}(1-q^{-n})}{1-q^{-1}}\right)
  \quad\textup{and}\quad
  1-q^{-1}\le \omega_{-}(n,-q) \le 1.
  \]
\end{lemma}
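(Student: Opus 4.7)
The plan is to establish the four inequalities separately, each via an elementary manipulation of the product $\omega_{-}(n,q)=\prod_{i=1}^n(1+q^{-i})$ or its signed variant. For the lower bound $\frac{1-q^{-(n+1)}}{1-q^{-1}}\le\omega_{-}(n,q)$, I would expand
\[
\prod_{i=1}^n (1+q^{-i}) = \sum_{S\subseteq\{1,\dots,n\}} q^{-\sum_{i\in S} i},
\]
a sum of strictly positive monomials, and retain only the contributions from $S=\emptyset$ and from the singletons $S=\{k\}$ for $1\le k\le n$; these sum to the geometric series $\sum_{k=0}^n q^{-k}=\frac{1-q^{-(n+1)}}{1-q^{-1}}$, while the remaining monomials are nonnegative. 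The matching upper bound follows from $1+x\le\textup{exp}(x)$ applied termwise, with the resulting exponents summing again via a finite geometric series.

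For the upper bound $\omega_{-}(n,-q)\le 1$, I would pair consecutive factors from the start and compute
\[
(1+(-q)^{-(2k-1)})(1+(-q)^{-2k})=(1-q^{-(2k-1)})(1+q^{-2k})=1-q^{-2k}(q-1)-q^{-(4k-1)},
\]
which is strictly less than $1$ whenever $q>1$; if $n$ is odd, the leftover factor $1-q^{-n}$ is likewise at most $1$, so the product is bounded above by $1$.

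For the lower bound $\omega_{-}(n,-q)\ge 1-q^{-1}$, the cases $n\in\{0,1\}$ are immediate, so assume $n\ge 2$ and factor out the initial term $(1-q^{-1})$; it then suffices to show $\prod_{i=2}^n(1+(-q)^{-i})\ge 1$. I would pair in a shifted manner, computing
\[
(1+(-q)^{-2k})(1+(-q)^{-(2k+1)})=(1+q^{-2k})(1-q^{-(2k+1)})=1+q^{-(2k+1)}(q-1-q^{-2k}),
\]
which is $\ge 1$ precisely when $q^{2k}(q-1)\ge 1$. For $k\ge 1$ and $q\ge 2$ (the prime-power regime relevant throughout the paper) this holds, and any leftover factor $(1+q^{-n})$ arising when $n$ is even already satisfies $\ge 1$.

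The delicate step is this last one: the pairing $(1+q^{-2k})(1-q^{-(2k+1)})\ge 1$ requires $q^{2k}(q-1)\ge 1$, which does fail as $q\to 1^+$, so the bound $\omega_{-}(n,-q)\ge 1-q^{-1}$ genuinely uses that the $q$ appearing in this paper is an integer prime power. The first three bounds are valid for every real $q>1$.
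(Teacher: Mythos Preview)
Your argument is essentially identical to the paper's: the first two bounds follow from $1+\sum_{i} a_i\le\prod_{i}(1+a_i)\le\exp\bigl(\sum_{i} a_i\bigr)$ with $a_i=q^{-i}$ (your subset-expansion is exactly the left inequality), and the last two follow from the same consecutive-factor pairings $(1-q^{-(2k-1)})(1+q^{-2k})<1<(1+q^{-2k})(1-q^{-(2k+1)})$ that the paper records. You are right to flag that the final lower bound needs $q\ge 2$ rather than merely $q>1$; the paper's own proof carries the same hidden restriction, since its right-hand pairing inequality is equivalent to your condition $q^{2k}(q-1)>1$.
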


\begin{proof}
  The inequalities follow from 
  $1+\sum_{i=1}^n a_i\le\prod_{i=1}^n(1+a_i)\le \textup{exp}(\sum_{i=1}^n a_i)$,
  where each $a_i$ is positive, and
  $(1-q^{-(2i-1)})(1+q^{-2i})<1<(1+q^{-2i})(1-q^{-(2i+1)})$.
\end{proof}

\begin{lemma}\label{lem:int}
  Suppose $a,b\in\Z$ and $q\in\R$ where $1\le a\le b$ and $q>1$. Then
  \begin{enumerate}[{\rm (a)}]
    \item 
      $\displaystyle 1-\frac{q^{-a}}{\log(q)}
      < 1-\frac{q^{-a}-q^{-b}}{\log(q)}
      \le \frac{\omega(b,q)}{\omega(a,q)}\le 1$, and
    \item 
      $\displaystyle 1 - q^{-2}\le 1 - q^{-a-1}\le\frac{\omega(b,-q)}{\omega(a,-q)}\le 1 + q^{-a-1}\le 1 + q^{-2}$.
  \end{enumerate}
\end{lemma}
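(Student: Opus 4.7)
For part (a), my plan is to rewrite the ratio as a telescoping product $\omega(b,q)/\omega(a,q) = \prod_{i=a+1}^{b}(1-q^{-i})$. The upper bound by $1$ is immediate since each factor lies in $(0,1)$. For the lower bound, I would apply the Weierstrass inequality $\prod_{i=a+1}^{b}(1-q^{-i}) \ge 1 - \sum_{i=a+1}^{b} q^{-i}$, and then compare the sum to an integral: since $x \mapsto q^{-x}$ is decreasing, $q^{-i} \le \int_{i-1}^{i} q^{-x}\,dx$, so
\[
\sum_{i=a+1}^{b} q^{-i} \;\le\; \int_{a}^{b} q^{-x}\,dx \;=\; \frac{q^{-a}-q^{-b}}{\log(q)}.
\]
The outermost strict inequality in the claimed chain follows since $q^{-b}/\log(q) > 0$.

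For part (b), I would first note that the outer inequalities $1-q^{-2} \le 1-q^{-a-1}$ and $1+q^{-a-1} \le 1+q^{-2}$ are immediate from $a \ge 1$, so only the middle estimates require work. The key is to lean on Lemma~\ref{lem:omegabounds}, which says that the even- and odd-indexed subsequences of $\omega(k,-q)$ approach $\omega(\infty,-q)$ monotonically from below and above, respectively, with $\omega(2m,-q) < \omega(\infty,-q) < \omega(2m+1,-q)$. I would then split on the parity of $a$. If $a$ is even, these monotonicity statements together give the sandwich $\omega(a,-q) \le \omega(b,-q) \le \omega(a+1,-q)$ (regardless of the parity of $b$), whence
\[
1 \;\le\; \frac{\omega(b,-q)}{\omega(a,-q)} \;\le\; \frac{\omega(a+1,-q)}{\omega(a,-q)} \;=\; 1-(-q)^{-(a+1)} \;=\; 1+q^{-a-1}.
\]
The case $a$ odd is symmetric, yielding $\omega(a+1,-q) \le \omega(b,-q) \le \omega(a,-q)$ and hence $1-q^{-a-1} \le \omega(b,-q)/\omega(a,-q) \le 1$. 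Combining both cases gives the desired two-sided bound.

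Neither part involves a serious obstacle. The only conceptual subtlety is in (b), where one must recognise that the alternating-sign product $\prod_{i=a+1}^{b}(1-(-q)^{-i})$ is best controlled not by direct expansion (which would leave cross terms and yield bounds off by a factor like $\log q$), but by leveraging the oscillatory convergence of the sequence $\omega(k,-q)$ already recorded in Lemma~\ref{lem:omegabounds}: the partial products sit below $\omega(\infty,-q)$ or above it depending on parity, so $\omega(b,-q)$ is automatically squeezed between $\omega(a,-q)$ and $\omega(a+1,-q)$ for every $b\ge a$, and the ratio by $\omega(a,-q)$ collapses to a single factor.
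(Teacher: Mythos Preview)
Your proof of part~(a) is identical to the paper's. For part~(b) your argument is correct but takes a different route from the paper. The paper groups the factors of $\prod_{i=a+1}^{b}(1-(-q)^{-i})$ into consecutive pairs $\zeta_n=(1-(-q)^{-(n-1)})(1-(-q)^{-n})$, invokes an external reference (\cite{Forms}*{Lemma~4.2(a)}) for the fact that $\zeta_n<1$ when $n$ is odd and $\zeta_n>1$ when $n$ is even, and then writes the product as $A\cdot(\prod\zeta_n)\cdot B$ with boundary factors $A,B$ depending on the parities of both $a$ and $b$. Your approach instead appeals only to the oscillatory monotonicity already recorded in Lemma~\ref{lem:omegabounds}: since the even-indexed values $\omega(2m,-q)$ increase to $\omega(\infty,-q)$ and the odd-indexed values $\omega(2m+1,-q)$ decrease to it, every $\omega(b,-q)$ with $b\ge a$ lies between $\omega(a,-q)$ and $\omega(a+1,-q)$, and the ratio collapses to the single factor $1-(-q)^{-(a+1)}$. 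This is a genuine simplification: it is self-contained (no appeal to~\cite{Forms}), it needs a case split only on the parity of $a$ rather than of both $a$ and $b$, and it makes transparent why the bound involves $q^{-a-1}$ rather than some other exponent. The paper's pairing argument has the mild advantage of being more explicit about which factors push the product up or down, but your route is cleaner.
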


\begin{proof}
  (a) The upper bound of 1 is immediate, and the other bounds follows from
  \begin{align*}
    \frac{\omega(b,q)}{\omega(a,q)} & = 
    \prod_{i=a+1}^b(1-q^{-i})  
    \ge 1 - \sum_{i=a+1}^b q^{-i}
     \ge 1 - \int_{a}^{b} q^{-x} dx 
     = 1 - \frac{q^{-a}-q^{-b}}{\log(q)}
        > 1 - \frac{q^{-a}}{\log(q)}.
  \end{align*}

  (b) Define $\zeta_n\coloneq (1-(-q)^{-(n-1)})(1-(-q)^{-n})$.
  First consider the upper bounds. We use the fact that
  $\zeta_n < 1$ for $n$ odd by \cite{Forms}*{Lemma~4.2(a)}. Write
 \begin{align*}
   \frac{\omega(b,-q)}{\omega(a,-q)} &= 
   \prod_{i=a+1}^b(1-(-q)^{-i})= A\cdot \left(\prod \zeta_n\right) \cdot B,
 \end{align*}
 where the product ranges over all odd $n$ with $a+2 \le n \le b$,
 and $A = 1$ if $a$ is odd and $A= 1+q^{-a-1}$ if $a$ is even,
 and $B = 1$ if $b$ is odd and $B=1-q^{-b}$ if $b$ is even.
 As $\zeta_n <1$ for $n$ odd, we have
 $A\cdot \left(\prod \zeta_n\right) \cdot B \le  A \cdot B
 \le A \le 1+q^{-a-1}\le 1 + q^{-2}$. 

 For the the lower bounds we use the fact that
 $\zeta_n > 1$ for $n$ even by \cite{Forms}*{Lemma~4.2(a)}:
  \begin{align*}
\frac{\omega(b,-q)}{\omega(a,-q)} &= 
\prod_{i=a+1}^b(1-(-q)^{-i})
= A\cdot \left(\prod \zeta_n\right) \cdot B,
 \end{align*}
 where the product ranges over all even $n$ with $a+2 \le n \le b$
 and $A = 1$ if $a$ is even and $A= 1-q^{-a-1}$ if $a$ is odd
 and $B = 1$ if $b$ is even and $B=1+q^{-b}$ if $b$ is odd.
 As $\zeta_n >1$ for $n$ even, we have
 $A\cdot (\prod \zeta_n)\cdot B \ge  A\cdot B \ge A \ge 1-q^{-a-1}\ge 1 - q^{-2}.$
\end{proof}

\begin{lemma}\label{lem:omegabound}
  Let $m, a, b$ be positive integers with $m > a + b$. Then
  \[
  \frac{\omega(m-a,q) \omega(m-b,q)}{\omega(m-a-b,q) \omega(m,q)}
  > 1-\frac{1}{q\log(q)}. 
  \]
\end{lemma}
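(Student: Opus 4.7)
The plan is to rewrite the left-hand ratio as a single product of $b$ factors just below $1$ and then apply the Weierstrass/Bernoulli inequality $\prod_{k}(1-x_k)\ge 1-\sum_{k}x_k$. Telescoping gives
\begin{equation*}
\frac{\omega(m-a,q)\,\omega(m-b,q)}{\omega(m-a-b,q)\,\omega(m,q)}
=\frac{\omega(m-a,q)/\omega(m-a-b,q)}{\omega(m,q)/\omega(m-b,q)}
=\prod_{k=1}^{b}\frac{1-q^{-(s+k)}}{1-q^{-(s+k+a)}},
\end{equation*}
where $s\coloneq m-a-b\ge 1$ by hypothesis. Each factor is of the form $1-x_k$ with
\begin{equation*}
x_k=\frac{q^{-(s+k)}(1-q^{-a})}{1-q^{-(s+k+a)}}\in(0,1).
\end{equation*}

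The key observation is the form of the numerator: writing the $k$th factor this way isolates the quantity $1-q^{-a}$, which since $s+k\ge 1$ is strictly smaller than $1-q^{-(s+k+a)}$, giving the crude bound $x_k<q^{-(s+k)}$. From here I mimic the integral comparison used in Lemma~\ref{lem:int}(a) for the decreasing function $q^{-x}$:
\begin{equation*}
\sum_{k=1}^{b}x_k<\sum_{k=1}^{b}q^{-(s+k)}\le\int_{s}^{s+b}q^{-x}\,dx=\frac{q^{-s}-q^{-(s+b)}}{\log q}<\frac{q^{-s}}{\log q}\le\frac{1}{q\log q},
\end{equation*}
the last step using $s\ge 1$. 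Combining with Bernoulli's inequality yields $R\ge 1-\sum_k x_k>1-1/(q\log q)$, as required.

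The only real obstacle is the initial algebraic rearrangement: once the factor $1-q^{-a}$ is split off in each $x_k$, the replacement $x_k<q^{-(s+k)}$ absorbs the $a$-dependence cleanly and leaves enough slack to recover the $1/(q\log q)$ term from the standard monotone integral bound already used in Lemma~\ref{lem:int}(a). No tools beyond Bernoulli's inequality and that integral comparison are needed, and the proof makes strict use of the hypothesis $m>a+b$ only through $s\ge 1$.
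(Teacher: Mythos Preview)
Your proof is correct. The telescoping identity, the bound $x_k<q^{-(s+k)}$ via $1-q^{-a}<1-q^{-(s+k+a)}$, the integral comparison, and the Weierstrass inequality all check out, and the strictness of the final bound is preserved.

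Your route differs from the paper's. The paper treats the two ratios $\omega(m-a,q)/\omega(m-a-b,q)$ and $\omega(m-b,q)/\omega(m,q)$ separately, bounding the second crudely by $1$ and the first via Lemma~\ref{lem:int}(a); this forces a case split according to whether $m=a+b+1$ or $m>a+b+1$, the boundary case being handled by an appeal to the bound $\omega(\infty,q)>1-q^{-1}-q^{-2}+q^{-5}$ from Lemma~\ref{lem:omegabounds}. By contrast, you pair up the factors of the two ratios into a single product $\prod_{k=1}^{b}(1-x_k)$ before estimating. This pairing is what lets you bound $x_k<q^{-(s+k)}$ uniformly and avoid the case distinction altogether; it also never needs the $\omega(\infty,q)$ estimate. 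Your argument is thus slightly more economical, while the paper's version stays closer to the statement of Lemma~\ref{lem:int}(a) as a black box.
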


\begin{proof}
  Suppose first that $m=a+b+1$. It follows from Lemma~\ref{lem:omegabounds} that
  \begin{align*}
  \frac{\omega(m-a,q) \omega(m-b,q)}{\omega(m-a-b,q) \omega(m,q)}
  &= \frac{\omega(b+1,q)\omega(a+1,q) }{\omega(1,q) \omega(a+b+1,q)}\\
  &\ge \frac{\omega(b+1,q)}{\omega(1,q)} >
  \frac{ \omega(\infty,q)}{\omega(1,q)} > \frac{1-q^{-1}-q^{-2}+q^{-5}}{1-q^{-1}}.
  \end{align*}
  Moreover,  $ \frac{1-q^{-1}-q^{-2}+q^{-5}}{1-q^{-1}} \ge  1-\frac{1}{q\log(q)}$
  since  $q-1 \ge \log(q)$, so the lower bound holds.

  Finally, if $m > a+b+1$ then,
  using Lemma~\ref{lem:int}(a), we obtain
  \begin{align*}
    \frac{\omega(m-a,q)}{\omega(m-a-b,q)}
    \ge 1-\frac{1}{q^{m-a-b}\log(q)}
        \ge 1-\frac{1}{q^{2}\log(q)}.
  \end{align*}
  Moreover, again by Lemma~\ref{lem:int}(a) we obtain
  $\frac{\omega(m-b,q)}{\omega(m,q)} \ge 1$ and thus 
  \[ \frac{\omega(m-a,q) \omega(m-b,q)}{\omega(m-a-b,q) \omega(m,q)}
    \ge 1-\frac{1}{q^{2}\log(q)}
    > 1-\frac{1}{q\log(q)}.\qedhere
  \]
\end{proof}

\section{Transitioning from \texorpdfstring{$V$}{}
  to \texorpdfstring{$W$}{} and proving the bounds}\label{sec:proofs}

The goal of this section is to reduce from $V$ to the $(e+e')$-dimensional
subspace
$W$ in Proposition~\ref{p:reduce}, so we can apply
\cite{Forms}*{Theorem~1.1}. In Subsections~\ref{ss:Sp},~\ref{ss:U}
and \ref{ss:O} we shall find lower bounds for the ratios
\[
  \frac{|\Binom{V}{e+e'}^\bSp|\cdot |\Binom{W}{e}^\bSp|\cdot |\Binom{W}{e'}^\bSp|}
       {|\Binom{V}{e}^\bSp|\cdot |\Binom{V}{e'}^\bSp|}, \quad
  \frac{|\Binom{V}{e+e'}^\bU|\cdot |\Binom{W}{e}^\bU|\cdot |\Binom{W}{e'}^\bU|}
       {|\Binom{V}{e}^\bU|\cdot |\Binom{V}{e'}^\bU|}, \quad
  \frac{|\Binom{V}{e+e'}^\eps_\tau|\cdot |\Binom{W}{e}^\tau_\sigma|\cdot |\Binom{W}{e'}^\tau_{\sigma'}|}
       {|\Binom{V}{e}^\eps_\sigma|\cdot |\Binom{V}{e'}^\eps_{\sigma'}|}. 
\]
This allows us to apply Proposition~\ref{p:reduce}(a) in the symplectic
and unitary cases and Proposition~\ref{p:reduce}(b) in the orthogonal case.

We paraphrase \cite{Forms}*{Theorem~1.1} in
Theorems~\ref{T:Forms} and~\ref{T:orthobound} in a form that is useful for us.

\begin{theorem}[\cite{Forms}*{Theorem~1.1}]\label{T:Forms}
  Let $e, e'$ be positive integers with both even in part~{\rm (a)}.
  \begin{enumerate}[{\rm (a)}]
  \item Let $V=(\F_q)^{e+e'}$ be a non-degenerate symplectic space. Then the
    proportion of pairs $(U,U')$ of non-degenerate subspaces of dimensions
    $e$ and $e'$ respectively that satisfy $U\cap U'=0$,
    and hence $V=U\oplus U'$,
    is at least $1-\frac{5}{3q}>0$ for all $q\ge2$.
  \item Let $V=(\F_{q^2})^{e+e'}$ be a non-degenerate unitary space.
    If $(e,e',q)\ne(1,1,2)$, then the
    proportion of pairs $(U,U')$ of non-degenerate subspaces of dimensions
    $e$ and $e'$ respectively that satisfy $U\cap U'=0$,
    and hence $V=U\oplus U'$, is at least $1-\frac{9}{5q^2}>0$.
  \end{enumerate}
\end{theorem}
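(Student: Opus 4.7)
The plan is to reduce the estimate to a single-subspace count and then bound the proportion of ``bad'' complements. Fix a reference $U \in \Binom{V}{e}^\bX$. Since $G^\bX$ is transitive on $\Binom{V}{e}^\bX$ in both the symplectic and unitary cases (Section~\ref{s:notation}(c)), Lemma~\ref{l:dllo2} reduces the required proportion to
\[
\rho \;=\; \frac{|\{U' \in \Binom{V}{e'}^\bX : U \cap U' = 0\}|}{|\Binom{V}{e'}^\bX|},
\]
and because $\dim V = e+e'$, the condition $U \cap U' = 0$ is equivalent to $V = U \oplus U'$. The denominator is given in closed form by Proposition~\ref{prop:pA}.

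To estimate the numerator I would instead bound the complementary count $B(U) := |\{U' \in \Binom{V}{e'}^\bX : U \cap U' \ne 0\}|$ by double-counting pairs $(v, U')$ with $0 \ne v \in U \cap U'$:
\[
B(U) \;\le\; \frac{1}{q-1} \sum_{0 \ne v \in U} n(v),
\]
where $n(v)$ is the number of non-degenerate $e'$-subspaces of $V$ containing $v$. By transitivity of $G^\bX$ on nonzero vectors within each form-type (all nonzero vectors in the symplectic case; isotropic and anisotropic separately in the unitary case), $n(v)$ depends only on the orbit of $v$, and each such count can be reduced to counting non-degenerate subspaces in a smaller classical space: isotropic $v$ gives a count inside $v^\perp/\langle v\rangle$ (of dimension $d-2$), while anisotropic $v$ (unitary only) gives a count inside $v^\perp$ (of dimension $d-1$). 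Both are again handled by Proposition~\ref{prop:pA}.

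Inserting these closed-form expressions into the ratio and simplifying by means of the rational-function bounds in Section~\ref{sec:nt}---in particular Lemmas~\ref{lem:omegabounds}, \ref{lem:binombounds}, and~\ref{lem:int}---one obtains $B(U)/|\Binom{V}{e'}^\bX| \le c/q$ in the symplectic case and $\le c/q^2$ in the unitary case. The main obstacle is squeezing the constants down to $5/3$ and $9/5$: the crude single-line bound is off by an $O(1)$ factor, so some sharpening is needed, either by stratifying $B(U)$ according to $\dim(U \cap U')$ and applying inclusion-exclusion on $q$-binomial sums, or by parameterizing complements $U'$ of $U$ as graphs of linear (resp.\ semilinear) maps $\phi : U^\perp \to U$ and analyzing directly when the Gram matrix $J_{U^\perp} + \phi^* J_U \phi$ is non-singular; the second route turns the count into a well-studied problem on non-degenerate quadratic/hermitian matrices. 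Finally, I would verify the excluded unitary case $(e,e',q) = (1,1,2)$ by hand: a 2-dimensional Hermitian space over $\F_4$ has exactly two non-degenerate lines, so the true proportion is $1/2$, which just fails $1 - 9/(5q^2) = 11/20$ and accounts for the exception.
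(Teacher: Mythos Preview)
The paper does not prove Theorem~\ref{T:Forms}; it is quoted verbatim from~\cite{Forms}*{Theorem~1.1} and used as a black box in Section~\ref{sec:proofs}. So there is no in-paper argument to compare your sketch against, and your proposal must stand on its own. As written it does not: you explicitly flag that the crude double-counting bound misses the constants $5/3$ and $9/5$ by an $O(1)$ factor, and you only gesture at two possible sharpenings (stratifying by $\dim(U\cap U')$ with inclusion--exclusion, or the graph-of-$\phi$ parametrisation) without carrying either out. Since the entire content of the theorem is the specific constants, that is the gap.

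Two technical points would also need repair before either route could work. First, the normalising factor in your double count should be $1/(|\F|-1)$, not $1/(q-1)$; in the unitary case $|\F|=q^2$, and writing $q-1$ throws away precisely the extra factor of $q$ you need for a $c/q^2$ bound. Second, the reduction ``isotropic $v$ gives a count inside $v^\perp/\langle v\rangle$'' is not correct as stated: a non-degenerate $e'$-subspace $U'$ containing an isotropic $v$ is never contained in $v^\perp$ (non-degeneracy forces some $w\in U'$ with $\beta(v,w)\ne 0$), so $U'\cap v^\perp$ has codimension~$1$ in $U'$ and the passage to the quotient does not simply reproduce a non-degenerate $(e'-2)$-subspace count. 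Your hand check of the excluded case $(e,e',q)=(1,1,2)$ is correct: there are exactly two anisotropic lines in a hermitian plane over $\F_4$, and the true proportion $1/2$ is indeed below $1-9/20=11/20$.
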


\subsection{The symplectic case}\label{ss:Sp}
In this subsection we establish the following lower bound.

\begin{theorem}\label{T:Spbound}
  Let $d,e, e'$ be positive even integers where $d\ge e+e'$.
  Let~$V=(\F_q)^d$ be a non-degenerate symplectic $d$-space over $\F_q$.
  Set $\cU=\Binom{V}{e}$ and $\cU'=\Binom{V}{e'}$. Then
  \[
    \rho(\bSp,V,\cU,\cU')
    > \left(1-\frac{1}{2q^2\log(q)}\right)\left(1-\frac{5}{3q}\right)
    > 1-\frac{7}{4q}\qquad\textup{for all $q\ge2$.}
  \]
\end{theorem}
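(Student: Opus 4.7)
The plan is to reduce from $V$ to an $(e{+}e')$-dimensional non-degenerate subspace $W\subseteq V$ via Proposition~\ref{p:reduce}(a) and then invoke Theorem~\ref{T:Forms}(a). The base case $d=e+e'$ is immediate: then $V=W$ and Theorem~\ref{T:Forms}(a) alone gives $\rho(\bSp,V,\cU,\cU')\ge 1-\tfrac{5}{3q}$, which exceeds $\bigl(1-\tfrac{1}{2q^2\log q}\bigr)\bigl(1-\tfrac{5}{3q}\bigr)$ since the first factor on the right is strictly less than $1$.

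Now assume $d>e+e'$. Proposition~\ref{p:reduce}(a) gives
$\rho(\bSp,V,\cU,\cU') = R \cdot \rho\bigl(\bSp,W,\Binom{W}{e}^\bSp,\Binom{W}{e'}^\bSp\bigr)$,
where $R$ is the ratio of cardinalities displayed at the start of Section~\ref{sec:proofs}. Substituting the explicit formulas from Proposition~\ref{prop:pA}(b) with $a=e/2$, $b=e'/2$, $m=d/2$, the powers of $q$ cancel because $(e+e')(d-e-e')+2ee'=e(d-e)+e'(d-e')$, and after cancelling the obvious $\omega$-terms I expect to obtain
\[
R \;=\; \binomq{a+b}{a}_{\!q^2} \cdot \frac{\omega(m-a,q^2)\,\omega(m-b,q^2)}{\omega(m-a-b,q^2)\,\omega(m,q^2)}.
\]
By Lemma~\ref{lem:binombounds} the Gaussian binomial factor is at least~$1$, and since $m>a+b$, Lemma~\ref{lem:omegabound} applied with $q$ replaced by $q^2$ bounds the remaining factor below by $1-\tfrac{1}{q^2\log(q^2)}=1-\tfrac{1}{2q^2\log q}$. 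Combining this with the bound from Theorem~\ref{T:Forms}(a) yields the first asserted inequality.

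The second inequality is a numerical check: expanding $\bigl(1-\tfrac{1}{2q^2\log q}\bigr)\bigl(1-\tfrac{5}{3q}\bigr)>1-\tfrac{7}{4q}$ reduces (after clearing denominators) to $q^2\log q > 6q-10$. This is straightforward for $q\in\{2,3,4\}$ by hand and immediate for $q\ge 5$ since then $q\log q > 6$.

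The only non-mechanical step is the algebraic simplification of $R$: one must keep careful track of which $\omega$-terms survive the cancellation and recognise that the residual product has precisely the shape to which Lemma~\ref{lem:omegabound} (with $q^2$ in place of $q$) can be applied. Everything else is either a direct appeal to established results or a routine numerical verification.
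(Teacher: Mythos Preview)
Your proof is correct and follows essentially the same approach as the paper: reduce via Proposition~\ref{p:reduce}(a), simplify the ratio $R$ using Proposition~\ref{prop:pA}(b) to the product of a Gaussian binomial (bounded below by~$1$ via Lemma~\ref{lem:binombounds}) and an $\omega$-ratio (bounded via Lemma~\ref{lem:omegabound} with $q$ replaced by $q^2$), and then apply Theorem~\ref{T:Forms}(a). Your explicit separation of the base case $d=e+e'$ and your rearrangement of the numerical inequality to $q^2\log q>6q-10$ are cosmetic differences only.
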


\begin{lemma}\label{lem:symplecticbound}
    Suppose that $V=(\F_q)^d$ is a non-degenerate symplectic space, and
    $W$ is a non-degenerate $(e+e')$-dimensional subspace. Then $d,e,e'$ are
    even, $d\ge e+e'$ and
    \[
    \frac{|\Binom{V}{e+e'}^\bSp|\cdot|\Binom{W}{e}^\bSp|\cdot|\Binom{W}{e'}^\bSp|}
         {|\Binom{V}{e}^\bSp|\cdot |\Binom{V}{e'}^\bSp|}
    > 1- \frac{1}{2q^2 \log(q)}.
    \]
\end{lemma}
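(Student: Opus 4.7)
The plan is to expand each factor using Proposition~\ref{prop:pA}(b) and reduce the resulting expression to an application of Lemma~\ref{lem:omegabound}. To streamline the bookkeeping I would set $m = d/2$, $a = e/2$, and $b = e'/2$, so that $a + b \le m$ and each dimension that appears is an even multiple of one of $a$, $b$, $m$ or their sums.

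After substituting the formulas from Proposition~\ref{prop:pA}(b), the dominant $q$-powers in the numerator sum to $4(a+b)(m-a-b) + 8ab$, while those in the denominator sum to $4a(m-a) + 4b(m-b)$; a short check confirms both equal $4[(a+b)m - a^2 - b^2]$, so every power of $q$ cancels. What remains on the $\omega$-side, after the common factors $\omega(a+b,q^2)$ (introduced once in the numerator of each $|\Binom{W}{\cdot}^\bSp|$ and once in the denominator of $|\Binom{V}{e+e'}^\bSp|$) and $\omega(m,q^2)$ are arranged suitably, is
\[
\frac{\omega(m-a, q^2)\,\omega(m-b, q^2)}{\omega(m-a-b, q^2)\,\omega(m, q^2)} \,\cdot\, \frac{\omega(a+b,q^2)}{\omega(a,q^2)\,\omega(b,q^2)}.
\]

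The second factor is a Gaussian binomial coefficient in the variable $q^2$, and is at least $1$ by Lemma~\ref{lem:binombounds}. The first factor is bounded below by Lemma~\ref{lem:omegabound} applied with $q$ replaced by $q^2$, yielding
\[
1 - \frac{1}{q^2 \log(q^2)} \;=\; 1 - \frac{1}{2q^2 \log q},
\]
provided $m > a+b$, i.e.\ $d > e+e'$. Multiplying the two factors then gives the claimed inequality. The edge case $d = e+e'$ (so $W = V$) reduces trivially: the ratio equals $1$ by direct inspection, since $|\Binom{V}{e+e'}^\bSp|=1$ and $\Binom{W}{e}^\bSp=\Binom{V}{e}^\bSp$, and similarly for $e'$.

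The main obstacle is purely algebraic bookkeeping: one must verify that after the $q$-powers cancel, the residual $\omega$-ratio is in precisely the form required by Lemma~\ref{lem:omegabound}, and that the leftover factor involving $\omega(a+b,q^2)$ assembles into a Gaussian binomial coefficient rather than something more delicate. No new combinatorial or analytic input is required beyond the formulas of Section~\ref{sec:nondegsub} and the bounds of Section~\ref{sec:nt}.
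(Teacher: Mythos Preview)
Your proof is correct and follows essentially the same approach as the paper: expand via Proposition~\ref{prop:pA}(b), cancel the $q$-powers, factor the residual $\omega$-ratio as a Gaussian binomial (bounded below by~$1$ via Lemma~\ref{lem:binombounds}) times a quotient handled by Lemma~\ref{lem:omegabound} with $q\mapsto q^2$, and treat $d=e+e'$ separately. The only cosmetic difference is your substitution $m=d/2,\ a=e/2,\ b=e'/2$, whereas the paper abbreviates $\omega(e/2,q^2)$ as $\omega(e)$.
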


\begin{proof}
  For the duration of this proof and, for  $e$ even,
  we abbreviate $\omega(e/2,q^2)$ by $\omega(e)$.  Repeated use
  of Proposition~\ref{prop:pA}(b) and extensive cancellation shows
  \begin{align*}
  \frac{|\Binom{V}{e+e'}|\cdot |\Binom{W}{e}|\cdot |\Binom{W}{e'}|}
       {|\Binom{V}{e}|\cdot |\Binom{V}{e'}|}
     &=
     \frac{q^{(e+e')(d-e-e')}\omega(d) }{\omega(e{+}e')\omega(d{-}e{-}e')}\cdot
     \frac{ q^{2ee'}\omega(e{+}e')^2 }{\omega(e)^2\omega(e')^2}\cdot
     \frac{\omega(e) \omega(d{-}e) }{ q^{(d-e)e}\omega(d)} \cdot
     \frac{\omega(e') \omega(d{-}e') }{ q^{(d-e')e'}\omega(d)} \\
     &=
     \frac{\omega(d)}{\omega(e{+}e')\omega(d{-}e{-}e')}\cdot
     \frac{\omega(e{+}e')^2}{\omega(e)^2\omega(e')^2}\cdot
     \frac{\omega(e) \omega(d{-}e)}{\omega(d)} \cdot
     \frac{\omega(e') \omega(d{-}e') }{\omega(d)} \\
     &=
     \frac{\omega(e+e')}{\omega(e)\omega(e')} \cdot
     \frac{\omega(d-e) \omega(d-e') }{\omega(d-e-e') \omega(d)}.
  \end{align*}
  If $d=e+e'$, then this ratio is 1 and the bound holds trivially.
  Suppose that $d>e+e'$. Then $\frac{\omega(e+e')}{\omega(e)\omega(e')} \ge 1$
  by Lemma~\ref{lem:binombounds}, and applying Lemma~\ref{lem:omegabound}
  with $q$ replaced by $q^2$ gives
  \[
  \frac{\omega(d-e) \omega(d-e')}{\omega(d-e-e')\omega(d)}
  = \frac{\omega((d-e)/2,q^2) \omega((d-e')/2,q^2)}
         {\omega((d-e-e')/2,q^2)\omega(d/2,q^2)}
  > 1- \frac{1}{2q^2 \log(q)}.\qedhere
  \]
\end{proof}

\begin{proof}[Proof of Theorem~$\ref{T:Spbound}$]
  Let $W$ be a non-degenerate $(e+e')$-subspace of $V$. Set
  $\cW\coloneq\Binom{W}{e}$ and
  $\cW'\coloneq\Binom{W}{e'}$.
  It follows from Proposition~\ref{p:reduce}(a) that 
  \[
    \rho\coloneq\rho(\bSp,V,\cU,\cU')
    = \frac{|\Binom{V}{e+e'}^\bSp|\cdot |\Binom{W}{e}^\bSp|
    \cdot|\Binom{W}{e'}^\bSp|}{|\Binom{V}{e}^\bSp|\cdot |\Binom{V}{e'}^\bSp|}
    \cdot\rho(\bSp,W,\cW,\cW').
  \]
  However, $\rho(\bSp,W,\cW,\cW')\ge1-\frac{5}{3q}$ by Theorem~\ref{T:Forms}(a).
  Therefore Lemma~\ref{lem:symplecticbound} implies that
  $\rho > \left(1-\frac{1}{2q^2\log(q)}\right)\left(1-\frac{5}{3q}\right)$.
  This proves the first inequality. For the second inequality,
  we write $\alpha=\frac{5}{3}$  and $\beta=\frac{1}{2q\log(q)}$ and aim to
  show that $(1-\frac{\alpha}{q})(1-\frac{\beta}{q})\ge1-\frac{7}{4q}$.
  This is equivalent to $\frac{7}{4} > \alpha+\beta-\frac{\alpha\beta}{q}
  =\frac{5}{3}+\frac{1}{2q\log(q)}-\frac{5}{6q^2\log(q)}$. The latter
  is true for all $q\ge2$.
\end{proof}

\subsection{The unitary case}\label{ss:U}

In this subsection we verify the following lower bound.

\begin{theorem}\label{T:Ubound}
  Suppose that $d,e, e'$ are positive integers where $d\ge e+e'$.
  Let~$V=(\F_{q^2})^d$ be a non-degenerate hermitian $d$-space.
  Set $\cU=\Binom{V}{e}$ and $\cU'=\Binom{V}{e'}$. If $q\ge2$, then
  \[
    \rho(\bU,V,\cU,\cU')
    \ge 1-\frac{1.72}{q}.  
  \]
\end{theorem}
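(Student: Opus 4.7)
The plan is to follow the same strategy as for Theorem~\ref{T:Spbound}. The first step is to apply Proposition~\ref{p:reduce}(a), which, for any fixed $W\in\Binom{V}{e+e'}^\bU$, gives
\[
\rho(\bU,V,\cU,\cU') = R\cdot\rho\bigl(\bU,W,\Binom{W}{e}^\bU,\Binom{W}{e'}^\bU\bigr),
\]
where $R$ denotes the subspace-count ratio. Theorem~\ref{T:Forms}(b) bounds the second factor by $1-\tfrac{9}{5q^2}$ provided $(e,e',q)\ne(1,1,2)$. The exceptional triple $(e,e',q)=(1,1,2)$ with $W=(\F_4)^2$ I would handle by direct enumeration: a short count shows that $V$ has exactly two non-degenerate $1$-subspaces, yielding $\rho(\bU,W,\cdot,\cdot)=\tfrac{1}{2}$.

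Next, I would compute $R$ by applying Proposition~\ref{prop:pA}(a) to each of the five binomials. The powers of $q$ cancel exactly as in the symplectic case, and after standard rearrangement using the definition~\eqref{E:binom} one obtains
\[
R \;=\; \binomqq{e+e'}{e} \cdot \frac{\omega(d-e,-q)\,\omega(d-e',-q)}{\omega(d-e-e',-q)\,\omega(d,-q)}.
\]
When $d=e+e'$ the $\omega$-ratio telescopes to $1/\binomqq{e+e'}{e}$, so $R=1$ and the bound follows at once from Theorem~\ref{T:Forms}(b). I henceforth assume $d>e+e'$.

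For the binomial factor, using the symmetry $\binomqq{n}{k}=\binomqq{n}{n-k}$ I may assume $e\le\lfloor(e+e')/2\rfloor$. By Lemma~\ref{lem:binombounds}, the worst case is $e$ odd, where
\[
\binomqq{e+e'}{e} \ge \binomqq{e+e'}{1} = \frac{1-(-q)^{-(e+e')}}{1+q^{-1}} \ge \frac{1-q^{-2}}{1+q^{-1}} = 1-q^{-1},
\]
while for $e$ even, $\binomqq{e+e'}{e}\ge 1/\omega(\infty,-q)$, which is at least $1-q^{-1}$. For the $\omega$-ratio, Lemma~\ref{lem:int}(b) gives $\omega(d-e,-q)/\omega(d-e-e',-q)\ge 1-q^{-(d-e-e')-1}\ge 1-q^{-2}$ and $\omega(d-e',-q)/\omega(d,-q)\ge 1/(1+q^{-(d-e')-1})\ge 1-q^{-2}$, so the $\omega$-ratio is at least $(1-q^{-2})^2$.

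The main obstacle will be the endgame inequality. A naive combination gives $\rho(\bU,V,\cU,\cU')\ge(1-q^{-1})(1-q^{-2})^2(1-\tfrac{9}{5q^2})$, which is tight enough for large $q$ but falls slightly short of $1-\tfrac{1.72}{q}$ when $q\in\{2,3\}$. To close this gap I would refine the analysis when $e$ is odd and $e+e'$ is small (the worst case for the binomial factor), where the $\omega$-ratio admits the closed form $(1-(-q)^{-(d-1)})/(1-(-q)^{-d})$ after cancellation and is bounded more tightly than $(1-q^{-2})^2$. For the $(1,1,2)$ exception, direct computation of $R$ (whose $\omega$-ratio is $(1-q^{-2})/(1+q^{-3})$ when $d=e+e'+1$) combined with the exact value $\rho(\bU,W,\cdot,\cdot)=\tfrac{1}{2}$ yields the required bound $\rho\ge \tfrac{1}{6}>0.14$ for the only additional case $d=3$, and larger $d$ is easier. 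Assembling these refinements should give the constant $c=1.72$ for all $q\ge 2$.
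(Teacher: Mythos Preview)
Your approach is essentially the paper's: factor via Proposition~\ref{p:reduce}(a), bound $\rho(\bU,W,\cdot,\cdot)$ by Theorem~\ref{T:Forms}(b) (treating $(e,e',q)=(1,1,2)$ separately), and bound the subspace-count ratio $R$ using Proposition~\ref{prop:pA}(a), Lemma~\ref{lem:binombounds}, and Lemma~\ref{lem:int}(b). The paper packages the bound on $R$ as Lemma~\ref{lem:unitarybound}.

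There is, however, a genuine gap in your endgame, and your proposed fix is aimed at the wrong place. The constant $1.72$ is \emph{exactly} tight at $q=3$: the paper shows
\[
\frac{(1-q^{-1})(1-q^{-2})}{1+q^{-2}}\cdot\Bigl(1-\tfrac{9}{5q^2}\Bigr)\ \ge\ 1-\frac{1.72}{q},
\]
with equality at $q=3$. You obtain the weaker bound $R\ge(1-q^{-1})(1-q^{-2})^2$ because you gratuitously relax $1/(1+q^{-2})$ to $1-q^{-2}$ in the second $\omega$-ratio. That single relaxation is what makes the inequality fail at $q=2,3$; simply keep $1/(1+q^{-2})$ and you recover the paper's bound $R\ge(1-q^{-1})(1-q^{-2})/(1+q^{-2})$, after which the final rearrangement (checking that the right side of the equivalent inequality is maximised at $q=3$ with value $43/25=1.72$) goes through uniformly for all $q\ge2$. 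No case split on ``$e$ odd and $e+e'$ small'' is needed or helpful.

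Two smaller points. For the $(e,e',q)=(1,1,2)$ exception with $d>2$, your computation for $d=3$ is correct and matches the paper's $R\ge 1/3$, but ``larger $d$ is easier'' needs a line: the paper simply bounds $\frac{(1-q^{-1})(1-(-q)^{-d+1})}{1-(-q)^{-d}}\ge\frac{(1-q^{-1})(1-q^{-2})}{1+q^{-3}}$ uniformly in $d\ge3$. For $d=e+e'$, Theorem~\ref{T:Forms}(b) excludes $(1,1,2)$; the paper instead invokes \cite{Forms}*{Theorem~4.1}, which gives $\rho\ge1-2/q^2$ for all $(e,e',q)$, whence $\rho\ge1-1/q>1-1.72/q$.
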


\begin{lemma}\label{lem:unitarybound}
  Suppose that $V=(\F_{q^2})^d$ is a non-degenerate hermitian space, and
  $W$ is a non-degenerate $(e+e')$-dimensional subspace. Then $d\ge e+e'$ and
  \[
    \frac{|\Binom{V}{e+e'}^\bU|\cdot |\Binom{W}{e}^\bU|\cdot |\Binom{W}{e'}^\bU|}
         {|\Binom{V}{e}^\bU|\cdot |\Binom{V}{e'}^\bU|}
    \ge \frac{(1-q^{-1})(1-q^{-2}) }{1+q^{-2}}.
   \] 
\end{lemma}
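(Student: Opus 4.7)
The strategy mirrors the proof of Lemma~\ref{lem:symplecticbound}, with the parameter $q$ replaced by $-q$ throughout, so one invokes the sign-sensitive estimates of Section~\ref{sec:nt} rather than their purely-positive counterparts.

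First I would substitute the formula of Proposition~\ref{prop:pA}(a) into each of the five factors in the target ratio. The exponents of $q$ cancel completely, since
\[
2(e+e')(d-e-e')+4ee'-2e(d-e)-2e'(d-e')=0,
\]
and after cancelling common $\omega(\cdot,-q)$-terms the ratio simplifies to
\[
\binomqq{e+e'}{e}\cdot\frac{\omega(d-e,-q)\,\omega(d-e',-q)}{\omega(d-e-e',-q)\,\omega(d,-q)}.
\]
If $d=e+e'$ the second factor is the reciprocal of the first and the whole expression equals $1$, so the bound holds trivially. Otherwise $d-e-e'\ge 1$, hence also $d-e\ge 1$ and $d-e'\ge 1$.

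Next, I would estimate each factor separately. Using the symmetry $\binomqq{n}{k}=\binomqq{n}{n-k}$ from~\eqref{E:binom}, assume $e\le e'$ so that $e\le \lfloor(e+e')/2\rfloor$. Lemma~\ref{lem:binombounds} then yields $\binomqq{e+e'}{e}\ge \min\{1/\omega(\infty,-q),\,\binomqq{e+e'}{1}\}$ according as $e$ is even or odd, and the explicit formula $\binomqq{e+e'}{1}=(1-(-q)^{-(e+e')})/(1+q^{-1})$ together with $\omega(\infty,-q)<1+q^{-1}$ gives, in every case, the uniform estimate $\binomqq{e+e'}{e}\ge 1-q^{-1}$ (the worst case being $e=1$, $e+e'=2$). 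For the $\omega$-quotient I split
\[
\frac{\omega(d-e,-q)}{\omega(d,-q)}\cdot\frac{\omega(d-e',-q)}{\omega(d-e-e',-q)}
\]
and apply Lemma~\ref{lem:int}(b) to each piece: the first is $\ge 1/(1+q^{-(d-e)-1})\ge 1/(1+q^{-2})$ and the second is $\ge 1-q^{-(d-e-e')-1}\ge 1-q^{-2}$.

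Multiplying the three estimates yields exactly the desired bound $\frac{(1-q^{-1})(1-q^{-2})}{1+q^{-2}}$. The only new feature compared with the symplectic argument is the sign-alternation in $\omega(n,-q)$, which forces the use of the two-sided bounds of Lemma~\ref{lem:int}(b) and a small case analysis on the parity of $e$ inside Lemma~\ref{lem:binombounds}; this parity analysis is the only point where I expect any genuine care is required, and the rest is straightforward bookkeeping.
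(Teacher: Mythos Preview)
Your proposal is correct and follows essentially the same route as the paper: both proofs substitute Proposition~\ref{prop:pA}(a), cancel to obtain $\binomqq{e+e'}{e}\cdot\frac{\omega(d-e,-q)\,\omega(d-e',-q)}{\omega(d-e-e',-q)\,\omega(d,-q)}$, bound the first factor below by $1-q^{-1}$ via Lemma~\ref{lem:binombounds}, and bound the second by $\frac{1-q^{-2}}{1+q^{-2}}$ via Lemma~\ref{lem:int}(b). The only cosmetic difference is that the paper observes directly that $\binomqq{n}{1}$ is the global minimum of $\binomqq{n}{k}$ over all $k$ (since the even-indexed values all exceed $1/\omega(\infty,-q)$, which in turn exceeds every odd-indexed value), so your separate parity discussion, while correct, is not needed.
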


\begin{proof}
  The inequality $d\ge e+e'$ is clear.
  For the duration of this proof we abbreviate
  $\omega(e,-q)$ by $\omega(e)$ and we suppress the superscript ${}^\bU$
  in notation such as $\Binom{V}{e}^\bU$. Repeated use
  of Proposition~\ref{prop:pA}(a) and extensive cancellation shows
  \begin{align*}
    \frac{|\Binom{V}{e+e'}|\cdot |\Binom{W}{e}|\cdot |\Binom{W}{e'}|}
       {|\Binom{V}{e}|\cdot |\Binom{V}{e'}|}
       &= \frac{q^{2(e+e')(d-e-e')}\omega(d)}
               {\omega(e{+}e')\omega(d{-}e{-}e')}\cdot
       \frac{q^{4ee'}\omega(e+e')^2}
            {\omega(e)^2\omega(e')^2}\cdot
       \frac{\omega(e) \omega(d{-}e)}
            {q^{2(d-e)e}\omega(d)} \cdot
       \frac{\omega(e') \omega(d{-}e')}{  q^{2(d-e')e'}\omega(d)} \\
       &=
       \frac{\omega(e+e')}{\omega(e)\omega(e')} \cdot
       \frac{\omega(d-e) \omega(d-e')}{\omega(d-e-e')\omega(d)}.
  \end{align*}
  If $d=e+e'$, then this ratio is 1 and the bound holds trivially.
  Suppose that $d>e+e'$. Then
  $\frac{\omega(e+e')}{\omega(e)\omega(e')}
  \ge \frac{1-(-q)^{-(e+e')}}{1+q^{-1}}
  \ge \frac{1-q^{-2}}{1+q^{-1}}$
  by Lemma~\ref{lem:binombounds}. Applying the upper and
  lower bounds from Lemma~\ref{lem:int}(b) gives
  \[
    \frac{\omega(d-e) \omega(d-e')}{\omega(d-e-e')  \omega(d)}
    \ge \frac{1-q^{-2}}{1+q^{-2}}.
  \]
  In summary, we have
  $\frac{|\Binom{V}{e+e'}|\cdot |\Binom{W}{e}|\cdot |\Binom{W}{e'}|}
        {|\Binom{V}{e}|\cdot |\Binom{V}{e'}|}
  \ge \frac{1-q^{-2}}{1+q^{-1}}\cdot \frac{1-q^{-2}}{1+q^{-2}}
  =\frac{(1-q^{-1})(1-q^{-2}) }{1+q^{-2}}$ as claimed.
\end{proof}

\begin{proof}[Proof of Theorem~$\ref{T:Ubound}$]
  The bound holds when $d=e+e'$ as~\cite{Forms}*{Theorem~4.1} shows that
  \[
    \rho(\bU,V,\cU,\cU')\ge 1-\frac{2}{q^2}\ge1-\frac{1}{q}>1-\frac{1.72}{q}.
  \]
    
  Suppose, henceforth that $d>e+e'$.  
  Let $W$ be a non-degenerate $(e+e')$-subspace of $V$.
  We suppress the superscript ${}^\bU$ in notation such as $\Binom{V}{e}^\bU$.
  Set $\cW\coloneq\Binom{W}{e}$ and $\cW'\coloneq\Binom{W}{e'}$.
  It follows from Proposition~\ref{p:reduce}(a) that
  \begin{equation}\label{E:Ox}
    \rho(\bU,V,\cU,\cU') = \frac{|\Binom{V}{e+e'}|\cdot |\Binom{W}{e}|
      \cdot|\Binom{W}{e'}|}{|\Binom{V}{e}|\cdot |\Binom{V}{e'}|}
    \cdot\rho(\bU,W,\cW,\cW').
  \end{equation}

  Suppose first that $(e,e',q)=(1,1,2)$. It follows from the first
  displayed formula in the proof of Lemma~\ref{lem:unitarybound} that
  \begin{align*}
    \frac{|\Binom{V}{e+e'}|\cdot |\Binom{W}{e}|
      \cdot|\Binom{W}{e'}|}{|\Binom{V}{e}|\cdot |\Binom{V}{e'}|}
    &=\frac{\omega(2,-q)}{\omega(1,-q)^2} \cdot
    \frac{\omega(d-1,-q)^2}{\omega(d-2,-q)\omega(d,-q)}
    =\frac{1-q^{-2}}{1+q^{-1}}\cdot
    \frac{1-(-q)^{-d+1}}{1-(-q)^{-d}}\\
    &=\frac{(1-q^{-1})(1-(-q)^{-d+1})}{1-(-q)^{-d}}
    \ge \frac{(1-q^{-1})(1-q^{-2})}{1+q^{-3}}
    =\frac{1}{3}.    
  \end{align*}
  However $\rho(\bU,W,\cW,\cW')\ge1-\frac{2}{q^2}=\frac{1}{2}$
  by~\cite{Forms}*{Theorem~4.1}. The result now follows from~\eqref{E:Ox} as
  $\rho(\bU,V,\cU,\cU')\ge\frac{1}{3}\cdot\frac{1}{2}
  =\frac{1}{6}>1-\frac{1.72}{2}$.

  Suppose next that $(e,e',q)\ne(1,1,2)$. 
  Then $\rho(\bU,W,\cW,\cW')\ge1-\frac{9}{5q^2}$ by~Theorem~\ref{T:Forms}(b).
  Thus~\eqref{E:Ox} and Lemma~\ref{lem:unitarybound} show
  $\rho(\bU,V,\cU,\cU')\ge\frac{(1-q^{-1})(1-q^{-2})(1-\frac{9}{5}q^{-2}) }{1+q^{-2}}$.
  To prove the stated bound, we must prove that
  $\frac{(1-q^{-1})(1-q^{-2})(1-\frac{9}{5}q^{-2}) }{1+q^{-2}}\ge1-\frac{1.72}{q}$.
  This, in turn, is equivalent to showing
  \[
  1.72
  \ge\frac{q\left[(1+q^{-2})-(1-q^{-1})(1-q^{-2})(1-\frac{9}{5}q^{-2})\right]}{1+q^{-2}}
  =\frac{1+\frac{19}{5} q^{-1}-\frac{14}{5}q^{-2}-\frac{9}{5}q^{-3}+\frac{9}{5}q^{-4}}{1+q^{-2}}.
  \]
  holds for all $q\ge2$.
  The right side approaches 1 quite rapidly as $q\to\infty$, so the upper
  bound holds
  if~$q$ is `large'. It is not hard to see that the right side attains
  a maximum value of $\frac{43}{25}=1.72$ when $q=3$. This completes the proof.
\end{proof}

\subsection{The orthogonal case}\label{ss:O}

We use Theorem~1.1 of~\cite{Forms} which we paraphrase below in the context
we need.

\begin{theorem}\label{T:O}
  Suppose that $e,e'$ are positive even integers and $W=(\F_q)^{e+e'}$ is
  a non-degenerate orthogonal space of type $\pm$.  If $q\ge3$, then
  the proportion of pairs $(U,U')$ where $U$ is a
  non-degenerate $e$-subspace of $W$ of type~$\sigma$ and $U'$ is a
  non-degenerate $e'$-subspace of $W$ of type~$\sigma'$, which satisfy
  $U\cap U'=0$, is at least  $1-c/q$  where $c=43/16$.
\end{theorem}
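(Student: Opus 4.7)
Because Theorem~\ref{T:O} is explicitly stated as a paraphrase of \cite{Forms}*{Theorem~1.1}, my plan is to obtain it by specialization. First I would verify that our hypotheses sit inside those of the cited theorem: $W$ is non-degenerate orthogonal of type $\pm$, $e$ and $e'$ are positive even integers, and $\dim W = e + e'$, so that the condition $U \cap U' = 0$ is equivalent to $W = U \oplus U'$, placing us in the direct-sum regime treated there. Then I would locate in the table of \cite{Forms} the row corresponding to orthogonal type with both dimensions even and $q \ge 3$, and read off the constant $c = 43/16$.

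If one wished to give an intrinsic proof, the natural strategy is as follows. Fix $U$ of type $\sigma$ and write $W = U \oPerp U^\perp$, noting that $U^\perp$ has type $\eps\sigma$ where $\eps$ is the type of $W$. Parametrize the complements of $U$ in $W$ by linear maps $\phi \colon U^\perp \to U$ via the graph construction $U' = \{v + \phi(v) : v \in U^\perp\}$; there are $q^{ee'}$ such complements. Because $U \oPerp U^\perp$, the restriction of the quadratic form satisfies $Q(v + \phi(v)) = Q_{U^\perp}(v) + Q_U(\phi(v))$, so one reduces to counting the $\phi$'s for which this twisted form is non-degenerate of type $\sigma'$. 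Dividing by the total number of $U'$ of type $\sigma'$, which is furnished by Proposition~\ref{prop:pA}(c) applied to $W$, then yields the desired proportion as a rational function in $q$.

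The main obstacle in the intrinsic approach would be classifying the type of the induced form on $U'$ as $\phi$ varies: the type of a non-degenerate orthogonal space is a delicate invariant (the Arf invariant in characteristic~$2$, the discriminant otherwise), and sifting $\phi$'s by type requires a careful case analysis, typically via Gauss-sum or character estimates, and a separate argument in even characteristic. This is precisely the delicate analysis carried out in \cite{Forms}, where the worst case $q = 3$ produces the constant $c = 43/16$; establishing that the bound is uniform in $e$ and $e'$ requires tight control over the error terms of order $q^{-2}$ and higher of the type considered in Lemmas~\ref{lem:omMinus}--\ref{lem:omegabound} above.
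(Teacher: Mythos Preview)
Your proposal is correct and matches the paper's approach: Theorem~\ref{T:O} is not proved in the paper but is simply a paraphrase of \cite{Forms}*{Theorem~1.1}, and you correctly recognize that the task reduces to checking that the hypotheses (orthogonal type, $e,e'$ even, $\dim W=e+e'$, $q\ge3$) fall within the scope of that cited theorem and reading off $c=43/16$ from its table. Your additional sketch of an intrinsic argument is not needed here, though your identification of the type-classification step as the delicate point is accurate.
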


Our goal in this subsection is to prove the following theorem.

\begin{theorem}\label{T:orthobound}
  Suppose that $e, e'$ are even positive integers and $d\ge e+e'$. Let
  $V$ be a non-degenerate orthogonal $d$-space of type $\eps$ over $\F_q$, where
  $\eps \in \{+,\circ,-\}.$ Let $\sigma,\sigma'\in\{-,+\}$ and let
  $\cU=\Binom{V}{e}^\eps_\sigma$, $\cU'=\Binom{V}{e'}^\eps_{\sigma'}$ be the set
  of non-degenerate subspaces of dimensions $e,e'$ of types $\sigma,\sigma'$,
  respectively. Then, if $q\ge3$, the proportion of pairs
  $(U,U')\in\cU\times\cU'$ for which $U\cap U'=0$ and $U+U'$ is non-degenerate
  satisfies
  \[
  \rho(\bO^\eps,V,\cU,\cU')\ge1-\frac{2.85}{q}=0.05\quad\textup{if $q=3$, and}
  \quad
  \rho(\bO^\eps,V,\cU,\cU')\ge1-\frac{25}{8q}\quad\textup{if $q\ge4$.}
  \]
\end{theorem}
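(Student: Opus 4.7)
The strategy is to apply Proposition~\ref{p:reduce}(b) and then bound each internal proportion using Theorem~\ref{T:O}. If $d = e+e'$, the sum over $\tau\in\{-,+\}$ collapses to the single term $\tau = \eps$ (and $\eps\in\{-,+\}$ necessarily), so Theorem~\ref{T:O} yields $\rho \ge 1 - 43/(16q)$, which implies both claimed bounds since $43/16 < 25/8$ and $1 - 43/48 > 0.05$. For $d > e+e'$, Proposition~\ref{p:reduce}(b) and the uniform bound $\rho(\bO^\tau, W_\tau, \cdots) \ge 1 - 43/(16q)$ of Theorem~\ref{T:O} give
\[
\rho(\bO^\eps, V, \cU, \cU') \ge \left(1 - \frac{43}{16q}\right)(R_+ + R_-),
\]
where $R_\tau$ is the ratio of subspace counts appearing in Proposition~\ref{p:reduce}(b). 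An elementary arithmetic check shows that $R_+ + R_- \ge 1 - \frac{7}{16q}$ would suffice: for $q \ge 4$ we obtain $(1 - \tfrac{43}{16q})(1 - \tfrac{7}{16q}) \ge 1 - \tfrac{50}{16q} = 1 - \tfrac{25}{8q}$, and for $q = 3$ we obtain $\tfrac{5}{48}\cdot\tfrac{41}{48} > 0.05$.

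To bound $R_+ + R_-$, substitute the formulas of Proposition~\ref{prop:pA}(c). Writing $D = \lfloor d/2\rfloor$ and $m = (e+e')/2$, the powers of $q$ cancel via $e(d-e) + e'(d-e') = (e+e')(d-e-e') + 2ee'$, and $R_\tau$ factors as $R_\tau = \Omega \cdot F_\tau$ with
\[
\Omega \coloneq \frac{\omega(m, q^2)\,\omega(D-\tfrac{e}{2}, q^2)\,\omega(D-\tfrac{e'}{2}, q^2)}{\omega(D-m, q^2)\,\omega(\tfrac{e}{2}, q^2)\,\omega(\tfrac{e'}{2}, q^2)\,\omega(D, q^2)}
\]
independent of $\tau$. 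Combining Lemma~\ref{lem:omegabound} (with $q^2$ in place of $q$) with $\binomq{m}{e/2}_{q^2} \ge 1$ from Lemma~\ref{lem:binombounds} yields $\Omega \ge 1 - \tfrac{1}{2q^2 \log q}$. The $\tau$-dependent $(1+\cdot)$-factor $F_\tau$ is summed in closed form via the identity
\[
\sum_{\tau = \pm 1} \frac{(1+\tau a)(1+\tau b)(1+\tau c)}{1 + \tau x} = \frac{2\bigl[(1+ab+ac+bc) - x(a+b+c+abc)\bigr]}{1 - x^2},
\]
applied with $a = \eps q^{-D+m}$, $b = \sigma q^{-m+e/2}$, $c = \sigma' q^{-m+e'/2}$ and $x = q^{-m}$. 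The relation $e/2 + e'/2 = m$ then gives $bc = \sigma\sigma' q^{-m}$ and $abc = \eps\sigma\sigma' q^{-D}$, producing an explicit rational function for $R_+ + R_-$ whose $q^{-1}$-coefficients cancel symmetrically, so that $R_+ + R_- = 1 + O(q^{-2})$.

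The main obstacle is converting this asymptotic description into the explicit inequality $R_+ + R_- \ge 1 - \tfrac{7}{16q}$ uniformly over all sign patterns $(\eps,\sigma,\sigma') \in \{-,\circ,+\}\times\{-,+\}\times\{-,+\}$ and the full range $d > e+e' \ge 4$, $q \ge 3$. The tightest cases occur when $D - m$ is minimal (especially $d = e+e'+1$ odd or $d = e+e'+2$ even) and at $q\in\{3,4\}$ where the $q^{-1}$ slack is narrowest; these cases can be confirmed by direct inspection of the explicit rational expression above, supplemented in the orthogonal $\eps = \circ$ case (where $D = m$ is possible) by the simplification that several $(1+\eps\cdot q^{-\cdot})$ factors reduce to~$1$. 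Tracking the signs so that the worst-case estimate stays within the required slack, while keeping the proof uniform, will constitute the bulk of the work.
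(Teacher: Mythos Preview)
Your approach is the same as the paper's: reduce to $d=e+e'$ via Theorem~\ref{T:O}, apply Proposition~\ref{p:reduce}(b), factor $R_\tau=\Omega\cdot F_\tau$, bound $\Omega$ via Lemmas~\ref{lem:binombounds} and~\ref{lem:omegabound}, and evaluate $F_++F_-$ with the identity of Lemma~\ref{lem:abcd}. The paper carries this out in Lemma~\ref{lem:kappabound}, which establishes the explicit lower bound
\[
K\coloneq F_++F_-\ \ge\ \frac{(1-q^{-3})(1-2q^{-2}-3q^{-3}-q^{-5})}{(1+q^{-2})^2},
\]
and then $K\ge 1-\tfrac{9}{2q^2}$; the final step combines $(1-\tfrac{43}{16q})(1-\tfrac{1}{2q^2\log q})K$ and checks $q\in\{3,4,5,7,8,9\}$ against the sharper rational bound, and $q\ge 11$ against $1-\tfrac{9}{2q^2}$.

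Your proposal stops precisely where this work begins. You write that the explicit inequality ``will constitute the bulk of the work'' and defer it to ``direct inspection''; that is exactly the content of Lemma~\ref{lem:kappabound}, and without it there is no proof. Moreover, your chosen target $R_++R_-\ge 1-\tfrac{7}{16q}$ is not what the paper proves and is not obviously obtainable by the paper's term-by-term sign estimates: at $q=3$ the paper's bound on $\Omega\cdot K$ is roughly $0.949\times 0.517\approx 0.49$, far below $1-\tfrac{7}{48}\approx 0.854$. So either you must carry out a strictly sharper analysis of $F_++F_-$ than Lemma~\ref{lem:kappabound} (you give no indication how), or you should abandon the target $1-\tfrac{7}{16q}$ and instead prove a bound of the form $1-O(q^{-2})$ as the paper does, then handle small $q$ by direct evaluation of the explicit rational lower bound. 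Also note that in the boundary case $d=e+e'+1$ (so $D=m$ and necessarily $\eps=\circ$), the paper shows $K=1$ exactly; this is not merely a simplification but a separate computation you would need to supply.
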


\begin{remark}\label{R:O}
As discussed in Section~\ref{subsec:stingelts}, our algorithmic application finds group elements $g,g'$ in the isometry
group of $V$ where  $g$ acts irreducibly on~$U\coloneq \im(g-1)$ and $g'$
acts irreducibly on $U'\coloneq \im(g'-1)$. 
In the orthogonal case $e=\dim(U)\ge2$ implies that $e$
must be even and the type $\sigma$ of $U$ is \emph{minus},
see Lemma~\ref{l:prop1}(b). Therefore in the algorithmic application
the random elements $g,g'$ correspond to non-degenerate subspaces $U$ 
and $U'$ of even dimension and
minus type. By contrast, in Theorem~\ref{t:main1} (and Theorem~\ref{T:orthobound}) 
the non-degenerate
subspaces $U,U'$ may have types $\sigma,\sigma'\in\{-,+\}$ as they
need not arise from group elements $g,g'\in\GO(V)$ as in our algorithmic application.\hfill$\diamond$
\end{remark}

Let $V=(\F_q)^d$ be a non-degenerate orthogonal $d$-space of type $\eps$. As
$e,e'$ are even, so is $e+e'$ and for these dimensions
the subspace type equals the intrinsic type,
so we can unambiguously abbreviate to type.
Let $\cU=\Binom{V}{e}^\eps_\sigma$ denote the set of non-degenerate $e$-subspaces
of $V$ of type $\sigma$, and similarly $\cU'=\Binom{V}{e'}^\eps_{\sigma'}$. 
We note first that Theorem~\ref{T:orthobound} follows   immediately
from Theorem~\ref{T:O} if $d=e+e'$, since in this case Theorem~\ref{T:O}
implies that, for all $q\ge3$, 
\[
  \rho(\bO^\eps,V,\cU,\cU') \ge 1 -\frac{43}{16q}
  > \max\left\{1- \frac{2.85}{q}, 1-\frac{25}{8q}\right\}.
\]

Thus we may, and shall, assume that $d\ge e+e'+1$, so that
$\Binom{V}{e+e'}^\eps_\tau$ is non-empty for each $\tau\in\{-,+\}$.
Fix $W_\tau\in\Binom{V}{e+e'}^\eps_\tau$ for $\tau\in\{-,+\}$. If $q\ge3$, then
it follows from Theorem~\ref{T:O} and Proposition~\ref{p:reduce}(b) that
\[
\rho(\bO^\eps,V,\cU,\cU') \ge\left(1-\frac{43}{16q}\right)
\sum_{\tau\in\{+,-\}}
\frac{|{\Binom{V}{e+e'}}^\eps_\tau|\cdot
  |{\Binom{W_\tau}{e}}^\tau_{\sigma}|\cdot
  |{\Binom{W_\tau}{e'}}^\tau_{\sigma'}|}
     {|{\Binom{V}{e}}^\eps_{\sigma}|\cdot |{\Binom{V}{e'}}^\eps_{\sigma'}|}.
\]

Arguing as in the proof of Lemma~\ref{lem:symplecticbound} we
temporarily set $\omega(n)=\omega(\lfloor n/2\rfloor,q^2)$.
By Proposition~\ref{prop:pA}(c), for $e$ even,
the number of $e$-subspaces of $V$ of type $\sigma$ equals
\begin{align}
  \left|\Binom{V}{e}^\eps_\sigma\right|
  &=q^{e(d-e)}\kappa( e,d-e, \eps, \sigma)
  \frac{\omega(d)}{\omega(e)\omega(d-e)}
       \quad\textup{where} \label{E:Nsub}\\
    \kappa( e, d-e, \eps, \sigma) &=
  \frac{(1+\sigma q^{-e/2})(1+\eps \sigma q^{-\lfloor d/2\rfloor+e/2})}{2(1+\eps  q^{-\lfloor d/2\rfloor})}.\label{E:kappa}
\end{align}

Using the formula~\eqref{E:Nsub} for the other factors in the product
above, we obtain a complicated expression for
$\frac{|{\Binom{V}{e+e'}}^\eps_\tau|\cdot
  |{\Binom{W_\tau}{e}}^\tau_\sigma|\cdot |{\Binom{W_\tau}{e'}}^\tau_{\sigma'}|}
    {|{\Binom{V}{e}}^\eps_\sigma|\cdot
      |{\Binom{V}{e'}}^\eps_{\sigma'}|}$ which admits two simplifications.
  First, the powers of $q$ all cancel, and second the following factor which
  is independent of $\tau\in\{+,-\}$ can be estimated:
  \[
  \frac{\omega(e+e')}{\omega(e)\omega(e')} \ge1\quad\mbox{and}\quad 
  \frac{\omega(d-e) \omega(d-e')}{\omega(d-e-e')\omega(d)}
    > 1- \frac{1}{2q^2 \log(q)}
  \]
  where the first inequality follows from Lemma~\ref{lem:binombounds}, 
  and the second follows from applying Lemma~\ref{lem:omegabound} with $q$ replaced by $q^2$ (as in the last line of the proof of Lemma~\ref{lem:symplecticbound}).   
  Using  the above lower bound gives
  the following (strict) lower bound for $\rho(\bO^\eps,V,\cU,\cU')$:
  \[
\left(1-\frac{43}{16q}\right)\left(1- \frac{1}{2q^2 \log(q)}\right)
\sum_{\tau\in\{+,-\}}\frac{
    \kappa(e+e', d-e-e', \eps, \tau)
    \kappa( e, e', \tau, \sigma)
    \kappa( e', e, \tau, \sigma') 
  }{\kappa( e, d-e, \eps, \sigma)
    \kappa( e', d-e', \eps, \sigma')}.
\]
The following technical lemma helps us bound the above sum.

\begin{lemma}\label{lem:abcd}
  Let $\alpha,\beta,\gamma,\delta\in \R$ such that
  $\delta \neq \pm 1.$ Then
  \[
  \frac12\sum_{\tau\in\{-,+\}}
    \frac{(1-\tau \alpha)(1- \tau \beta)(1-\tau \gamma)}{1+\tau \delta}
  = \frac{ 1 + \alpha\beta + \alpha\gamma + \alpha\delta + \beta\gamma +\beta\delta +\gamma\delta + \alpha\beta\gamma\delta}{1-\delta^2}.
  \]
\end{lemma}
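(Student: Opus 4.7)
The identity is purely algebraic, so my plan is a direct expansion after clearing denominators. The sum in question expands to
\[
\frac{1}{2}\left[\frac{(1+\alpha)(1+\beta)(1+\gamma)}{1-\delta}
+\frac{(1-\alpha)(1-\beta)(1-\gamma)}{1+\delta}\right],
\]
and combining the two fractions over the common denominator $(1-\delta)(1+\delta)=1-\delta^2$ produces
\[
\frac{(1+\alpha)(1+\beta)(1+\gamma)(1+\delta)+(1-\alpha)(1-\beta)(1-\gamma)(1-\delta)}{2(1-\delta^2)}.
\]

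The key observation, which avoids a brute-force expansion of sixteen terms twice over, is to index the expansion by subsets of $\{\alpha,\beta,\gamma,\delta\}$: for any $x_1,x_2,x_3,x_4$, one has
\[
\prod_{i=1}^4(1+x_i)=\sum_{S\subseteq\{1,2,3,4\}}\prod_{i\in S}x_i
\quad\text{and}\quad
\prod_{i=1}^4(1-x_i)=\sum_{S\subseteq\{1,2,3,4\}}(-1)^{|S|}\prod_{i\in S}x_i.
\]
Adding these two identities, the terms with $|S|$ odd cancel and those with $|S|$ even double, giving
\[
\prod_{i=1}^4(1+x_i)+\prod_{i=1}^4(1-x_i)
=2\sum_{\substack{S\subseteq\{1,2,3,4\}\\ |S|\text{ even}}}\prod_{i\in S}x_i.
\]

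Applied with $(x_1,x_2,x_3,x_4)=(\alpha,\beta,\gamma,\delta)$, the even-cardinality subsets are $\emptyset$, the six pairs, and $\{1,2,3,4\}$, so the numerator equals
\[
2\bigl(1+\alpha\beta+\alpha\gamma+\alpha\delta+\beta\gamma+\beta\delta+\gamma\delta+\alpha\beta\gamma\delta\bigr).
\]
Dividing by $2(1-\delta^2)$ yields exactly the claimed right-hand side. There is no real obstacle here, only bookkeeping; the subset-indexed expansion is simply a clean way to see why the odd-degree terms (such as $\alpha$, $\alpha\beta\gamma$, $\alpha\delta$ would have been — but note $\alpha\delta$ is degree $2$, so it survives) disappear while even-degree ones double.
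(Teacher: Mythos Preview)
Your proof is correct and follows essentially the same route as the paper: combine the two fractions over the common denominator $2(1-\delta^2)$ and observe that the odd-degree monomials in $\alpha,\beta,\gamma,\delta$ cancel in the numerator while the even-degree ones double. Your subset-indexed expansion is just a slightly more explicit justification of the cancellation the paper summarises as ``cancelling `odd' terms''; the only blemish is the confusing parenthetical at the end (listing $\alpha\delta$ among odd terms before correcting yourself), which you should simply delete.
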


\begin{proof}
  Let $\Lambda$ be the stated sum. Adding fractions and cancelling ``odd''
  terms gives
  \begin{align*}
    \Lambda&=\frac{(1+\alpha)(1+\beta)(1+\gamma)(1+\delta)+(1-\alpha)(1-\beta)(1-\gamma)(1-\delta)}{2(1-\delta)(1+\delta)}\\
    &=\frac{ 1 + \alpha\beta + \alpha\gamma + \alpha\delta + \beta\gamma +\beta\delta +\gamma\delta + \alpha\beta\gamma\delta}{1-\delta^2}.\qedhere
  \end{align*}
\end{proof}

\begin{lemma}\label{lem:kappabound}
  Suppose that $d \ge e+e'+1$ where $e,e'$ are both even and $d$ may be odd.
  Suppose further that the types
  $\sigma,\sigma',\eps$ satisfy $\sigma,\sigma'\in \{-,+\}$ and
  $\eps \in \{-,\circ,+\}$. Then
  \[
    K\coloneq\sum_{\tau\in\{-,+\}} \frac{
    \kappa(e+e', d-e-e', \eps, \tau)
    \kappa( e, e', \tau,\sigma)
    \kappa( e', e, \tau, \sigma') 
  }{\kappa( e, d-e, \eps, \sigma)
    \kappa( e', d-e', \eps, \sigma')}
  \]
  satisfies
  $K\ge\frac{(1 - q^{-3})(1 -2 q^{-2}-3q^{-3}-q^{-5})}{(1 + q^{-2})^2}$ and hence
  $K\ge1-\frac{9}{2q^{2}}$ for $q\ge3$.
\end{lemma}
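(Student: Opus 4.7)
The plan is to rewrite each $\kappa$-factor in terms of three auxiliary variables, apply Lemma~\ref{lem:abcd} to evaluate the sum over $\tau$ in closed form, and then extract the required lower bound from a parity observation on the signs in the resulting numerator.

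First I introduce the abbreviations $a \coloneq q^{-e/2}$, $b \coloneq q^{-e'/2}$ and $c \coloneq q^{-\lfloor d/2\rfloor + (e+e')/2}$, so that $abc = q^{-\lfloor d/2\rfloor}$ and $ab = q^{-(e+e')/2}$. Since $e,e' \ge 2$ we have $a,b \le q^{-1}$; moreover, if $d$ is even then the hypothesis $d \ge e+e'+1$ forces $d \ge e+e'+2$ and hence $c \le q^{-1}$, while if $d$ is odd then $\eps = \circ$, which corresponds to setting $\eps = 0$ in~\eqref{E:kappa}. After substitution each $\kappa$-factor becomes a rational function in $a,b,c$, and after cancelling $(1+\sigma a)$, $(1+\sigma' b)$ and one copy of $(1+\tau ab)$ between the numerator and denominator of each summand, the $\tau$-term of $K$ simplifies to
\[
\frac{(1+\eps abc)(1+\eps\tau c)(1+\tau\sigma b)(1+\tau\sigma' a)}{2(1+\eps\sigma bc)(1+\eps\sigma' ac)(1+\tau ab)}.
\]

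Next, applying Lemma~\ref{lem:abcd} with $(\alpha,\beta,\gamma,\delta) = (-\eps c, -\sigma b, -\sigma' a, ab)$ gives
\[
K \;=\; \frac{(1+\eps abc)\,M}{(1+\eps\sigma bc)(1+\eps\sigma' ac)(1 - a^2 b^2)},
\]
where $M = 1 + \sigma\sigma' ab + \eps\sigma bc + \eps\sigma' ac - \eps abc - \sigma ab^2 - \sigma' a^2 b - \eps\sigma\sigma' a^2 b^2 c$. The key observation for bounding $M$ from below is a parity constraint on the three \emph{level-2} terms $\sigma\sigma' ab$, $\eps\sigma bc$ and $\eps\sigma' ac$: the product of their sign coefficients is $(\sigma\sigma')(\eps\sigma)(\eps\sigma') = \eps^2 \sigma^2 \sigma'^2 = 1$, so an even number of them are negative, and at most two are therefore negative, contributing jointly at least $-2q^{-2}$. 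The three \emph{level-3} terms each contribute at least $-q^{-3}$, and the single \emph{level-5} term at least $-q^{-5}$, whence $M \ge 1 - 2q^{-2} - 3q^{-3} - q^{-5}$, which is strictly positive for all $q \ge 2$. For the remaining factors one has $1 + \eps abc \ge 1 - q^{-3}$, $(1+\eps\sigma bc)(1+\eps\sigma' ac) \le (1+q^{-2})^2$, and $1 - a^2 b^2 \le 1$; chaining these inequalities proves the first claimed bound.

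The second bound $K \ge 1 - \frac{9}{2q^2}$ for $q \ge 3$ follows by cross-multiplying the first bound against $(1 + q^{-2})^2 > 0$: a direct expansion yields
\[
(1-q^{-3})(1 - 2q^{-2} - 3q^{-3} - q^{-5}) - \left(1 - \tfrac{9}{2q^2}\right)(1+q^{-2})^2 \;=\; \tfrac{(q-4)^2}{2q^4} + q^{-5} + \tfrac{15}{2}q^{-6} + q^{-8},
\]
which is manifestly non-negative. The main obstacle is spotting the parity observation: without noting that the three level-2 sign coefficients multiply to $+1$, one obtains only $M \ge 1 - 3q^{-2} - \cdots$, and the resulting constants in Theorem~\ref{T:orthobound} would exceed what the algorithmic application can tolerate.
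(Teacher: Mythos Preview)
Your proof is correct and follows essentially the same route as the paper's: the same cancellation of $\kappa$-factors, the same application of Lemma~\ref{lem:abcd}, and the same parity observation that $\eps\sigma,\eps\sigma',\sigma\sigma'$ cannot all be $-1$. One small caveat: your statement that $(\sigma\sigma')(\eps\sigma)(\eps\sigma')=\eps^2\sigma^2\sigma'^2=1$ tacitly assumes $\eps\ne0$, whereas you have just noted that $\eps=0$ when $d$ is odd; of course in that case two of the three level-$2$ terms vanish outright, so the conclusion ``at most two are negative'' is immediate, but you should say so. Your unified treatment of the odd-$d$ case via $\eps=0$ (rather than the paper's separate computation that $K=1$ when $d=e+e'+1$) and your explicit algebraic identity for the second bound are both tidier than the paper's versions.
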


\begin{proof}
Let $K$ be as in the statement.
Using the definition~\eqref{E:kappa} of $\kappa$ and
cancelling (8 of the 15 factors) gives
\begin{align*}
 K& =
  \frac{1}{2}
  \sum_{\tau\in\{-,+\}}
  \frac{
    (1 + \eps \tau q^{-\lfloor \frac{d}{2}\rfloor+\frac{e+e'}{2}})
    (1 + \tau\sigma q^{-\frac{e'}{2}})
    (1 + \tau \sigma' q^{-\frac{e}{2}})      (1 + \eps q^{-\lfloor \frac{d}{2}\rfloor})}
    {(1 + \tau q^{-\frac{e+e'}{2}}) (1+\eps \sigma q^{-\lfloor \frac{d}{2}\rfloor+\frac e2})
         (1 + \eps\sigma' q^{-\lfloor\frac{d}{2}\rfloor+\frac{e'}{2}})}\\
 & =
       \frac{1 + \eps q^{-\lfloor \frac{d}{2}\rfloor}}
            {(1 + \eps\sigma q^{-\lfloor\frac{d}{2}\rfloor+\frac e2})
              (1 + \eps\sigma' q^{-\lfloor\frac{d}{2}\rfloor+\frac {e'}{2}})}
            \cdot \Lambda,
\end{align*}
where
\begin{equation}\label{E:Lambda}
    \Lambda = \frac{1}{2} \sum_{\tau\in\{-,+\}}
  \frac{(1 + \eps \tau q^{-\lfloor \frac{d}{2}\rfloor+\frac{e+e'}{2}})
        (1 + \tau\sigma q^{-\frac{e'}{2}})
        (1 + \tau \sigma' q^{-\frac{e}{2}})}
    {1 + \tau q^{-\frac{e+e'}{2}}}.
\end{equation}
Suppose that $d\ge e+e'+2$.
This implies $\lfloor\frac{d}{2}\rfloor>\frac{e+e'}{2}\ge2$,
whence $-\lfloor\frac{d}{2}\rfloor+\frac{e}{2}<-\frac{e'}{2}\le-1$,
and so $-\lfloor\frac{d}{2}\rfloor+\frac{e}{2}\le-2$;
and similarly $-\lfloor\frac{d}{2}\rfloor+\frac{e'}{2}\le-2$.
Using Lemma~\ref{lem:abcd} gives
\def\kp{{\kern-0.05pt+\kern-0.05pt}}
\def\km{{\kern-0.05pt-\kern-0.05pt}}
\begin{align*}
\Lambda &= 
  \frac{
    1 \kp \eps \sigma q^{-\lfloor \frac{d}{2}\rfloor+\frac{e}{2}} \kp \eps \sigma' q^{-\lfloor \frac{d}{2}\rfloor+\frac{e'}{2}}
    \km \eps q^{-\lfloor \frac{d}{2}\rfloor}
    \kp \sigma \sigma' q^{-\frac{e+e'}{2}}
    \km \sigma  q^{-\frac{e}{2}-e'}
    \km \sigma'  q^{-\frac{e'}{2}-e}
    \km \eps \sigma \sigma' q^{-\lfloor \frac{d}{2}\rfloor - \frac{e+e'}{2}}}
       { 1 -  q^{-e-e'}}\\
      &\ge 1-q^{-2}-q^{-2} -q^{-3} -q^{-2} -q^{-3} -q^{-3} -q^{-5}\\
      & = 1 -3 q^{-2} -3q^{-3} -q^{-5}.
\end{align*}
Upon closer inspection, $\eps\sigma$, $\eps\sigma'$ and
$\sigma\sigma'$ cannot all be $-1$, so that the sharper bound
$\Lambda\ge1 -2 q^{-2} -3q^{-3} -q^{-5}$ holds.

Inserting this lower bound for $\Lambda$ into the expression for $K$
we obtain
\begin{align*}
  K& \ge \frac{1 + \eps q^{-\lfloor \frac{d}{2}\rfloor}}
           {(1 + \eps\sigma q^{-\lfloor\frac{d}{2}\rfloor+\frac{e}{2}})
            (1 + \eps\sigma' q^{-\lfloor\frac{d}{2}\rfloor+\frac{e'}{2}})}\cdot
       \frac{1 -2 q^{-2} -3q^{-3} -q^{-5}}{1}\\
  &\ge \frac{(1 - q^{-3})(1 -2 q^{-2}-3q^{-3}-q^{-5})}{(1 + q^{-2})^2}.
\end{align*}
This proves the first bound when $d\ge e+e'+2$.
To prove that this lower bound is at least $1-\frac{9}{2q^2}$
for all $q\ge3$, we rearrange this inequality and show that it is equivalent to
proving that the following is true for all $q\ge3$:
\begin{align*}
  \frac{9}{2}&\ge\frac{q^2((1 + q^{-2})^2-(1 - q^{-3})(1 -2 q^{-2}-3q^{-3}-q^{-5}))}
          {(1 + q^{-2})^2}\\
   &=\frac{4+4q^{-1}+q^{-2}-q^{-3}-3q^{-4}-q^{-6}}{(1 + q^{-2})^2}.
\end{align*}
As $q\to\infty$, the the right side  $\to 4$. Indeed the values of
the right side lie between $4$ and $\frac{9}2$ for all $q\ge3$. This proves
the second bound  when $d\ge e+e'+2$.

Now we consider the  remaining case where $d=e+e'+1$. Here $d$ is odd so $\lfloor\frac{d}{2}\rfloor=\frac{e+e'}{2}$.
If $\tau = -\eps$, then one summand in the expression for $\Lambda$
in~\eqref{E:Lambda} is zero because  the factor
$1 + \eps \tau q^{-\lfloor\frac{d}{2}\rfloor+\frac{e+e'}{2}}$ is zero.
Hence 
\begin{align*}
  K
   & =
       \frac{1 + \eps q^{-\lfloor \frac{d}{2}\rfloor}}
            {2(1 + \eps\sigma q^{-\lfloor\frac{d}{2}\rfloor+\frac{e}{2}})(1 + \eps\sigma' q^{-\lfloor\frac{d}{2}\rfloor+\frac{e'}{2}})}\cdot
  \frac{
    (1 +  q^{-\lfloor \frac{d}{2}\rfloor+\frac{e+e'}{2}})
    (1 + \eps\sigma q^{-\frac{e'}{2}})
    (1 + \eps \sigma' q^{-\frac{e}{2}})}
       {    (1 + \eps q^{-\frac{e+e'}{2}})}\\
&  =
       \frac{1 + \eps q^{-\lfloor \frac{e+e'}{2}\rfloor}}
            {2(1 + \eps\sigma q^{-\frac{e'}{2}})(1 + \eps\sigma' q^{-\frac{e}{2}})}
            \cdot
  \frac{(1+q^0)(1 + \eps\sigma q^{-\frac{e'}{2}})(1 + \eps \sigma' q^{-\frac{e}{2}})}
    {1 + \eps q^{-\lfloor \frac{e+e'}{2}\rfloor}}.
\end{align*}
Hence $K=1$ and both of the stated bounds also hold in this final case.
\end{proof}

\begin{proof}[Proof of Theorem~$\ref{T:orthobound}$]
	As discussed after Remark~\ref{R:O},  Theorem~$\ref{T:orthobound}$ follows from Theorem~\ref{T:O} if $d=e+e'$, so assume that $d\ge e+e'+1$.
  Let $K$ be as in Lemma~\ref{lem:kappabound}. It follows from the display
  before Lemma~\ref{lem:abcd}  that
  \begin{align*}
    \rho(\bO^\eps,V,\cU,\cU') 
         &>\left(1-\frac{43}{16q}\right)
         \left(1-\frac{1}{2q^2\log(q)}\right)K.
  \end{align*}
  Setting $q=3$ in the first lower bound in Lemma~\ref{lem:kappabound} shows that
  $\rho(\bO^\eps,V,\cU,\cU')>0.05$. Thus setting $c=2.85$, we have
  $\rho>0.05=1-\frac{c}{3}$.
  Similarly, if $q\in\{4,5,7,8,9\}$ then again using the first lower bound
  on $K$, we find that $\rho(\bO^\eps,V,\cU,\cU')>1-\frac{3.125}{q}$ holds.
  For $q\ge11$ we use
  the second lower bound $K\ge 1-\frac{9}{2q^2}$ in Lemma~\ref{lem:kappabound}.
  This shows
  \[
  \rho(\bO^\eps,V,\cU,\cU')>\left(1-\frac{43}{16q}\right)\left(1-\frac{1}{2q^2\log(q)}\right)\left(1-\frac{9}{2q^2}\right)
  =\left(1-\frac{\alpha}{q}\right)\left(1-\frac{\beta}{q}\right)\left(1-\frac{\gamma}{q}\right)
  \]
  where $\alpha=\frac{43}{16}$, $\beta=\frac{1}{2q\log(q)}$ and
  $\gamma=\frac{9}{2q}$. The inequality
  $(1-\frac{\alpha}{q})(1-\frac{\beta}{q})(1-\frac{\gamma}{q})\ge1-\frac{3.125}{q}$
  is equivalent to
  \[
  3.125\ge q\left(1-\left(1-\frac{\alpha}{q}\right)\left(1-\frac{\beta}{q}\right)\left(1-\frac{\gamma}{q}\right)
  \right)
  =\alpha+\beta+\gamma-\frac{\alpha\beta+\beta\gamma+\gamma\alpha}{q}+
  \frac{\alpha\beta\gamma}{q^2}.
  \]
  The stronger condition
  $3.125\ge\alpha+\beta+\gamma+ \frac{\alpha\beta\gamma}{q^2}$ does indeed
  hold for all $q\ge11$.
  Hence $\rho(\bO^\eps,V,\cU,\cU')\ge 1-\frac{3.125}{q}$ holds for all $q\ge4$.
\end{proof}

\end{document}